\newtheorem{theorem}{Theorem}[section]
\newtheorem{proposition}[theorem]{Proposition}
\newtheorem{claim}[theorem]{Claim}
\newtheorem{definition}[theorem]{Definition}
\newtheorem{corollary}[theorem]{Corollary} 
\newtheorem{lemma}[theorem]{Lemma} 
\theoremstyle{definition} 
\newtheorem{example}[theorem]{Example}
\numberwithin{equation}{section} 
\newcommand\bm{\begin{pmatrix}} 
\renewcommand\em{\end{pmatrix}}
\renewcommand\|{\Big|}
\renewcommand\({\Big(} 
\renewcommand\){\Big)} 
\renewcommand\[{\Big[} 
\renewcommand\]{\Big]}
\newcommand\er{\eqref} 
\newcommand\be[1]{\begin{equation}\label{#1}}
\newcommand\ee{\end{equation}} 
\newcommand\I{\int\limits}
\renewcommand\S{\sum\limits}
\renewcommand\P{\prod\limits}
\newcommand\U{\bigcup\limits}
\newcommand\bb[4]{\begin{pmatrix}{#1}&{#2}\\{#3}&{#4}\end{pmatrix}}
\renewcommand\c[1]{{\check{#1}}} 
\renewcommand\d[1]{{\dot{#1}}}
\newcommand\h[1]{{\hat{#1}}} 
\newcommand\y[1]{{\undertilde{#1}}} 
\renewcommand\t[1]{{\tilde{#1}}} 
\newcommand\w[1]{{\widetilde{#1}}} 
\renewcommand\o[1]{{\overline{#1}}}
\newcommand\F[1]{{\sqrt{#1}}} 
\newcommand\f[2]{{\frac{#1}{#2}}}
\renewcommand\b[2]{{\binom{#1}{#2}}}
\renewcommand\1{\prime}
\newcommand\dl{\partial}
\newcommand\el{\ell}
\newcommand\la{\alpha}
\newcommand\lb{\beta}
\renewcommand\lg{\gamma}
\newcommand\lG{\Gamma}
\newcommand\ld{\delta}
\newcommand\lD{\Delta}
\newcommand\Le{\epsilon}
\newcommand\lh{\eta}
\newcommand\li{\iota}
\newcommand\lk{\kappa}
\renewcommand\ll{\lambda}
\newcommand\lL{\Lambda}
\newcommand\lm{\mu}
\renewcommand\ln{\nu}
\newcommand\lf{\phi}
\newcommand\lF{\Phi}
\renewcommand\lq{\psi}
\newcommand\lQ{\Psi}
\renewcommand\#{\sharp}
\newcommand\lp{\pi}
\newcommand\ls{\sigma}
\newcommand\lo{\omega}
\newcommand\lx{\xi}
\newcommand\lz{\zeta}
\newcommand\iu{\cup}
\newcommand\ic{\subset}
\newcommand\xx{\times}
\newcommand\xt{\otimes}
\newcommand\oo{\infty}
\newcommand\oc{\circ}
\newcommand\AL{\mathcal A} 
\newcommand\CL{\mathcal C} 
\newcommand\EL{\mathcal E}
\newcommand\HL{\mathcal H} 
\newcommand\IL{\mathcal I} 
\newcommand\JL{\mathcal J} 
\newcommand\KL{\mathcal K} 
\newcommand\LL{\mathcal L}
\newcommand\OL{\mathcal O} 
\newcommand\PL{\mathcal P}
\newcommand\VL{\mathcal V}
\newcommand\Bl{{\mathbf B}} 
\newcommand\Cl{{\mathbf C}} 
\newcommand\Dl{{\mathbf D}}
\newcommand\Nl{{\mathbf N}}
\newcommand\Rl{{\mathbf R}}
\newcommand\gL{\frak g}
\newcommand\kL{\frak k}
\newcommand\pL{\frak p}
\newcommand\tL{\frak t}
\newcommand\cl{{\boldsymbol{c}}}
\newcommand\gl{{\boldsymbol{g}}}
\subjclass[2010]{47A13, 47B32, 22E46}
\keywords{The Bergman operator, the Jordan triple product, co-variant kernel, homogeneous operator, holomorphic Hermitian homogeneous vector bundles, intertwining operator, irreducible action, Wallach set}
\thanks{The work of the first author is supported through the IRSES Network, J C Bose National Fellowship of the DST and the UGC Centre for Advanced Studies.}
\address[G. Misra]{Department of Mathematics\\Indian Institutte of Science\\Bangalore 560012}
\address[Upmeier]{Department of Mathematics\\
University of Marburg\\
35032 Marburg}
\email[G. Misra]{gm@math.iisc.ernet.in}
\email[H. Upmeier]{upmeier@mathematik.uni-marburg.de}
\begin{document}
\title{Homogeneous Vector Bundles and intertwining Operators for Symmetric Domains}
\author{Gadadhar Misra and Harald Upmeier} 

\begin{abstract} The main features of homogeneous Cowen-Douglas operators, well-known for the unit disk, are generalized to the setting of hermitian bounded symmetric domains of arbitrary rank.
\end{abstract}

\maketitle
\setcounter{section}{-1}

%\title{}
%\subjclass[2010]{47A13,47B32,22E46}
%\keywords{The Bergman operator, the Jordan triple product, co-variant kernel, homogeneous operator, holomorphic Hermitian homogeneous vector bundles, intertwining operator, irreducible action, Wallach set}
%\thanks{The work of the first author is supported through the IRSES Network, J C Bose National Fellowship of the DST and the UGC Centre for Advanced Studies.}

%abstract: The main features of homogeneous Cowen-Douglas operators, well-known for the unit disk, are generalized to the setting of hermitian bounded symmetric domains of arbitrary rank.

\section{Introduction}
The well-known Cowen-Douglas theory of operators (resp. operator tuples) possessing an open set of (joint) eigenvalues, has been
implemented by A. Koranyi and G. Misra \cite{KM1,KM2} for the so-called homogeneous operators on the unit disk, leading to a full classification up to unitary equivalence. The unit disk is the basic example of a bounded hermitian symmetric domain. In this paper the major results for homogeneous operator tuples are generalized to all (irreducible) bounded symmetric domains $D=G/K$ of arbitrary rank.
More precisely, for the bi-holomorphic automorphism group $G$ of $D$ we realize some of the irreducible unitary (discrete series) representations on 'little' Hilbert spaces $H^2_\ln(D, P_\ll Z),$ for some $\ln >0$ and an integer partition $\ll$, consisting of holomorphic functions on the bounded symmetric domain $D$ and possessing a reproducing kernel. As our main construction, we then consider
a 'big' Hilbert space $H^2_{\ln,\cl}(D, P^nZ)$ with a unitary $G$-action, as a weighted direct sum with weights determined by a tuple $\cl$ of positive real numbers, which also possesses a covariant reproducing kernel. There is a natural action of the polynomial ring, or more generally, the $C^*$-algebra generated by the multiplication by polynomials and their adjoints, on both the little and the big Hilbert spaces. 

In this setting, our main results are (i) the 'Peter-Weyl' decomposition of the big Hilbert space into $G$-irreducible components described via intertwining differential operators, (ii) a criterion for boundedness of multiplication operators, and 
(iii) the proof of irreducibility of the associated homogeneous hermitian vector bundles under the multiplier action. Our results are new
even in the special case of the unit ball in $\Cl^d,$ where the general theory leads to explicit formulas for the intertwining operators and the classification of reproducing kernels.   

\section{Review of homogeneous Cowen-Douglas operators} 
To put our work into perspective, we first outline the relation to the standard approach to Cowen-Douglas theory. Consider an open subset $D\ic\Cl.$ In a very influential paper \cite{CD1}, Cowen and Douglas isolated a class of operators $B_k(D)$ of operators $T^*$ on a complex Hilbert space $\HL$ such that (i) for each $w\in D,$ the conjugate $\o w$ is an eigenvalue of constant multiplicity $k,$ (ii) the collection of all $k$-dimensional eigenspaces
$$\VL_w:=\ker(T^*-\o w)\ic\HL\qquad\forall\,w\in D$$
is total in $\HL,$ and (iii) given any $w_0\in D,$ there exists an anti-holomorphic choice of $k$ linearly independent eigenvectors 
\be{1}w\mapsto \KL_w^1,\ldots,\KL_w^k\ee
for $w$ in some neighborhood $U$ of $w_0.$ Putting 
$$\KL_w\lx=\S_{i=1}^k\lx_i\KL_w^i\qquad\forall\,\lx\in V:=\Cl^k$$ 
we obtain linear isomorphisms
\be{2}\KL_w:V\to\VL_w\qquad\forall\,w\in U\ee 
which depend anti-holomorphically on $w\in U.$ In the multi-variable case of domains $D\ic\Cl^d$, Curto and Salinas \cite{CS} introduced the class $B_k(D)$ of commuting $d$-tuples 
\be{3}(T_1^*,\ldots,T_d^*)\ee
of operators on $\HL,$ such that (i) for each $w\in D$ the adjoint tuple $\o w\in\Cl^d$ is a joint eigenvalue of constant multiplicity 
$k$ and (ii) the collection of all joint eigenspaces
$$\VL_w:=\bigcap_{j=1}^d\ker (T_j^*-\o w_j)\ic\HL\qquad\forall\,w\in D$$
is total in $\HL.$ Curto and Salinas list conditions on a commuting $d$-tuple which implies (iii) the existence of an anti-holomorphic choice \er{1} of $k$ linearly independent joint eigenvectors in some neighborhood $U$ of an arbitrary $w_0\in D.$ Under these conditions, the {\bf eigenbundle} 
$$\VL:=\U_{w\in D}\VL_w\ic D\xx\HL$$ 
of the operator tuple \er{3} becomes a {\bf hermitian holomorphic vector bundle} over $D.$ Thus every local holomorphic section $\ls:U\to\VL$ has a trivialization 
\be{4}\t\ls_z:=\KL_z^*\ls_z\in V\qquad\forall\,z\in U,\ee
which is a holomorphic map $\t\ls:U\to V.$ On the other hand, every $\lF\in\HL$ induces a global section
\be{5}z\mapsto\y\lF_z:=\li_z^*\lF\in\VL_z\qquad\forall\,z\in D\ee
of $\VL,$ where $\li_z:\VL_z\to\HL$ is the inclusion map. The local trivialization
\be{6}\t{\y\lF}_z=\KL_z^*\y\lF_z=\KL_z^*\li_z^*\lF=(\li_z\KL_z)^*\lF=\KL_z^*\lF\qquad\forall\,z\in U\ee
shows that these sections are holomorphic. Since the realization \er{5} is injective due to condition (ii), we may replace $\HL$ by a Hilbert space of holomorphic sections of $\VL.$ Then the $d$-tuple $T=(T_1,\ldots,T_d)$ is conjugate to the commuting tuple $z_j^\oc$ of multiplication operators by the coordinate functions $z_j$: In fact, $T_j^*\li_z=\o z_j\li_z$ implies
$$\y{T_j\lF}_z=\li_z^*T_j\lF=(T_j^*\li_z)^*\lF=(\o z_j\li_z)^*\lF=z_j(\li_z^*\lF)=z_j\cdot\y\lF_z.$$
More generally, denoting by $f^\oc$ the (module) action by multiplication operators, we have
\be{7}\y{f(T)\lF}_z=f(z)\y\lF_z=(f^\oc\lF)_z\ee
for all polynomials $f\in\Cl[z_1,\ldots,z_d].$ The papers \cite{CD1,CD2,CS} provide a 1-1 correspondence between unitary equivalence classes of operators in $B_k(D)$ and the corresponding equivalence classes of holomorphic Hermitian eigenbundles, with equivalence determined modulo a holomorphic change of frames.\\
In our setting of symmetric spaces of non-compact type, the eigenbundle has a {\bf global} holomorphic trivialization, so that \er{2}, \er{4} and \er{6} hold for $U=D.$ Thus we may identify $\HL$ with a Hilbert space of holomorphic maps $\lF:D\to V$ by identifying 
$\lF\in\HL$ with $\t{\y\lF}:D\to V.$ Define
$$\KL_{z,w}:=\KL_z^*\KL_w\in End(V)\qquad\forall\,z,w\in D.$$
For every $\lh\in V,$ the element $\KL_w\lh\in\VL_w\ic\HL$ induces the holomorphic section
$$\y{\KL_w\lh}_z=\li_z^*\KL_w\lh$$
with trivialization
$$\w{\y{\KL_w\lh}}_z=\KL^*_z\y{\KL_w\lh}_z=\KL^*_z\li_z^*\KL_w\lh=\KL^*_z\KL_w\lh=\KL_{z,w}\lh.$$
Identifying $\lF=\t{\y\lF}$ and $(\KL_w\lh)_z=\w{\y{\KL_w\lh}}_z=\KL_{z,w}\lh,$ it follows that $\KL_z$ becomes the adjoint of the evaluation map
$$\lF_z=\KL_z^*\lF\qquad\forall\, z\in D$$
and we have the {\bf reproducing kernel property} 
\be{8}(\lF_w|\lh)_V=(\t{\y\lF}_w|\lh)_V=(\KL_w^*\lF|\lh)_V=(\lF|\KL_w\lh)_\HL=(\lF|\w{\y{\KL_w\lh}})_\HL=(\lF|\KL_w\lh)_\HL\ee
for all $\lF\in\HL\ic\OL(D,V).$ We use Hilbert space inner products that are conjugate-linear in the first variable. Applying \er{8} to $\KL_z\lx\in\HL$ we obtain 
$$(\KL_z\lx|\KL_w\lh)_\HL=((\KL_z\lx)_w|\lh)_V=(\KL_{z,w}\lx|\lh)_V\qquad\forall\,z,w\in D,\,\forall\,\lx,\lh\in V.$$ 
Assuming that $\KL_{w,w}$ is invertible we have $\KL_w\KL_{w,w}^{-1}\KL_w^*=id_{\VL_w}.$ It follows that
$$(\y\lF_w|\y\lQ_w)_{\VL_w}=(\li_w^*\lF|\li_w^*\lQ)_\HL=(\li_w^*\lF|\KL_w\KL_{w,w}^{-1}\KL_w^*\li_w^*\lQ)_\HL$$
$$=(\KL_w^*\li_w^*\lF|\KL_{w,w}^{-1}\KL_w^*\li_w^*\lQ)_V=(\t{\y\lF}_w|\KL_{w,w}^{-1}\t{\y\lQ}_w)_V=(\lF_w|\KL_{w,w}^{-1}\lQ_w)_V$$
for all $\lF,\lQ\in\HL.$ Thus the hermitian fibre metric on the vector bundle $\VL$ is given by $\KL_{w,w}^{-1}.$ Then the $d$-tuple \er{3} is conjugate to the commuting tuple $z_j^\oc$ of multiplication operators by the coordinate functions
$$(T_j\lF)_z=z_j\,\lF_z=(z_j^\oc\lF)_z\qquad\forall\, z\in D.$$

In this paper we are concerned with {\bf equivariant} vector bundles. Let $G$ be a Lie group acting biholomorphically on $D,$ such that each $g\in G$ lifts to an automorphism of $\lp:\VL\to D,$ i.e. a diffeomorphism (linear on fibres) $\y g:\VL\to\VL$ such that $\lp\circ\y g=g\circ\lp,$ and $\y{g_1g_2}=\y g_1\y g_2$ for all $g_1,g_2\in G.$ Let $\y g_z:\VL_z\to\VL_{gz}$ denote the restriction of 
$\y g$ to $\VL_z,$ and consider the action $\y g^\la$ on holomorphic sections $\ls:D\to\VL$ defined by
$$(\y g^\la\,\ls)_{gz}=\y g_z\,\ls_z\qquad\forall\,z\in D.$$
Now assume that $\HL$ carries a (projective) unitary representation $g^\la$ of $G$ such that
$$\y{g^\la\lF}=\y g^\la\,\y\lF\qquad\forall\,\lF\in\HL.$$
Thus
$$\li_{gz}^*\,g^\la\,\lF=\y{g^\la\lF}_{gz}=(\y g^\la\y\lF)_{gz}=\y g_z\y\lF_z=\y g_z\,\li_z^*\lF,$$
showing that $\li_{gz}^*\,g^\la=\y g_z\,\li_z^*.$ Using the adjoint $\y g_z^*:\VL_{gz}\to\VL_z,$ we obtain
$$\li_{gz}=g^\la\,\li_z\,\y g_z^*.$$ 
This implies $g^\la\,\VL_z=\VL_{gz}.$ Since $\KL_{gz}$ and $g^\la\,\KL_z$ are both isomorphisms $V\to\VL_{gz},$ there exists a unique 'cocyle' $[g]_z\in GL(V)$ with the property
$$\KL_{gz}=g^\la\,\KL_z[g]_z^*.$$
Taking adjoints, we obtain the identity
$$(g^\la\lF)_{gz}=\KL_{gz}^*g^\la\lF=[g]_z\,\KL_z^*\lF=[g]_z\,\lF_z$$
for all $\lF\ic\OL(D,V).$ Thus the action $g^\la$ is a {\bf multiplier action}
$$(g^{-\la}\lF)_z=[g]_z^{-1}\,\lF_{gz}\qquad\forall\,z\in D.$$
Since $g^\la$ is unitary it follows that
\be{62}\KL_{gz,gw}=\KL_{gz}^*\KL_{gw}=(g^\la\,\KL_z[g]_z^*)^*g^\la\,\KL_w[g]_w^*=[g]_z\KL_z^*\KL_w[g]_w^*=[g]_z\KL_{z,w}[g]_w^*.\ee
Assuming that \er{7} holds for a larger algebra $\AL$ of holomorphic functions $f$ on $D$ which is invariant under the group action 
(e.g., all bounded holomorphic functions), we obtain the {\bf imprimitivity relations}    
$$f^\oc(g^\la\,\lF)=g^\la\,(f\circ g)^\oc\lF\qquad\forall\,f\in\AL.$$ 
In fact, for any $\lF\in\HL\ic\OL(D,V)$ and $z\in D$ we have
$$(f^\oc(g^\la\,\lF))_{gz}=f(gz)\,(g^\la\lF)_{gz}=(f\circ g)(z)\,[g]_z\,\lF_z=[g]_z((f\circ g)^\oc\lF)_z=(g^\la\,(f\circ g)^\oc\lF)_{gz}.$$
Another result of \cite{CD1} says that $T$ is irreducible if and only if the vector bundle $\VL$ is irreducible. In the homogeneous setting the Hermitian structure of the vector bundle $\VL$ may be irreducible, even if the action of the group of automorphisms on the sections of the vector bundle $\VL$ is reducible. When the module action is continuous, this is exactly the same as the irreducibility of the $d$-tuple of the adjoint of the multiplication operators on the sections of the vector bundle $\VL$. 

\section{Homogeneous vector bundles on symmetric Domains}
In this section we construct holomorphic cocycles and their associated homogeneous vector bundles on hermitian bounded symmetric domains of arbitrary rank. It is well known \cite{FK,L} that these domains can be algebraically described in terms of {\bf Jordan algebras} and {\bf Jordan triples}. Let $Z$ be an irreducible hermitian Jordan triple of rank $r,$ with Jordan triple product denoted by 
$(u\Box v^*)w=\{uv^*w\}$ for all $u,v,w\in Z.$ The {\bf quadratic representation} is defined by
\be{35}Q_u\,z:=\{uz^*u\}.\ee
Define the {\bf Bergman endomorphism} $B_{u,v}\in End (Z)$ by
$$B_{u,v}=I-2u\,\Box\,v^*+Q_u\,Q_v\qquad\forall\,u,v\in Z.$$ 
Let $K\ic GL(Z)$ denote the compact linear group of all Jordan triple automorphisms. The complexified group $C=K^\Cl$ consists of all invertible linear maps $h\in GL(Z)$ satisfying 
\be{16}h\,Q_z\,h^*=Q_{hz}\qquad\forall\,z\in Z.\ee 
Here $h^*$ is the adjoint relative to the $K$-invariant inner product
$$(u|v):=\f2p\, tr\, u\Box v^*,$$
where $p$ is the so-called genus. We have 
$$h^{-1}\,B_{u,v}\,h=B_{h^{-1}u,\,h^*v}$$  
for all $h\in K^\Cl$. The (spectral) unit ball 
$$D=\{z\in Z:\,B(z,z)>0\}\ic Z$$ 
is an irreducible hermitian bounded symmetric domain. The identity component $G$ of the biholomorphic automorphism group of 
$D$ is a semi-simple real Lie group and $D=G/K,$ with $K=\{g\in G|\,g(0)=0\}$ the stabilizer at the origin $0\in D$. A pair $(z,w)\in Z\xx Z$ is called {\bf quasi-invertible} if $B_{z,w}\in GL(Z)$ is invertible. In this case, the element
$$z^w:=B^{-1}_{z,w}(z-Q_z\, w)\in Z$$  
is called the {\bf quasi-inverse}. Its exact denominator is the {\bf quasi-determinant} $\lD_{z,w},$ an (inhomogeneous) sesqui-polynomial which satisfies
$$\det B_{z,w}=\lD_{z,w}^p.$$
The covariance formula 
\be{9}B_{gz,gw}=(\dl_zg)\,B_{z,w}\,(\dl_wg)^*\qquad\forall\,g\in G\ee
implies
\be{10}\lD_{gz,gw}=\det(\dl_zg)^{1/p}\,\lD_{z,w}\,\o{\det(\dl_wg)}^{1/p}\qquad\forall\,g\in G.\ee
Therefore the $G$-invariant measures on $D$ have the form
\be{15}d\lm(x)=c\cdot\,\lD_{x,x}^{-p}\,dx,\ee
where $c>0$ is a constant and $dx$ denotes Lebesgue measure on $Z.$ By \cite{L} the compact dual space $\h Z$ of $Z,$ called the {\bf conformal compactification} of $Z$ has a Jordan theoretic description in terms of the quasi-inverse. The group $G^\Cl$ of all biholomorphic automorphisms of $\h Z$ has a decomposition
$$G^\Cl=P_-\,K^\Cl\,P_+,$$  
where $P_-$ consists of all translations
$$\tL_w\,z:=z+w,\,w\in Z$$  
and $P_+$ consists of all rational transformations
$$\o\tL_w\,z:=z^{-w},\,w\in Z.$$  
Then $\h Z=G^\Cl/K^\Cl\,P_+.$ For all $\,h\in K^\Cl$ we have 
\be{50}h\,\tL_w\,h^{-1}=\tL_{hw}\ee 
and $h^{-1}(z^w)=(h^{-1}\,z)^{h^*w},$ or equivalently
\be{29}h^{-1}\,\o\tL_w\,h=\o\tL_{h^*w}.\ee 
One can show \cite{L} that 
\be{51}\o\tL_w\,\tL_z=\tL_{z^{-w}}\,B^{-1}_{z,-w}\,\o\tL_{w^{-z}}\ee
whenever $(z,-w)$ is quasi-invertible. For any $x\in D$ we define the {\bf transvection}
\be{21}\gl_x:=\tL_x\,B^{1/2}_{x,x}\,\o\tL_x\in G\ee
using the sesqui-holomorphic square-root $(z,w)\mapsto B_{z,w}^{1/2}$ on the simply-connected domain $D\xx D$ determined by 
$B_{0,0}^{1/2}=id.$
\begin{definition} Let $C$ be a complex Lie subgroup of $G^\Cl$ containing $K^\Cl$. A mapping 
$$G\xx D\to C,\,(g,z)\mapsto [g]_z,$$
is called a $C$-valued {\bf holomorphic cocycle} if for every $g\in G$ the map $D\ni z\mapsto [g]_z\in C$ is holomorphic and the cocycle identity
\be{13}[g_1\,g_2]_z=[g_1]_{g_2(z)}\,[g_2]_z\ee
holds for all $g_1,g_2\in G$ and $z\in D.$ We assume in addition that
\be{14}[k]_z=k\qquad\forall\,k\in K.\ee 
\end{definition}
\begin{definition}\label{a} Consider a holomorphic $C$-valued cocycle and an irreducible holomorphic representation of $C$ on a complex vector space $V,$ endowed with a $K$-invariant inner product $(\lf|\lq)_V.$ The $V$-{\bf valued Bergman space} $H^2_\ln(D,V)$ consists of all holomorphic maps $D\ni z\mapsto\lF_z\in V$ which are square-integrable under the inner product
\be{11}(\lF|\lQ)=\I_{D}d\lm(x)\lD_{x,x}^\ln\,([\gl_x]_0^{-1}\,\lF_x|\,[\gl_x]_0^{-1}\,\lQ_x)_V.\ee
Here $\ln$ is a parameter which is sufficiently large to ensure that $H^2_\ln(D,V)$ contains all polynomial maps in the variable $z.$
\end{definition}
In this definition the inner product is given in terms of a measure on $D.$ Later, we will consider more general parameters $\ln$ in the so-called (continuous) Wallach set, which are obtained via analytic continuation. For some special values of $\ln$, the corresponding inner products may still be defined via semi-invariant measures on boundary orbits of $G,$ but in general there is no measure theoretic realization anymore.
\begin{proposition}\label{a} For a holomorphic map $\lF:D\to V$ we put
$$(g^{-\la}\,\lF)_z=[g]_z^{-1}\cdot\,\lF_{gz}\qquad\forall\,(g,z)\in G\xx D.$$
Then
\be{12}(g^{-\la_\ln}\,\lF)_z=\det(\dl_zg)^{\ln/p}\,(g^{-\la}\lF)_z\ee
defines a unitary (projective) representation of $G$ on $H^2_\ln(D,V).$ 
\end{proposition}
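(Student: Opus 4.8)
The plan is to verify the two defining properties of a (projective) unitary representation: the cocycle/homomorphism property $g_1^{-\la_\ln}(g_2^{-\la_\ln}\lF)=(g_1g_2)^{-\la_\ln}\lF$ up to a scalar, and the isometry property $(g^{-\la_\ln}\lF\,|\,g^{-\la_\ln}\lQ)=(\lF\,|\,\lQ)$. The homomorphism property will follow by combining the cocycle identity \er{13} for the $C$-valued cocycle $[g]_z$ with the chain rule for the Jacobian factor $\det(\dl_z g)^{\ln/p}$. Explicitly, differentiating $g_1\circ g_2$ gives $\dl_z(g_1 g_2)=(\dl_{g_2 z}g_1)(\dl_z g_2)$, so $\det(\dl_z(g_1g_2))^{\ln/p}=\det(\dl_{g_2z}g_1)^{\ln/p}\det(\dl_z g_2)^{\ln/p}$, where the splitting of the $\ln/p$-power is valid only up to a unimodular constant (this is exactly the source of the projective, rather than genuine, multiplier). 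Matching this against $[g_1g_2]_z=[g_1]_{g_2 z}[g_2]_z$ and reading off definition \er{12} shows the composite multiplier factorizes correctly.

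The isometry property is the heart of the matter and where I expect the main obstacle. First I would change variables in the integral \er{11} defining $(g^{-\la_\ln}\lF\,|\,g^{-\la_\ln}\lQ)$ via $x\mapsto g^{-1}x$ (equivalently $x=gy$), using that $d\lm$ is $G$-invariant by \er{15}. Under this substitution the integrand transforms through three ingredients that must conspire to cancel: the Jacobian factors $|\det(\dl_y g)|^{2\ln/p}$ coming from \er{12}; the transformation $\lD_{gy,gy}^\ln=|\det(\dl_y g)|^{2\ln/p}\,\lD_{y,y}^\ln$ of the quasi-determinant, which is the modulus-squared version of \er{10}; and the transformation of the bundle factor $[\gl_{gy}]_0^{-1}$ relative to $[\gl_y]_0^{-1}$. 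The first two factors manifestly cancel, reducing the problem to showing that the fibre inner product is preserved, i.e. that the residual transvection factors combine into a $K$-element acting by a $V$-unitary.

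The crux is thus an identity expressing $[\gl_{gy}]_0^{-1}\,[g]_{gy}\,[\gl_y]_0$ as an element of $K$ (acting unitarily on $V$ by the $K$-invariance of $(\cdot|\cdot)_V$ assumed in Definition~\ref{a}). I would establish this by using the transvection formula \er{21}, $\gl_x=\tL_x B^{1/2}_{x,x}\o\tL_x$, together with the structural relations \er{50}, \er{29}, and \er{51} governing how translations, the $P_+$-part, and $K^\Cl$-elements recombine under the Peirce/Harish-Chandra decomposition $G^\Cl=P_-K^\Cl P_+$. Concretely, the element $\gl_{gy}^{-1}\,g\,\gl_y$ fixes the origin $0\in D$ (since $\gl_y(0)=y$, $g$ sends $y$ to $gy$, and $\gl_{gy}^{-1}$ sends $gy$ back to $0$), hence lies in the stabilizer $K$; projecting this group identity through the cocycle via \er{13} and \er{14} converts it into the desired relation among the multiplier values, and the covariance \er{9} identifies the resulting Jacobian contribution. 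The main difficulty is bookkeeping the $K^\Cl$-components cleanly so that the non-unitary $P_\pm$-parts drop out and only the genuine $K$-action survives; this is precisely where the $C\supset K^\Cl$ containment and the normalization \er{14} are indispensable.
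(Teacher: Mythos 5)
Your proposal is correct and takes essentially the same route as the paper: the homomorphism property via the cocycle identity \er{13} and the chain rule (projective because of the fractional power), and unitarity by combining the $G$-invariance of $d\lm$, the transformation law \er{10} of the quasi-determinant, and the key observation that $\gl_{gy}^{-1}\,g\,\gl_y$ fixes the origin and hence lies in $K$, which projected through \er{13} and \er{14} gives the needed $K$-element, with $K$-invariance of $(\cdot|\cdot)_V$ finishing the argument. Only note that your displayed crux identity should read $[\gl_{gy}]_0^{-1}\,[g]_y\,[\gl_y]_0\in K$ (subscript $y$, not $gy$, exactly as your own verbal derivation produces), and that the structural relations \er{21}, \er{50}, \er{29}, \er{51} you invoke are not needed --- the purely formal use of \er{13} and \er{14} suffices.
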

\begin{proof} Clearly, $g^{-\la_\ln}\,\lF:D\to V$ is holomorphic, and we have $g_1^{\la_\ln}g_2^{\la_\ln}=(g_1\,g_2)^{\la_\ln}$ whenever $g_1,g_2\in G$, since \er{13} implies
$$(g_2^{-\la_\ln}\,g_1^{-\la_\ln}\,\lF)_z=\det(\dl_zg_2)^{\ln/p}\,[g_2]_z^{-1}\cdot(g_1^{-\la_\ln}\,\lF)_{g_2(z)}$$
$$=\det(\dl_zg_2)^{\ln/p}\,[g_2]_z^{-1}\cdot\det(\dl_{g_2(z)}g_1)^{\ln/p}\,[g_1]_{g_2(z)}^{-1}\,\lF_{g_1(g_2(z))}
=\det(\dl_z(g_1\,g_2))^{\ln/p}\,[g_1\,g_2]_z^{-1}\,\lF_{(g_1\,g_2)(z)}.$$
More precisely, this identity holds up to a constant multiple depending on $g_1,g_2$ if $\ln/p$ is not an integer. To show unitarity, note that for any $x\in D$ and $g\in G$, we have $g\,\gl_x=\gl_{gx}k$ for a suitable $k\in K$. It follows that
$$[g]_x\,[\gl_x]_0=[g\,\gl_x]_0=[\gl_{gx}k]_0=[\gl_{gx}]_0\,[k]_0=[\gl_{gx}]_0\,k$$  
by \er{13} and \er{14}. By $K$-invariance we obtain
$$\([\gl_x]_0^{-1}\,[g]_x^{-1}\,\lf\|\,[\gl_x]_0^{-1}\,[g]_x^{-1}\,\lq\)_V
=\(k^{-1}\,[\gl_{gx}]_0^{-1}\,\lf\|\,k^{-1}\,[\gl_{gx}]_0^{-1}\,\lq\)_V=\([\gl_{gx}]_0^{-1}\,\lf\|\,[\gl_{gx}]_0^{-1}\,\lq\)_V$$
for all $\lf,\lq\in V$. Using the $G$-invariant measure \er{15}, we obtain
$$\(g^{-\la_\ln}\,\lF\|g^{-\la_\ln}\,\lQ\)=\I_{D}d\lm(x)\,\lD^\ln_{x,x}\,
\([\gl_x]_0^{-1}\,(g^{-\la_\ln}\,\lF)_x\|\,[\gl_x]_0^{-1}\,(g^{-\la_\ln}\,\lQ)_x\)_V$$
$$=\I_{D}d\lm(x)\,\lD^\ln_{x,x}\,|\det(\dl_xg)|^{2\ln/p}\,
\([\gl_x]_0^{-1}\,[g]_x^{-1}\,\lF_{gx}\|\,[\gl_x]_0^{-1}\,[g]_x^{-1}\,\lQ_{gx}\)_V$$
$$=\I_{D}d\lm(x)\,\lD^\ln_{gx,gx}\,\([\gl_{gx}]_0^{-1}\,\lF_{gx}|\,[\gl_{gx}]_0^{-1}\,\lQ_{gx}\)_V 
=\I_{D}d\lm(x)\,\lD^\ln_{x,x}\,\([\gl_x]_0^{-1}\,\lF_x\|\,[\gl_x]_0^{-1}\,\lQ_x\)_V=(\lF|\lQ).$$
\end{proof}
The distinction between $g^\la$ and $g^{\la_\ln}$ made in the previous proposition isolates the role of the 'quantization' or 'deformation' parameter $\ln.$ Alternatively, one could adapt the cocycle $[g]_z$ to incorporate the parameter 
$\ln.$ Geometrically, this means tensoring $V$ by a suitable power of a line bundle. 

Our first example concerns the group $C=K^\Cl.$
\begin{proposition} For every $g\in G$, the complex derivative $\dl_zg\in GL(Z)$ at $z\in D$ belongs to  $K^\Cl,$ giving a holomorphic cocycle
$$G\xx D\to K^\Cl,\,(g,z)\mapsto\dl_zg.$$   
\end{proposition}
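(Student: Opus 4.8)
The plan is to verify the two defining requirements of a holomorphic $K^\Cl$-valued cocycle for the map $(g,z)\mapsto\dl_zg$: first, that $\dl_zg$ actually lands in $K^\Cl$ (not merely in $GL(Z)$), and second, that it satisfies the cocycle identity \er{13} together with the normalization \er{14}. The cocycle identity and holomorphic dependence are the easy half, so I would dispose of them first. Holomorphicity in $z$ is immediate because $g$ is biholomorphic, so its complex derivative $\dl_zg$ depends holomorphically on $z$. The cocycle identity is just the chain rule: for $g_1,g_2\in G$ and $z\in D$, $\dl_z(g_1\oc g_2)=(\dl_{g_2(z)}g_1)\,(\dl_zg_2)$, which is exactly \er{13} with $[g]_z=\dl_zg$. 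The normalization \er{14} holds because each $k\in K$ acts linearly on $Z$, so $\dl_zk=k$ identically; in particular $\dl_0k=k$.

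The substantive part is showing $\dl_zg\in K^\Cl$, i.e. that $\dl_zg$ satisfies the defining relation \er{16}, namely $(\dl_zg)\,Q_w\,(\dl_zg)^*=Q_{(\dl_zg)w}$ for all $w\in Z$. Here the key input I expect to use is the covariance formula \er{9} for the Bergman endomorphism, $B_{gz,gw}=(\dl_zg)\,B_{z,w}\,(\dl_wg)^*$. The strategy is to reduce the general relation to a statement about the origin by transport of structure. First I would check the claim at a single base point: I want to see that $\dl_0 g\in K^\Cl$ whenever $g$ fixes $0$, i.e. for $g\in K$, which is trivial since then $\dl_0g=g\in K\ic K^\Cl$; and more generally I would use the transvections $\gl_x$ of \er{21} and the decomposition $g\,\gl_x=\gl_{gx}k$ (used already in the proof of Proposition \ref{a}) to move an arbitrary point to the origin.

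Concretely, I would argue as follows. Given $g\in G$ and $z\in D$, I factor $g=\gl_{gz}\,k\,\gl_z^{-1}$ for a suitable $k\in K$, so that by the chain rule $\dl_zg=(\dl_0\gl_{gz})\,k\,(\dl_z\gl_z^{-1})$. Thus it suffices to show that $\dl_0\gl_x\in K^\Cl$ for every $x\in D$, since $K\ic K^\Cl$ and $K^\Cl$ is a group closed under inverses and products. From the defining formula $\gl_x=\tL_x\,B^{1/2}_{x,x}\,\o\tL_x$ and $\o\tL_x(0)=0^{-x}=0$, the chain rule gives $\dl_0\gl_x=(\dl_0\tL_x)\,B^{1/2}_{x,x}\,(\dl_0\o\tL_x)$. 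Since $\tL_x$ is a translation its derivative is the identity, and I would compute $\dl_0\o\tL_x$ from the quasi-inverse expansion $z^{-x}=z+\cdots$, giving $\dl_0\o\tL_x=id$ as well, so that $\dl_0\gl_x=B^{1/2}_{x,x}$. It then remains to verify that $B^{1/2}_{x,x}\in K^\Cl$. Since $B_{x,x}$ is positive and $B_{x,x}\in K^\Cl$ would follow from \er{16} at $g=\gl_x^{-1}$ via \er{9}, I expect the cleanest route is: the covariance formula \er{9} with the transvection shows $B_{x,x}=(\dl_0\gl_x)\,B_{0,0}\,(\dl_0\gl_x)^*=(\dl_0\gl_x)(\dl_0\gl_x)^*$, and combined with the relation $h\,Q_z\,h^*=Q_{hz}$ characterizing $K^\Cl$ one checks that the self-adjoint square root of an element of $K^\Cl$ again lies in $K^\Cl$ (using that $K^\Cl$ is invariant under the holomorphic functional calculus compatible with the Jordan structure). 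The main obstacle, and the step deserving the most care, is precisely this last point — verifying $B^{1/2}_{x,x}\in K^\Cl$ rather than merely $B_{x,x}\in K^\Cl$ — since it requires knowing that the square root used to define the transvection stays inside the structure group $K^\Cl$; this is a standard Jordan-theoretic fact (see \cite{L}) that I would invoke, tracing it back to \er{16} and the positivity of $B_{x,x}$ on $D$.
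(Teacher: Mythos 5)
Your overall architecture is sound and close to the paper's: the chain rule gives the cocycle identity, $\dl_zk=k$ gives the normalization \er{14}, and the problem is reduced to transvections. (The paper writes $g=\gl_x\,k$ and computes $\dl_z\gl_x=B^{1/2}_{x,x}\,B^{-1}_{z,-x}$ for all $z$, which is \er{17}; your factorization $g=\gl_{gz}\,k\,\gl_z^{-1}$, giving $\dl_zg=B^{1/2}_{gz,gz}\,k\,B^{-1/2}_{z,z}$, is an equally valid reduction, and your computation $\dl_0\gl_x=B^{1/2}_{x,x}$ agrees with \er{18}.) You also correctly flag that everything hinges on putting Bergman operators, and their square roots, inside $K^\Cl$.

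But at exactly that point your argument is circular. To get $B_{x,x}\in K^\Cl$ you invoke the covariance formula \er{9}, which at $z=w=0$, $g=\gl_x$ gives $B_{x,x}=(\dl_0\gl_x)\,B_{0,0}\,(\dl_0\gl_x)^*$. This places $B_{x,x}$ in $K^\Cl$ only if you already know $\dl_0\gl_x\in K^\Cl$ --- which is precisely what you are trying to prove; and since $\dl_0\gl_x=B^{1/2}_{x,x}$ is self-adjoint, the identity is in fact the tautology $B_{x,x}=B^{1/2}_{x,x}B^{1/2}_{x,x}$ and carries no information about membership in $K^\Cl$. The same objection applies to your parenthetical ``\er{16} at $g=\gl_x^{-1}$ via \er{9}.'' The missing non-circular input --- and the actual key step of the paper's proof --- is the Jordan-triple identity of Loos \cite{L},
$$B_{u,v}\,Q_z\,B_{v,u}=Q_{B_{u,v}z},$$
which, combined with the fact that $B_{u,v}^*=B_{v,u}$ for the $K$-invariant inner product, verifies the defining relation \er{16} for $h=B_{u,v}$ directly, for every quasi-invertible pair $(u,v)$; in particular $B_{x,x}\in K^\Cl$ (and $B_{z,-x}\in K^\Cl$, which the paper's route needs as well). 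Once this is in hand, your remaining step --- that the positive square root of a positive element of $K^\Cl$ again lies in $K^\Cl$ --- is indeed a standard fact about the reductive group $K^\Cl$ (the paper uses it silently too), and the rest of your argument goes through unchanged.
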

\begin{proof} In view of \er{16}, the identity \cite{L}
$$B_{u,v}\,Q_z\,B_{v,u}=Q_{B_{u,v}z}$$  
shows that $B_{u,v}\in K^\Cl$ whenever $(u,v)$ is quasi-invertible. Since $\o\tL_x(0)=0$ and $B^{1/2}_{x,x}$ is linear, we have
$\gl_x(0)=\tL_x(0)=x.$ Since
$$\dl_z\o\tL_x=B^{-1}_{z,-x}$$  
it follows that
\be{17}\dl_z\gl_x=B^{1/2}_{x,x}\,B^{-1}_{z,-x}\ee
belongs to $K^\Cl$ for all $z,x\in D.$ In particular,
\be{18}\dl_0\gl_x=B^{1/2}_{x,x}.\ee
Since $\dl_zk=k$ for all $k\in K,$ the first assertion follows. The chain rule implies the cocycle identity 
$\dl_z(g_1\,g_2)=(\dl_{g_2z}g_1)(\dl_zg_2)$ for all $g_1,g_2\in G$ and $z\in D.$
\end{proof}
In order to find irreducible holomorphic representations of $K^\Cl,$ consider the algebra $\PL Z$ of all holomorphic polynomials 
$\lf:Z\to\Cl,$ with the holomorphic action $\lp$ defined by 
$$(h^{-\lp}\,\lf)(\lz):=\lf(h\lz)\qquad\forall\,h\in K^\Cl.$$ 
By \cite{S,U3} there is a multiplicity-free {\bf Peter-Weyl decomposition}
$$\PL Z=\S_\ll\PL_\ll Z$$
into irreducible $K^\Cl$-submodules $\PL_\ll Z$, which are pairwise inequivalent. Here $\ll$ runs over all {\bf integer partitions}
$$\ll_1\ge\cdots\ge\ll_r\ge 0$$  
of length $\le r$. Using the {\bf Fischer-Fock inner product}
$$(\lf|\lq)_F:=\I_Z\f{d\lo}{\lp^d}\,e^{-(\lo|\lo)}\,\o{\lf(\lo)}\,\lq(\lo),$$
it follows that there is an expansion 
$$e^{(\lz|\lo)}=\S_\ll\EL^\ll_{\lz,\lo}\qquad\forall\,\lz,\lo\in Z,$$ 
where
\be{60}\EL^\ll_{\lz,\lo}=\S_m\lf_\ll^m(\lz)\o{\lf_\ll^m(\lo)}\ee
is the reproducing kernel of $\PL_\ll Z,$ and $\lf_\ll^m$ is any orthonormal basis of $\PL_\ll Z.$
\begin{definition} For a fixed partition $\ll,$ the {\bf little Hilbert space} $H^2_\ln(D,\,\PL_\ll Z)$ consists of all holomorphic maps $D\ni z\mapsto\lF_z(\lz)\in\PL_\ll Z$ which are square-integrable under the inner product 
\be{63}(\lF|\lQ):=\I_{D}d\lm(x)\,\lD_{x,x}^\ln\,(\lF_x|\lQ_x\circ B_{x,x})_F.\ee
Here the parameter $\ln$ is sufficiently large to ensure that $H^2_\ln(D,\PL_\ll Z)$ contains all polynomial maps in the variable $z.$
\end{definition}
\begin{proposition} For a holomorphic map $\lF:D\to\PL_\ll Z$ we put
$$(g^{-\la}\,\lF)_z (\lz)=\lF_{g(z)}((\dl_zg)\lz)\qquad\forall\,(g,z)\in G\xx D.$$
Then
\be{19}(g^{-\la_\ln}\,\lF)_z(\lz)=\det(\dl_zg)^{\ln/p}\,(g^{-\la}\lF)_z(\lz)=\det(\dl_zg)^{\ln/p}\,\lF_{g(z)}(\dl_zg)\lz)\ee
defines a unitary (projective) representation of $G$ on $H^2_\ln(D,\,\PL_\ll Z).$
\end{proposition}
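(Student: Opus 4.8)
The plan is to realize this proposition as a direct specialization of the general Bergman-space Proposition (the one attached to the inner product \er{11} and the action \er{12}), applied to the cocycle $[g]_z=\dl_z g$ and the $K^\Cl$-module $V=\PL_\ll Z$. By the preceding Proposition the assignment $(g,z)\mapsto\dl_z g$ is a holomorphic $K^\Cl$-valued cocycle, and by the Peter-Weyl decomposition $\PL_\ll Z$ is an irreducible holomorphic $K^\Cl$-module; restricting the Fischer-Fock product $(\cdot|\cdot)_F$ to $\PL_\ll Z$ furnishes the required $K$-invariant inner product (its $K$-invariance follows from the substitution rule below applied to a unitary $k\in K$). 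Thus the hypotheses of the general construction are in force, and only two identifications remain: that the action \er{19} matches \er{12}, and that the inner product \er{63} matches \er{11}.

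The identification of the actions is immediate. Using $(h^{-\lp}\lf)(\lz)=\lf(h\lz)$, the general prescription $[g]_z^{-1}\cdot\lF_{gz}$ becomes $(g^{-\la}\lF)_z=\lF_{g(z)}\circ(\dl_z g)$, that is $(g^{-\la}\lF)_z(\lz)=\lF_{g(z)}((\dl_z g)\lz)$, which is exactly the displayed definition; multiplying by $\det(\dl_z g)^{\ln/p}$ turns \er{12} into \er{19}.

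The only genuine computation is the identification of the inner products, which I regard as the main step. By \er{18} the cocycle evaluated along a transvection is $[\gl_x]_0=\dl_0\gl_x=B^{1/2}_{x,x}$, so the factor appearing in \er{11} is $[\gl_x]_0^{-1}\lF_x=\lF_x\circ B^{1/2}_{x,x}$, and likewise for $\lQ_x$. The key lemma is the behaviour of the Fischer-Fock product under a linear substitution,
$$(\lf\circ h\,|\,\lq)_F=(\lf\,|\,\lq\circ h^*)_F,$$
valid for every $h\in GL(Z)$; I would verify it by expanding $\lf,\lq$ in an orthonormal basis of monomials adapted to $h$, reducing to the one-variable identity $(\lz^m|\lz^n)_F=\ld_{mn}\,n!$. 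Applying it with $h=B^{1/2}_{x,x}$, and using that $B_{x,x}$ is positive and self-adjoint on $(Z,(\cdot|\cdot))$ for $x\in D$ — by the very definition of the spectral ball $D$ — so that $B^{1/2}_{x,x}$ is self-adjoint with square $B_{x,x}$, I obtain
$$([\gl_x]_0^{-1}\lF_x\,|\,[\gl_x]_0^{-1}\lQ_x)_F=(\lF_x\circ B^{1/2}_{x,x}\,|\,\lQ_x\circ B^{1/2}_{x,x})_F=(\lF_x\,|\,\lQ_x\circ B_{x,x})_F,$$
which is precisely the integrand of \er{63}. Since both inner products carry the same weight $\lD_{x,x}^\ln\,d\lm(x)$, the two Hilbert spaces coincide.

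With these two identifications in hand, the unitarity of each $g^{-\la_\ln}$ and the (projective) representation identity $g_1^{\la_\ln}g_2^{\la_\ln}=(g_1 g_2)^{\la_\ln}$ are inherited verbatim from the general Bergman-space Proposition; no further argument is needed. The crux is thus the inner-product identification, and inside it the substitution rule for $(\cdot|\cdot)_F$ together with the self-adjointness of $B^{1/2}_{x,x}$; everything else amounts to matching the explicit formulas \er{19} and \er{63} against the general templates \er{12} and \er{11}.
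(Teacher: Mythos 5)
Your proposal is correct and follows essentially the same route as the paper: the paper's proof likewise reduces everything to the general Proposition for $V$-valued Bergman spaces, using $[\gl_x]_0=\dl_0\gl_x=B^{1/2}_{x,x}$ and the self-adjointness of $B_{x,x}\in K^\Cl$ to show $(\lf\circ B^{1/2}_{x,x}|\lq\circ B^{1/2}_{x,x})_F=(\lf|\lq\circ B_{x,x})_F$, so that \er{63} is the specialization of \er{11}. You merely make explicit the details the paper leaves implicit (the Fock-space substitution rule $(\lf\circ h|\lq)_F=(\lf|\lq\circ h^*)_F$ and the matching of \er{19} with \er{12}), which is fine.
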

\begin{proof} Since $B_{x,x}\in K^\Cl$ is self-adjoint, we have
$$(\lf\circ(\dl_0\gl_x)|\lq\circ(\dl_0\gl_x))_F=(\lf\circ B^{1/2}_{x,x}|\lq\circ B^{1/2}_{x,x})_F=(\lf|\lq\circ B_{x,x})_F$$
for $\lf,\lq\in\PL_\ll Z.$ Now the assertion follows as a special case of Proposition \er{a}.
\end{proof}
The representation \er{19} is irreducible and belongs to the {\bf holomorphic discrete series} of $G$ if $\ln$ is sufficiently large.
For the 'empty' partition $\ll=0$ we may identify $\PL_0Z=\Cl$ with the constant functions. Thus we obtain the usual scalar-valued weighted Bergman space $H^2_\ln(D,\Cl).$
\begin{example} For the unit disk $\Dl$ and $\ll\in\Nl$, $H^2_\ln(\Dl,\PL_\ll\Cl)$ consists of holomorphic maps
$$z\mapsto p_z(\lz)=\lz^\ll\,p(z)$$  
on $\Dl$, with the $G$-action  
$$(g^{-\la_\ln}\,p)_z(\lz)=(\dl_zg)^{\ln/2}\,p(gz)((\dl_zg)\lz)^\ll=(\dl_zg)^{\ll+\ln/2}\,p(gz)\,\lz^\ll
=\f{\lz^\ll}{(cz+d)^{2\ll+\ln}}\,p(g(z)).$$ 
It follows that
$$(g^{-\la_\ln}\,p)(z)=\f{p(g(z))}{(cz+d)^{2\ll+\ln}}.$$  
\end{example}
Our main construction involves the group $C=K^\Cl\,P_+.$ 
\begin{proposition} The translations $z\mapsto\tL_z$ define a holomorphic map $D\to G^\Cl,$ and
\be{20}[g]_z:=\tL^{-1}_{g(z)}\,g\,\tL_z=(\dl_zg)\o\tL_{-g^{-1}(0)^z}\ee
is a holomorphic cocycle with values in $K^\Cl\,P_+.$
\end{proposition}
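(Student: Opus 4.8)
The plan is to verify three things: that $z\mapsto\tL_z$ is holomorphic $D\to G^\Cl$, that the displayed factorization in \er{20} holds, and that the resulting $[g]_z$ satisfies the cocycle identity \er{13} with values in $K^\Cl P_+$. The holomorphy of $z\mapsto\tL_z$ is immediate, since $\tL_z(w)=w+z$ depends (complex-)linearly, hence holomorphically, on the parameter $z\in D\ic Z$, and each $\tL_z\in P_-\ic G^\Cl$.

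First I would establish the \emph{cocycle identity} directly from the definition $[g]_z:=\tL^{-1}_{g(z)}\,g\,\tL_z$, independently of the second formula. For $g_1,g_2\in G$ one computes
$$[g_1]_{g_2(z)}\,[g_2]_z=\tL^{-1}_{g_1(g_2(z))}\,g_1\,\tL_{g_2(z)}\,\tL^{-1}_{g_2(z)}\,g_2\,\tL_z
=\tL^{-1}_{(g_1g_2)(z)}\,(g_1g_2)\,\tL_z=[g_1g_2]_z,$$
the middle telescoping being the key cancellation $\tL_{g_2(z)}\tL^{-1}_{g_2(z)}=\mathrm{id}$. This is the easy, formal half and needs no Jordan-theoretic input beyond $g_1(g_2(z))=(g_1g_2)(z)$.

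The substantive step is the second equality in \er{20}, namely that $\tL^{-1}_{g(z)}\,g\,\tL_z=(\dl_zg)\,\o\tL_{-g^{-1}(0)^z}$, which simultaneously identifies $[g]_z$ as the product of a $K^\Cl$-factor $\dl_zg$ (an element of $K^\Cl$ by the previous Proposition) and a $P_+$-factor $\o\tL_{-g^{-1}(0)^z}$, thereby placing $[g]_z$ in $K^\Cl P_+$. The strategy is to evaluate both sides as germs of biholomorphic maps near the origin and compare. By construction $[g]_z$ fixes $0$: indeed $\tL_z(0)=z$, then $g(z)$, then $\tL^{-1}_{g(z)}$ sends $g(z)$ back to $0$, so $[g]_z\in K^\Cl P_+$ is the stabilizer subgroup and its $K^\Cl$-component is forced to be the differential $\dl_0[g]_z$. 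Computing that differential by the chain rule gives $\dl_0[g]_z=\dl_z g$ (using that $\dl\tL=\mathrm{id}$ and $\dl_0\tL^{-1}_{g(z)}=\mathrm{id}$), which pins down the $K^\Cl$-factor. The remaining $P_+$-factor is then determined by where $[g]_z$ sends the relevant data; here the expansion \er{51} for $\o\tL_w\,\tL_z$ together with \er{29} is exactly the identity that converts the composite $\tL^{-1}_{g(z)}\,g\,\tL_z$ into the normal form $(\dl_zg)\,\o\tL_w$, and matching the $P_+$-parameter yields $w=-g^{-1}(0)^z$.

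The hard part will be pinning down the $P_+$-parameter as precisely $-g^{-1}(0)^z$ rather than merely showing it is some element of $Z$. I expect this to require expressing $g$ itself in the Harish-Chandra normal form coming from $G^\Cl=P_-K^\Cl P_+$, then commuting the outer translations past it using \er{50}, \er{51} and the quasi-inverse identity \er{29}; the appearance of the quasi-inverse $g^{-1}(0)^z$ is precisely the denominator bookkeeping governed by \er{51}. It suffices to check the identity on the generators of $G$ (transvections $\gl_x$ and the linear group $K$), where \er{21}, \er{17}, \er{18} reduce everything to the known action of $B^{1/2}_{x,x}$ and $\o\tL_x$; since both sides of \er{20} are holomorphic in $z$ and the identity is multiplicative in $g$ by the cocycle property already proved, verifying it on generators extends it to all of $G$.
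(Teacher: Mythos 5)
Your first two steps are sound and coincide with the paper: the telescoping verification of the cocycle identity \er{13} is exactly the paper's argument, and your stabilizer observation is a genuinely nice alternative for part of what the paper obtains by explicit computation --- since $\h Z=G^\Cl/K^\Cl P_+$ with base point $0$, the fact that $[g]_z$ fixes $0$ places it in $K^\Cl P_+$, and the chain rule $\dl_0[g]_z=\dl_zg$ (together with $\dl_0\o\tL_v=B_{0,-v}^{-1}=\mathrm{id}$ and uniqueness of the $K^\Cl\,P_+$ factorization) pins down the $K^\Cl$-factor with no Jordan-theoretic computation at all. The paper instead gets membership in $K^\Cl P_+$ and the factor $\dl_zg$ as byproducts of working out $[\gl_x]_z$ explicitly.

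The gap is in your extension step. The cocycle property you proved gives multiplicativity of the \emph{left-hand side} $[g]_z$ only; to deduce \er{20} for a product $g_1g_2$ from \er{20} for $g_1$ and $g_2$, you must additionally show that the right-hand sides compose correctly, i.e.\ that $(\dl_{g_2z}g_1)\,\o\tL_{-g_1^{-1}(0)^{g_2z}}\,(\dl_zg_2)\,\o\tL_{-g_2^{-1}(0)^z}$, brought into $K^\Cl P_+$ normal form via \er{29}, has $P_+$-parameter exactly $-(g_1g_2)^{-1}(0)^z$. For two arbitrary generators (say two transvections) this is a nontrivial quasi-inverse addition formula which you have not supplied, so ``verifying it on generators extends it to all of $G$'' does not stand as written; holomorphy in $z$ is irrelevant here, since the extension is in the $g$-variable. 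The repair is the paper's: every $g\in G$ factors as $g=\gl_x\,k$ with $x=g(0)$ and $k\in K$, because $\gl_x^{-1}g$ fixes $0$ --- so only the single pair $(\gl_x,k)$ ever needs to be composed, not arbitrary words. Then $[g]_z=[\gl_x]_{kz}\,k$ by the cocycle identity and \er{14}; after the explicit transvection computation $[\gl_x]_z=(\dl_z\gl_x)\,\o\tL_{x^{-z}}$ (which is where \er{50}, \er{51}, \er{17} actually enter, and which your sketch also leaves undone), one commutes $k$ through using \er{29} and the equivariance $k^{-1}(x^{-kz})=(k^{-1}x)^{-z}$, and finally identifies $k^{-1}x=-k^{-1}\gl_x^{-1}(0)=-g^{-1}(0)$. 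With that replacement your outline becomes a complete proof, essentially identical to the paper's apart from your cleaner identification of the $K^\Cl$-factor.
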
 
\begin{proof} The cocycle identity
$$[g_1]_{g_2 z}\,[g_2]_z=\tL^{-1}_{g_1\,(g_2 z)}\,g_1\,\tL_{g_2 z}\,\tL^{-1}_{g_2 z}\,g_2\,\tL_z
=\tL^{-1}_{(g_1\,g_2)(z)}\,g_1\,g_2\,\tL_z=[g_1\,g_2]_z$$  
follows from the definition. For $k\in K$ we have $[k]_z=\tL^{-1}_{kz}\,k\,\tL_z=k.$ For the proof of \er{20} note that for the transvections \er{21} we have, in view of \er{50}, \er{51} and \er{17},
$$[\gl_x]_z=\tL^{-1}_{\gl_x(z)}\,\gl_x\,\tL_z=\tL^{-1}_{\gl_x(z)}\,\tL_x\,B^{1/2}_{x,x}\,\o\tL_x\,\tL_z=\tL^{-1}_{\gl_x(z)-x}\,B^{1/2}_{x,x}\,\o\tL_x\,\tL_z=\tL^{-1}_{B^{1/2}_{x,x}(z^{-x})}\,B^{1/2}_{x,x}\,\o\tL_x\,\tL_z$$
$$=B^{1/2}_{x,x}\,\tL^{-1}_{z^{-x}}\,\o\tL_x\,\tL_z=B^{1/2}_{x,x}\,\tL^{-1}_{z^{-x}}\,\tL_{z^{-x}}\,B^{-1}_{z,-x}\,\o\tL_{x^{-z}}
=B^{1/2}_{x,x}\,B^{-1}_{z,-x}\,\o\tL_{x^{-z}}=(\dl_z\gl_x)\,\o\tL_{x^{-z}}.$$  
This shows
$$[\gl_x]_z=(\dl_z\gl_x)\,\o\tL_{x^{-z}}.$$  
In particular
\be{25}[\gl_x]_0=B^{1/2}_{x,x}\,\o\tL_x.\ee
For the general case, write $g=\gl_x\,k$, where $x=g(0)$ and $k\in K$. Then the cocycle identity implies
$$[g]_z=[\gl_x\,k]_z=[\gl_x]_{kz}\,[k]_z=[\gl_x]_{kz}\,k=(\dl_{kz}\gl_x)\o\tL_{x^{-kz}}\,k$$
$$=\dl_z(\gl_x\,k)\,k^{-1}\,\o\tL_{x^{-kz}}\,k=(\dl_zg)\,\o\tL_{k^{-1}(x^{-kz})}=(\dl_zg)\,\o\tL_{(k^{-1}x)^{-z}}$$  
with $k^{-1}x=-k^{-1}\,\gl^{-1}_x(0)=-(\gl_x\,k)^{-1}(0)=-g^{-1}(0).$
\end{proof}
An irreducible representation of $G^\Cl,$ in fact of $K^\Cl P_+,$ is obtained as follows. For fixed $n\in\Nl$ the polynomials
$$\lD^n_{-\lo}(\lz):=\lD^n_{\lz,-\lo},$$  
where $\lo\in Z$ is arbitrary, span a finite-dimensional subspace $\PL^nZ\ic\PL Z.$ There exists a holomorphic representation $\lp_n$ of 
$G^\Cl$ on $\PL^nZ$ defined by
\be{22}(\lg^{-\lp_n}\lf)(\lz)=\det(\dl_\lz\lg)^{-n/p}\,\lf(\lg(\lz))\ee  
for all $\lg\in G^\Cl,\lz\in Z$ and $\lf\in\PL^nZ.$ In analogy to the group $SL(2,\Cl),$ we call this the {\bf spin representation} of level $n$. Now consider the multi-variable Pochhammer symbol \cite{FK}
$$(\ln)_\ll:=\P^r_{j=1}\,(\ln-\f a2\,(j-1))_{\ll_j}$$  
and the Faraut-Kor\'anyi formula \cite{FK}
\be{23}\lD^{-\ln}_{\lz,\lo}=\S_\ll(\ln)_\ll\,\EL^\ll_{\lz,\lo}.\ee
Since $(-n)_\ll$ vanishes whenever $\ll_1>n$ we obtain the Peter-Weyl decomposition 
\be{26}\PL^nZ=\S_{\ll\le n}\PL_\ll Z\ee
labelled by the $\b{n+r}r$ partitions $\ll$ of length $\le r$ with largest part $\ll_1\le n.$ The general $K$-invariant inner product on $\PL^nZ$ is therefore given by
$$(\lf|\lq)=\S_{\ll\le n}b_\ll(\lf_\ll|\lq_\ll)_F$$
for arbitrary constants $b_\ll>0.$ Here we write 
\be{61}\lf=\S_{\ll\le n}\lf_\ll,\ee
where $\lf_\ll\in\PL_\ll Z$ is the Peter-Weyl component. Every $h\in K^\Cl$ acts via a diagonal operator. For $b_\ll=1$ we obtain the Fock inner product on $\PL^nZ\ic\PL Z.$ On the other hand, the inner product on $\PL^nZ,$ determined by
$$(\lD^n_{-\lz}|\lD^n_{-\lo})_U=\lD_{\lz,-\lo}^n\qquad\forall\,\lz,\lo\in Z,$$
is even invariant under the compact form $U$ of $G^\Cl,$ consisting of all biholomorphic isometries of $\h Z.$ This inner product is obtained by putting
$$b_\ll=(-1)^{|\ll|}(-n)_\ll=\P^r_{j=1}(-1)^{\ll_j}(-n-\f a2\,(j-1))_{\ll_j}=\P^r_{j=1}(n-\ll_j+\f a2\,(j-1)+1)_{\ll_j}.$$
Note that $n-\ll_j\ge 0$ by assumption. In fact, by \er{23} we have
$$\lD_{-\lz}^n=\S_{\ll\le n}(-n)_\ll\,\EL^\ll_{-\lz}=\S_{\ll\le n}(-1)^{|\ll|}(-n)_\ll\,\EL^\ll_\lz.$$
Using orthogonality this implies
$$\S_{\ll\le n}(-1)^{|\ll|}(-n)_\ll\,\EL^\ll_{\lz,\lo}=\lD_{\lz,-\lo}^n=(\lD_{-\lz}^n|\lD_{-\lo}^n)_U
=\S_{\ll\le n}\S_{\lm\le n}(-1)^{|\ll|}(-n)_\ll(-1)^{|\lm|}(-n)_\lm\,(\EL^\ll_\lz|\EL^\lm_\lo)_U$$
$$=\S_{\ll\le n}(-n)_\ll^2\,(\EL^\ll_\lz|\EL^\ll_\lo)_U=\S_{\ll\le n}b_\ll(-n)_\ll^2\,(\EL^\ll_\lz|\EL^\ll_\lo)_F
=\S_{\ll\le n}b_\ll(-n)_\ll^2\,\EL^\ll_{\lz,\lo}$$
showing that $b_\ll(-n)_\ll=(-1)^{|\ll|}.$
\begin{definition} Let $(\lf|\lq)$ be any $K$-invariant inner product on $\PL^nZ.$ The {\bf big Hilbert space} $H^2_\ln(D,\PL^nZ)$ consists of all holomorphic maps $D\ni z\mapsto\lF_z(\lz)\in\PL^nZ$ which are square-integrable under the inner product
$$(\lF|\lQ)=\I_{D}d\lm(x)\,\lD_{x,x}^\ln\,(\lD_{\lz,-x}^n\lF_x(B_{x,x}^{1/2}\lz^{-x})|\lD_{\lz,-x}^n\lQ_x(B_{x,x}^{1/2}\lz^{-x})).$$
Here the parameter $\ln$ is sufficiently large to ensure that $H^2_\ln(D,\PL^nZ)$ contains all polynomial maps in the variable $z.$
\end{definition}
\begin{proposition} For holomorphic maps $\lF:D\to\PL^nZ$ we put
$$(g^{-\lb}\,\lF)_z(\lz):=[g]_z^{-1}\,\lF_{g(z)}(\lz)=\det(\dl_{z+\lz}g)^{-n/p}\,\lF_{g(z)}(g(z+\lz)-g(z))\qquad\forall\,(g,z)\in G\xx D.$$ 
Then
\be{24}(g^{-\lb_\ln}\,\lF)_z(\lz):=\det(\dl_zg)^{(\ln+n)/p}\,(g^{-\lb}\lF)_z(\lz)=\det(\dl_zg)^{(\ln+n)/p}\,\det(\dl_{z+\lz}g)^{-n/p}\,\lF_{g(z)}(g(z+\lz)-g(z))\ee
defines a (projective) unitary representation of $G$ on $H^2_\ln(D,\PL^nZ).$
\end{proposition}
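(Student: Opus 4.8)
The plan is to recognize this proposition as the special case of Proposition \er{a} (together with the accompanying Definition \er{a}) obtained by taking $V=\PL^nZ$, the $K^\Cl P_+$-valued cocycle \er{20}, the spin representation $\lp_n$ of \er{22} as the representation of $C=K^\Cl P_+$ on $V$, and the \emph{shifted} weight parameter $\ln+n$ in place of $\ln$. Since $\lp_n$ is an irreducible holomorphic representation of $K^\Cl P_+$ (this is the content of \er{26} and the discussion following it), all the hypotheses of Definition \er{a} are met once the identifications below are in place, and both the (projective) homomorphism property and the unitarity are then inherited verbatim from Proposition \er{a}.

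First I would check that the action $g^{-\lb}$ is exactly the action $g^{-\la}$ of Proposition \er{a} for this data. From $[g]_z=\tL^{-1}_{g(z)}\,g\,\tL_z$ one reads off $[g]_z(\lz)=g(z+\lz)-g(z)$ and, since translations have identity differential, $\dl_\lz[g]_z=\dl_{z+\lz}g$. Feeding these into \er{22} gives
$$([g]_z^{-\lp_n}\lf)(\lz)=\det(\dl_\lz[g]_z)^{-n/p}\,\lf([g]_z\lz)=\det(\dl_{z+\lz}g)^{-n/p}\,\lf(g(z+\lz)-g(z)),$$
which applied to $\lf=\lF_{g(z)}$ reproduces the stated formula for $(g^{-\lb}\lF)_z(\lz)$. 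Hence $g^{-\lb}$ coincides with the action $g^{-\la}$ of Proposition \er{a}, and the twist $\det(\dl_zg)^{(\ln+n)/p}$ in \er{24} is precisely the factor $\det(\dl_zg)^{\ln'/p}$ of \er{12} with $\ln'=\ln+n$.

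The key step is to identify the inner product of the big Hilbert space with the inner product \er{11} at parameter $\ln+n$. Using $[\gl_x]_0=B^{1/2}_{x,x}\,\o\tL_x$ from \er{25}, the relation $\dl_\lz\o\tL_x=B^{-1}_{\lz,-x}$, and $\det B_{z,w}=\lD_{z,w}^p$, I would compute
$$([\gl_x]_0^{-\lp_n}\lf)(\lz)=\det\(B^{1/2}_{x,x}B^{-1}_{\lz,-x}\)^{-n/p}\lf\(B^{1/2}_{x,x}(\lz^{-x})\)=\lD_{x,x}^{-n/2}\,\lD_{\lz,-x}^n\,\lf\(B^{1/2}_{x,x}\lz^{-x}\).$$
Substituting this into \er{11} with weight $\lD_{x,x}^{\ln+n}$, the two representation factors contribute a real scalar $\lD_{x,x}^{-n}$ (the $\lD_{\lz,-x}^n$ and the argument $B^{1/2}_{x,x}\lz^{-x}$ remaining inside the $V$-inner product), and $\lD_{x,x}^{\ln+n}\cdot\lD_{x,x}^{-n}=\lD_{x,x}^\ln$ recovers exactly the integrand defining $H^2_\ln(D,\PL^nZ)$. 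Thus the big Hilbert space is isometrically the $V$-valued Bergman space of Definition \er{a} for $V=\PL^nZ$, representation $\lp_n$ and parameter $\ln+n$. This bookkeeping is also what forces the shift $\ln\mapsto\ln+n$, and I expect it to be the main obstacle, since it is the only place where the geometry of the weight is genuinely used.

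It then remains only to verify that the chosen $K$-invariant inner product on $\PL^nZ$ is invariant under $\lp_n|_K$: for $k\in K$ one has $(k^{-\lp_n}\lf)(\lz)=\det(k)^{-n/p}\lf(k\lz)$ with $|\det k|=1$, while $\lf\mapsto\lf\oc k$ preserves each Peter-Weyl component and the Fischer-Fock inner product, so $k^{-\lp_n}$ is unitary. With this check done, Proposition \er{a} applied at parameter $\ln+n$ delivers simultaneously the (projective) homomorphism property — from the cocycle identity \er{13}, the normalization \er{14}, and the chain rule $\det\dl_z(g_1g_2)=\det(\dl_{g_2z}g_1)\det(\dl_zg_2)$ (which handles the determinant twist up to a scalar when $(\ln+n)/p\notin\Zl$) — and the unitarity of $g^{-\lb_\ln}$ on $H^2_\ln(D,\PL^nZ)$.
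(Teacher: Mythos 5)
Your proposal is correct and takes essentially the same route as the paper: the paper likewise computes $[g]_z(\lz)=g(z+\lz)-g(z)$, hence $\dl_\lz[g]_z=\dl_{z+\lz}g$, evaluates the spin representation on $[\gl_x]_0=B_{x,x}^{1/2}\,\o\tL_x$ to obtain $([\gl_x]_0^{-1}\lq)(\lz)=\lD_{x,x}^{-n/2}\,\lD_{\lz,-x}^n\,\lq(B_{x,x}^{1/2}\lz^{-x})$, and then invokes Proposition \er{a}. The only difference is that you spell out the bookkeeping identifying the big-space inner product with \er{11} at the shifted parameter $\ln+n$ (and the matching twist $\det(\dl_zg)^{(\ln+n)/p}$ in \er{12}), which the paper leaves implicit in the phrase ``the assertion follows as a special case of Proposition \er{a}.''
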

\begin{proof} By definition we have
$$[g]_z(\lz)=\tL^{-1}_{g(z)}\,g\,\tL_z(\lz)=\tL^{-1}_{g(z)}\,g(z+\lz)=g(z+\lz)-g(z)$$ 
and therefore 
$$\dl_\lz[g]_z=\dl_{z+\lz}g$$  
for all $g\in G,z\in D$ and $\lz\in Z$. It follows that
$$([g]_z^{-1}\lf)(\lz)=\det(\dl_\lz[g]_z)^{-n/p}\,\lf([g]_z(\lz))=\det(\dl_{z+\lz}g)^{-n/p}\,\lf(g(z+\lz)-g(z)).$$
for any $\lf\in\PL^nZ$. Since $[\gl_x]_0=B_{x,x}^{1/2}\o\tL_x$ by \er{25} it follows that 
$\dl_\lz[\gl_x]_0=B_{x,x}^{1/2}(\dl_\lz\o\tL_x)=B_{x,x}^{1/2}B_{\lz,-x}^{-1}$ and we obtain
$$([\gl_x]_0^{-1}\lq)(\lz)=\(\det B_{x,x}^{1/2}\,B_{\lz,-x}^{-1}\)^{-n/p}\,\lq(B_{x,x}^{1/2}\o\tL_x(\lz))
=\lD_{x,x}^{-n/2}\,\lD_{\lz,-x}^n\,\lq(B_{x,x}^{1/2}\lz^{-x}).$$
Now the assertion follows as a special case of Proposition \er{a}.   
\end{proof}
\begin{example} The unit disk $\Dl\ic\Cl$ has genus $p=2$ and $H^2_\ln(\Dl,\PL^n\Cl)$ consists of holomorphic maps 
$$\Dl\xx\Cl\xrightarrow{\lF}\Cl,\,(z,\lz)\mapsto\lF_z(\lz)$$  
such that $\lF_z(\lz)$ is a polynomial of degree $\le n$ in $\lz$. The action of $g=\bb abcd\in G=SU(1,1)$ is  
$$(g^{-\lb_\ln}\,\lF)_z(\lz)=(\dl_zg)^{(\ln+n)/2}\,(\dl_{z+\lz}g)^{-n/2}\,\lF_{g(z)}\,(g(z+\lz)-g(z))$$
$$=\f{(c(z+\lz)+d)^n}{(cz+d)^{\ln+n}}\,\lF_{g(z)}\,\(\f{\lz}{(c(z+\lz)+d)(cz+d)}\).$$
Since $\lF_{g(z)}$ is a polynomial of degree $\le n,$ this action yields again polynomials in $\lz$ of degree $\le n.$ 
\end{example}
Our first result concerning the cocycle \er{20} generalizes the canonical realization in terms of certain 'shift operators' in the $1$-dimensional case \cite{KM1}. We first define shift operators in our multi-dimensional setting. The Peter-Weyl decomposition \er{26} implies that any operator $T$ on $\PL^nZ$ is described by an operator matrix $T^\ll_\lm:\PL_\ll Z\to\PL_\lm Z,$ indexed by partitions $\ll,\lm\le n,$ via
$$(T\lf)_\lm=\S_{\ll\le n}T^\ll_\lm\,\lf_\ll\qquad\forall\,\lm\le n.$$
Using the partial ordering of partitions under the inclusion relation, we call the matrix $(T^\ll_\lm)$ {\bf lower-triangular} if the non-zero entries $T^\ll_\lm$ occur only for $\ll\ic\lm.$ The matrix will be called a {\bf shift} if the only nonzero entries are of the form $T^\ll_{\ll+\Le_i}$ for some $1\le i\le r,$ where $\Le_i$ is the $i$-th unit vector and $\ll+\Le_i$ is still a partition. By \cite{U1} these shift operators occur naturally in our setting: For fixed $w\in Z,$ the {\bf multiplication operator}
$$\lf(\lz)\mapsto (\lz|w)\,\lf(\lz)$$ 
and the {\bf Jordan differentiation operator} 
$$\lf(\lz)\mapsto(\dl_\lz\lf)Q_\lz w$$
are shift operators of this kind. Thus there exist linear operators $M^\ll_i(w),D^\ll_i(w)$ from $\PL_\ll Z$ to $\PL_{\ll+\Le_i}Z,$ depending anti-linearly on $w\in Z$ such that for all $\lf\in\PL_\ll Z$ we have
$$(\lz|w)\,\lf(\lz)=\S_{i=1}^r(M^\ll_i(w)\,\lf)(\lz),$$
$$(\dl_\lz\lf)Q_\lz w=\S_{i=1}^r(D^\ll_i(w)\,\lf)(\lz).$$
\begin{theorem}\label{d} For $g\in G$ the cocycle \er{20} has a factorization
$$[g]_z=(\dl_zg)\,B_{z,g^{-1}(0)}^{1/2}\,\exp S(g^{-1}(0))\,B_{z,g^{-1}(0)}^{-1/2}$$
where $\dl_zg$ and $B_{z,w}^{1/2}$ are diagonal operators, and $S(w)$ is the 'shift' operator matrix with non-zero entries 
$$S(w)^\ll_{\ll+\Le_i}=n\,M^\ll_i(w)-D^\ll_i(w).$$
\end{theorem}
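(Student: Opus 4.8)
The plan is to start from the explicit expression \er{20}, writing $[g]_z=(\dl_zg)\,\o\tL_{-w^z}$ with $w:=g^{-1}(0)$, and to pass to operators on $\PL^nZ$ via the spin representation $\lp_n$ of \er{22}. The factor $\dl_zg\in K^\Cl$ acts block-diagonally for the Peter-Weyl decomposition \er{26} and already stands as the leftmost factor on both sides, so the whole theorem reduces to showing that the operator by which the $P_+$-element $\o\tL_{-w^z}$ acts on $\PL^nZ$ coincides with $B^{1/2}_{z,w}\,(\exp S(w))\,B^{-1/2}_{z,w}$.

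First I would prove the purely group-theoretic identity
$$\o\tL_{-w^z}=B^{1/2}_{z,w}\,\o\tL_{-w}\,B^{-1/2}_{z,w}\qquad\text{in }G^\Cl.$$
Since $P_+$ is abelian, with $\o\tL_a\,\o\tL_b=\o\tL_{a+b}$, the right-hand side again lies in $P_+$, and the conjugation rule \er{29}, $h^{-1}\o\tL_v\,h=\o\tL_{h^*v}$, applied with $h=B^{-1/2}_{z,w}$ and $(B^{-1/2}_{z,w})^*=B^{-1/2}_{w,z}$, rewrites it as $\o\tL_{-B^{-1/2}_{w,z}w}$. Thus the identity is equivalent to the Jordan identity $w^z=B^{-1/2}_{w,z}\,w$, i.e. $w-Q_w\,z=B^{1/2}_{w,z}\,w$. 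Both sides are holomorphic in $w$ and anti-holomorphic in $z$, so by polarization it suffices to check the diagonal relation $w-Q_w\,w=B^{1/2}_{w,w}\,w$; writing $w=\S_j t_j\,c_j$ in a Jordan frame, this splits into the frame directions and reduces to the elementary componentwise relations $t_j-t_j^3=(1-t_j^2)\,t_j$.

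Next I would identify the operator representing $\o\tL_{-w}$ on $\PL^nZ$ with $\exp S(w)$. Since $t\mapsto\o\tL_{-tw}$ is a one-parameter subgroup of $P_+$ acting, through \er{22}, by $\lf(\lz)\mapsto\lD^n_{\lz,-tw}\,\lf(\lz^{-tw})$, and since $\lp_n$ is finite-dimensional, this operator equals $\exp(d\lp_n(X))$ with $X=\f{d}{dt}\big|_{t=0}\o\tL_{-tw}$. Using $\lz^{-tw}=\lz-t\,Q_\lz w+O(t^2)$ together with the first-order expansion $\lD^n_{\lz,-tw}=1+n\,t\,\EL^{\Le_1}_{\lz,w}+O(t^2)$ coming from \er{23}, I compute, for $\lf\in\PL_\ll Z$,
$$d\lp_n(X)\,\lf=n\,\EL^{\Le_1}_{\lz,w}\,\lf(\lz)-(\dl_\lz\lf)\,Q_\lz w=n(\lz|w)\,\lf(\lz)-(\dl_\lz\lf)\,Q_\lz w=\S_i\(n\,M^\ll_i(w)-D^\ll_i(w)\)\lf,$$
by $\EL^{\Le_1}_{\lz,w}=(\lz|w)$ and the very definitions of the multiplication and Jordan-differentiation shift operators. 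Hence $d\lp_n(X)=S(w)$ with the stated entries; since $d\lp_n(X)$ raises each partition by one box it is nilpotent on $\PL^nZ$, so the exponential is a finite sum and a genuine shift-type operator.

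Finally I would apply $\lp_n$ to the group identity of the second step: as $\lp_n$ is a representation it respects the ordered product, the $K^\Cl$-factors $\dl_zg$ and $B^{1/2}_{z,w}$ become the asserted diagonal operators, and $\o\tL_{-w}$ becomes $\exp S(w)$, which is exactly the claimed factorization. I expect the two genuine difficulties to be isolated here: the higher-rank Jordan identity $w^z=B^{-1/2}_{w,z}w$ of the second step, where the passage to the diagonal and the frame reduction must be done with care, and the sign-and-normalization bookkeeping of the third step, where the Jacobian determinant $\lD^n_{\lz,-tw}$ is responsible precisely for the term $n\,M^\ll_i(w)$, while the substitution $\lz\mapsto\lz^{-tw}$ produces $-D^\ll_i(w)$.
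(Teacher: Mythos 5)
Your proposal is correct and takes essentially the same approach as the paper: both proofs reduce, via \er{29} and the quasi-inverse identity $w^z=B_{w,z}^{-1/2}w$, to the group-level conjugation $\o\tL_{-w^z}=B_{z,w}^{1/2}\,\o\tL_{-w}\,B_{z,w}^{-1/2}$ with $w=g^{-1}(0)$, and then identify the operator by which $\o\tL_{-w}$ acts on $\PL^nZ$ as $\exp S(w)$ by computing the infinitesimal generator $\lf\mapsto n(\lz|w)\lf-(\dl_\lz\lf)Q_\lz w$. The only deviations are cosmetic: you prove the quasi-inverse identity by diagonal restriction and polarization where the paper quotes it from \cite{L}, and you obtain the generator by a first-order expansion of the explicit action (using \er{23}) rather than by differentiating \er{22} and using $tr(\dl_\lz Q_\lz w)=p(\lz|w)$.
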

\begin{proof} Since $w^{-z}=B^{-1/2}_{w,-z}w,$ we have $\o\tL_{w^{-z}}=\o\tL_{B^{-1/2}_{w,-z}w}=B^{1/2}_{z,-w}\,\o\tL_w\,B^{-1/2}_{z,-w},$ using \er{29}. Putting $w=-g^{-1}(0),$ we obtain with \er{20} the identity
$$[g]_z=(\dl_zg)\,B_{z,g^{-1}(0)}^{1/2}\,\o\tL_{-g^{-1}(0)}\,B_{z,g^{-1}(0)}^{-1/2}.$$
By \cite{L} the Lie algebra $\gL^\Cl$ of $G^\Cl$ consists of certain polynomial maps $\lg:Z\to Z$ of degree 
$\le 2.$ Using the quadratic representation \er{35} we have 
$$\o\tL_w=\exp(Q_\lz w)\qquad\forall\,w\in Z.$$
For the action \er{22}, this implies
$$\o\tL_w^{\lp_n}=\exp((Q_\lz w)^{\d\lp_n}),$$
where $\d\lp_n$ denotes the infinitesimal action of the Lie algebra $\gL^\Cl.$ Thus it suffices to compute $(Q_\lz w)^{\d\lp_n}.$  Differentiating \er{22} yields
$$\lg^{\d\lp_n}\lf(\lz)=(\dl_\lz\lf)\lg(\lz)-\f np\,tr(\dl_\lz\lg)\,\lf(\lz).$$
Since $tr(\dl_\lz\,Q_\lz w)=2\,tr\,\lz\Box w^*=p(\lz|w)$ we obtain
$$((Q_\lz w)^{\d\lp_n}\lf)(\lz)=(\dl_\lz\lf)Q_\lz w-n(\lz|w)\,\lf(\lz).$$
For $\lf\in\PL_\ll Z$ it follows that
$$((Q_\lz w)^{\d\lp_n}\lf)(\lz)=\S_{i=1}^r(D_\ll^i(w)-n\,M_\ll^i(w))\lf$$
with $(D^\ll_i(w)-n\,M^\ll_i(w))\lf\in\PL_{\ll+\Le_i}Z.$
\end{proof}

\section{Intertwining operators}
For a deeper study of the 'big' Hilbert space, e.g. the classification of covariant reproducing kernels, we need its decomposition into 
the $\b{n+r}{r}$ inequivalent isotypic components under the representation \er{24}. For each partition $\ll\le n,$ we will describe the associated components via {\bf intertwining operators} which are unique up to a constant multiple. Related problems are also studied in \cite{PZ} under the aspect of decomposition of tensor products of $G$-representations. We will describe these intertwiners both in terms of integral and differential operators.
\begin{theorem} For each partition $\ll\le n$ an intertwining operator $\IL_\ll:H^2_\ln(D,\PL_\ll Z)\to H^2_\ln(D,\PL^nZ)$
is given by
\be{32}(\IL_\ll\lF)_z(\lz)=\I_{D}d\lm(x)\,\f{\lD_{x,x}^\ln\,\lD_{z+\lz,x}^n}{\lD_{z,x}^{\ln+n}}\,\lF_x(B_{x,x}((z+\lz)^x-z^x)).\ee
\end{theorem}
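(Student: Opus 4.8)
The plan is to verify directly that the operator $\IL_\ll$ defined in \er{32} maps into the big Hilbert space and intertwines the two representations $g^{-\la_\ln}$ on $H^2_\ln(D,\PL_\ll Z)$ (given by \er{19}) and $g^{-\lb_\ln}$ on $H^2_\ln(D,\PL^nZ)$ (given by \er{24}). The natural strategy is to show that the integral kernel
$$
(z,\lz;x)\mapsto\f{\lD_{x,x}^\ln\,\lD_{z+\lz,x}^n}{\lD_{z,x}^{\ln+n}}\,\lF_x\(B_{x,x}((z+\lz)^x-z^x)\)
$$
is \emph{covariant} with respect to the $G$-action, so that the intertwining relation $\IL_\ll\,g^{-\la_\ln}=g^{-\lb_\ln}\,\IL_\ll$ reduces to the $G$-invariance of the measure $d\lm$ in \er{15} together with the transformation laws \er{9}, \er{10} for $B_{z,w}$ and $\lD_{z,w}$. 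First I would fix $g\in G$ and compute $(\IL_\ll(g^{-\la_\ln}\lF))_z(\lz)$ by substituting the definition of $g^{-\la_\ln}\lF$ into \er{32}. This produces a factor $\det(\dl_xg)^{\ln/p}$ inside the integrand acting on $\lF$, which combines with $\lD_{x,x}^\ln$ via \er{10}; I would then perform the change of variables $x\mapsto g(x)$, absorbing the Jacobian using the $G$-invariance of $d\lm$.

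The crucial algebraic identities to assemble are the behaviour of the three $\lD$-factors and the quasi-inverse expression $(z+\lz)^x-z^x$ under $g$. For the denominator I expect to use a cocycle-type identity $\lD_{gz,gx}=\det(\dl_zg)^{1/p}\,\lD_{z,x}\,\o{\det(\dl_xg)}^{1/p}$, the sesqui-holomorphic analogue of \er{10}, applied at the pairs $(z,x)$, $(z+\lz,x)$; I would need the holomorphic (rather than real-analytic) version, valid on $D\xx D$ by analytic continuation of \er{10}. For the argument of $\lF_x$, the key step is a transformation law for the quasi-inverse difference $(z+\lz)^x-z^x$ under the simultaneous action of $g$; this should follow from the transvection/addition formulas \er{50}, \er{51} and the covariance \er{9} of the Bergman operator, expressing $B_{gx,gx}\((g(z)+\text{shift})^{gx}-g(z)^{gx}\)$ in terms of $(\dl_xg)$ and $B_{x,x}((z+\lz)^x-z^x)$ so that the inner $\lF_x$ is evaluated at the correctly transformed point. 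The diagonal factors $\det(\dl_zg)^{(\ln+n)/p}$ and $\det(\dl_{z+\lz}g)^{-n/p}$ appearing in \er{24} must emerge precisely from these Jacobian and $\lD$-factors after collecting terms.

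The main obstacle I anticipate is bookkeeping the fractional powers $\det(\dl g)^{\ln/p}$, $\det(\dl g)^{n/p}$ and the half-integer Bergman powers $B^{1/2}$, $\lD^{n/2}$ consistently, since these are only defined projectively (up to constants depending on $g$) when $\ln/p$ or $n/p$ is not an integer, exactly as flagged in the proof of Proposition~\er{a}. The cocycle \er{20} lives in $K^\Cl P_+$ and its representation on $\PL^nZ$ carries the genuine factor $\det(\dl_{z+\lz}g)^{-n/p}$ from the spin representation \er{22}; matching this against the kernel's $\lD_{z+\lz,x}^n/\lD_{z,x}^{\ln+n}$ dependence is where the computation is most delicate. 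A secondary point is convergence: one must confirm that for $\ln$ large the integral \er{32} converges and lands in $H^2_\ln(D,\PL^nZ)$, which I would handle by the same estimate guaranteeing that polynomials lie in the little space, using that $\lD_{z+\lz,x}^n\lF_x(\cdots)$ is polynomial in $(z,\lz)$ of controlled degree. Once covariance of the kernel is established, the intertwining identity is immediate and uniqueness up to scalar follows from Schur's lemma applied to the irreducible component $\PL_\ll Z$, invoking the multiplicity-freeness of the Peter-Weyl decomposition \er{26}.
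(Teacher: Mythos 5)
Your proposal follows essentially the same route as the paper's proof: direct verification that the kernel in \er{32} is covariant, using the $G$-invariance of $d\lm$, the covariance laws for $\lD$ and $B$ derived from \er{9} and \er{10}, and a transformation law for the quasi-inverse difference $(z+\lz)^x-z^x$, which the paper obtains from the addition formula $(u+v)^x=u^x+B_{u,x}^{-1}(v^{(x^u)})$ of Loos rather than from \er{50}--\er{51}. The only organizational difference is that the paper first observes $\IL_\ll$ is $K$-invariant, so the intertwining relation need only be checked for the transvections $\gl_y$, whose explicit cocycle makes the fractional-power bookkeeping you flag as the main obstacle concrete and manageable.
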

\begin{proof} Since $\IL_\ll$ is $K$-invariant, it suffices to consider the transvections $\gl_y$ with $y\in D.$ For $z,x\in D$ we have
$$B_{y,y}^{1/2}z^{\gl_y(x)}=B_{y,y}^{1/2}z^{y+B_{y,y}^{1/2}x^{-y}}=B_{y,y}^{1/2}(z^y)^{B_{y,y}^{1/2}x^{-y}}
=(B_{y,y}^{1/2}z^y)^{B_{y,y}^{-1/2}B_{y,y}^{1/2}x^{-y}}=(B_{y,y}^{1/2}z^y)^{(x^{-y})}.$$
Applying the 'addition formula' \cite{L}
$$(u+v)^x=u^x+B_{u,x}^{-1}(v^{(x^u)})$$
to $u=-y$ and $v=B_{y,y}^{1/2}z^y$ it follows that
$$B_{y,-x}^{-1}B_{y,y}^{1/2}z^{\gl_y(x)}=B_{y,-x}^{-1}((B_{y,y}^{1/2}z^y)^{(x^{-y})})=((B_{y,y}^{1/2}z^y-y)^x-(-y)^x=
(\gl_{-y}(z))^x-(-y)^x.$$
As a consequence we obtain
$$\gl_{-y}(z+\lz)^x-\gl_{-y}(z)^x=B_{y,-x}^{-1}B_{y,y}^{1/2}((z+\lz)^{\gl_y(x)}-z^{\gl_y(x)}).$$
On the 'big' Hilbert space the action is 
$$(\gl_y^{\lb_\ln}\lQ)_z(\lz)=\f{\lD_{y,y}^{\ln/2}\,\lD_{z+\lz,y}^n}{\lD_{z,y}^{\ln+n}}\,\lQ_{\gl_{-y}(z)}(\gl_{-y}(z+\lz)-\gl_{-y}(z)).$$
Therefore
$$(\gl_y^{\lb_\ln}\IL_\ll\lF)_z(\lz)=\f{\lD_{y,y}^{\ln/2}\,\lD_{z+\lz,y}^n}{\lD_{z,y}^{\ln+n}}\,(\IL_\ll\lF)_{\gl_{-y}(z)}
(\gl_{-y}(z+\lz)-\gl_{-y}(z))$$
$$=\f{\lD_{y,y}^{\ln/2}\,\lD_{z+\lz,y}^n}{\lD_{z,y}^{\ln+n}}\,\I_{D}d\lm(x)\,
\f{\lD_{x,x}^\ln\,\lD_{\gl_{-y}(z+\lz),x}^n}{\lD_{\gl_{-y}(z),x}^{\ln+n}}\,(\lF_x\circ B_{x,x})(\gl_{-y}(z+\lz)^x-\gl_{-y}(z)^x)$$
$$=\f{\lD_{y,y}^{\ln/2}\,\lD_{z+\lz,y}^n}{\lD_{z,y}^{\ln+n}}\,\I_{D}d\lm(x)\,\f{\lD_{x,x}^\ln\,\lD_{\gl_{-y}(z+\lz),x}^n}{\lD_{\gl_{-y}(z),x}^{\ln+n}}\,(\lF_x\circ B_{x,x}B_{y,-x}^{-1}B_{y,y}^{1/2})((z+\lz)^{\gl_y(x)}-z^{\gl_y(x)}).$$
On the 'little' Hilbert space the action is
$$(\gl_y^{\la_\ln}\lF)_z=\f{\lD_{y,y}^{\ln/2}}{\lD_{z,y}^\ln}\,\lF_{\gl_{-y}(z)}\circ B_{y,y}^{1/2}B_{z,y}^{-1}.$$
Equivalently,
$$(\gl_y^{\la_\ln}\lF)_{\gl_y(x)}=\f{\lD_{x,-y}^\ln}{\lD_{y,y}^{\ln/2}}\,\lF_x\circ B_{x,-y}B_{y,y}^{-1/2}.$$
Since $\lm$ is $G$-invariant it follows that
$$(\IL_\ll\,\gl_y^{\la_\ln}\lF)_z(\lz)=\I_{D}d\lm(x')\,\f{\lD_{x',x'}^\ln\,\lD_{z+\lz,x'}^n}{\lD_{z,x'}^{\ln+n}}\,
((\gl_y^{\la_\ln}\lF)_{x'}\circ B_{x',x'})((z+\lz)^{x'}-z^{x'})$$
$$=\I_{D}d\lm(x)\,\f{\lD_{\gl_y(x),\gl_y(x)}^\ln\,\lD_{z+\lz,\gl_y(x)}^n}{\lD_{z,\gl_y(x)}^{\ln+n}}\,
((\gl_y^{\la_\ln}\lF)_{\gl_y(x)}\circ B_{\gl_y(x),\gl_y(x)})((z+\lz)^{\gl_y(x)}-z^{\gl_y(x)})$$
$$=\I_{D}d\lm(x)\,\f{\lD_{\gl_y(x),\gl_y(x)}^\ln\,\lD_{x,-y}^\ln\,\lD_{z+\lz,\gl_y(x)}^n}{\lD_{y,y}^{\ln/2}\,\lD_{z,\gl_y(x)}^{\ln+n}}
(\lF_x\circ B_{x,-y}B_{y,y}^{-1/2}B_{\gl_y(x),\gl_y(x)})((z+\lz)^{\gl_y(x)}-z^{\gl_y(x)}).$$
Thus, in order to verify the intertwining property, we must show that the terms involving the quasi-determinant and the terms involving the Bergman operators match. As a consequence of \er{10} we have
$$\lD_{\gl_y(u),\gl_y(v)}=\lD_{y,y}\lD_{u,-y}^{-1}\lD_{u,v}\lD_{y,-v}^{-1}.$$ 
As special cases we obtain
$$\lD_{\gl_y(x),\gl_y(x)}=\lD_{y,y}\lD_{x,-y}^{-1}\lD_{x,x}\lD_{y,-x}^{-1},$$
$$\lD_{z,\gl_y(x)}=\lD_{y,y}\lD_{\gl_{-y}(z),-y}^{-1}\lD_{\gl_{-y}(z),x}\lD_{y,-x}^{-1}=\lD_{z,y}\lD_{\gl_{-y}(z),x}\lD_{y,-x}^{-1},$$
$$\lD_{z+\lz,\gl_y(x)}=\lD_{y,y}\lD_{\gl_{-y}(z+\lz),-y}^{-1}\lD_{\gl_{-y}(z+\lz),x}\lD_{y,-x}^{-1}
=\lD_{z+\lz,y}\lD_{\gl_{-y}(z+\lz),x}\lD_{y,-x}^{-1}.$$
It follows that
$$\lD_{y,y}^{\ln/2}\,\lD_{z,y}^{-\ln-n}\,\lD_{z+\lz,y}^n\,\lD_{x,x}^\ln\,\lD_{\gl_{-y}(z),x}^{-\ln-n}\,\lD_{\gl_{-y}(z+\lz),x}^n
=\lD_{y,y}^{-\ln/2}\,\lD_{\gl_y(x),\gl_y(x)}^\ln\,\lD_{x,-y}^\ln\,\lD_{z,\gl_y(x)}^{-\ln-n}\,\lD_{z+\lz,\gl_y(x)}^n.$$
As a consequence of \er{9} we have 
$$B_{\gl_y(u),\gl_y(v)}=B_{y,y}^{1/2}B_{u,-y}^{-1}B_{u,v}B_{y,-v}^{-1}B_{y,y}^{1/2}$$ 
and hence $B_{x,x}B_{y,-x}^{-1}B_{y,y}^{1/2}=B_{x,-y}B_{y,y}^{-1/2}B_{\gl_y(x),\gl_y(x)}.$
\end{proof}
We will now show that the intertwiners $\IL_\ll$ can also be expressed in terms of {\bf differential} operators, which for the unit ball in $\Cl^d$ can be made very explicit. The following theorem is a far-reaching generalization of the 'jet construction' for the unit disk \cite{KM1}.
\begin{theorem}\label{f} For each $\lq\in\PL_\ll Z$ and $\lz\in Z$ the function
$$F_\lq(\lz,w):=\I_{D}d\lm(x)\,\lD_{x,x}^\ln\,e^{(x|w)}\,\lD_{\lz,x}^n\,\lq(B_{x,x}\lz^x)$$
is a (conjugate) polynomial in $w\in Z,$ and the associated constant coefficient holomorphic differential operator, in the variable 
$z,$ is the intertwining operator
$$(\IL_\ll(f\xt\lq))_z(\lz)=(F_\lq(\lz,\dl)f)(z).$$
Here we write $\lF\in H^2_\ln(D,\PL_\ll Z)$ as a (finite sum) of terms $\lF_z(\lz)=f(z)\lq(\lz),$ where $f\in\OL(D)$ and 
$\lq\in\PL_\ll Z.$ 
\end{theorem}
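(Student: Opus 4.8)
The plan is to evaluate both operators on the total family of \emph{coherent states} $k_v\in\OL(D)$, $k_v(x):=e^{(x|v)}$ ($v\in Z$). Since $F_\lq(\lz,\dl)$ is a constant-coefficient holomorphic differential operator in $z$ and $\dl_{z_i}k_v=\o v_i\,k_v$, it acts on $k_v$ by substituting $w=v$ into its symbol, i.e. $F_\lq(\lz,\dl)\,k_v=F_\lq(\lz,v)\,k_v$. Hence the theorem is equivalent to the single identity
\be{cohe}(\IL_\ll(k_v\xt\lq))_z(\lz)=F_\lq(\lz,v)\,e^{(z|v)}\qquad(v\in Z,\ z\in D,\ \lz\in Z),\ee
which, once known for every $v$, extends to all $f$ in the dense span of the $k_v$ (in particular to all polynomial $f$) by linearity, and then to $H^2_\ln(D,\PL_\ll Z)$ by continuity of $\IL_\ll$, giving $(\IL_\ll(f\xt\lq))_z(\lz)=(F_\lq(\lz,\dl)f)(z)$. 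Putting $z=0$ in \er{32} and using $\lD_{0,x}=1$ and $0^x=0$ shows $(\IL_\ll(k_v\xt\lq))_0(\lz)=F_\lq(\lz,v)$; thus \er{cohe} already holds at $z=0$ and simultaneously identifies the symbol with the stated integral.

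To ensure $F_\lq(\lz,\dl)$ is a genuine finite-order operator, I would show that $F_\lq(\lz,w)=(\IL_\ll(k_w\xt\lq))_0(\lz)$ is a conjugate polynomial in $w$ whose coefficients are polynomials of degree $\le n$ in $\lz$ (i.e. lie in $\PL^nZ$). Expanding $e^{(x|w)}=\S_\lm\EL^\lm_{x,w}$ by \er{60}, the contribution of $\PL_\lm Z$ is $\S_m\o{\lf_\lm^m(w)}\I_{D}d\lm(x)\,\lD_{x,x}^\ln\,\lf_\lm^m(x)\,\lD_{\lz,x}^n\,\lq(B_{x,x}\lz^x)$. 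Since $\ll\le n$, the factor $\lD_{\lz,x}^n$ clears the quasi-inverse denominators hidden in $\lq(B_{x,x}\lz^x)$, so the integrand is a polynomial whose antiholomorphic degree in $x$ is bounded independently of $\lm$, the Bergman factors $B_{x,x}$ contributing only $K$-balanced terms in $x,\o x$. The $K$-invariance of $d\lm$ then forces the integral to vanish once $|\lm|$ exceeds that bound, leaving a finite sum. This is a routine degree-and-weight count.

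It remains to verify \er{cohe} for all $z$, equivalently that $\lQ_v(z):=(\IL_\ll(k_v\xt\lq))_z(\lz)\,e^{-(z|v)}$ is independent of $z$, for then $\lQ_v(z)=\lQ_v(0)=F_\lq(\lz,v)$. Writing the measure as $d\lm(x)\,\lD_{x,x}^\ln=c\,\lD_{x,x}^{\ln-p}\,dx$ by \er{15} and differentiating in $z$, one uses $\o v_i\,e^{(x|v)}=\dl_{x_i}e^{(x|v)}$ and integrates by parts in $x$; the boundary terms vanish because $\lD_{x,x}^{\ln-p}$ vanishes to high order on $\dl D$ for $\ln$ large. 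This reduces $\dl_{z_i}\lQ_v\equiv0$ to the assertion that a fixed first-order combination of $z$- and $x$-derivatives of the kernel $\lD_{z+\lz,x}^n\,\lD_{z,x}^{-\ln-n}\,\lq(B_{x,x}((z+\lz)^x-z^x))$ has vanishing holomorphic projection against the weight. (Note that the pointwise combination need not vanish; only its projection onto $\OL(D)$ does.) This integrated identity is the higher-rank analogue of a hypergeometric identity that one checks by hand in the rank-one disk case.

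The main obstacle is precisely this last integrated identity: unlike the finite-order count it is genuinely analytic and encodes the curvature of $D$ through the covariance formulas \er{9}, \er{10} for $B$ and $\lD$ and the quasi-inverse addition formula already exploited for the integral intertwiner. I expect the most efficient route is to sidestep the integration by parts, reducing \er{cohe} to the origin via the transvection-covariance of $\IL_\ll$ and the derivative identity $\dl_z\gl_x=B^{1/2}_{x,x}B^{-1}_{z,-x}$ of \er{17}, together with the Peter--Weyl shift structure of Theorem \ref{d}. Conceptually, \er{cohe} expresses that $\IL_\ll$ carries the coherent state $k_v$---an eigenvector of the lowering operators of the little module---to a coherent state of the big module, which is forced as soon as $\IL_\ll$ is known to intertwine the module action.
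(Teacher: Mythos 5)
Your overall skeleton is sound and in fact matches the tail end of the paper's own argument: reducing the theorem to the identity $(\IL_\ll(k_v\xt\lq))_z(\lz)=F_\lq(\lz,v)\,e^{(z|v)}$ on exponentials, observing that setting $z=0$ in \er{32} both verifies it at the origin and produces the integral formula for the symbol, and checking polynomiality of $F_\lq(\lz,w)$ in $\o w$ by a Peter--Weyl degree count. But there is a genuine gap, and it sits exactly where you flag ``the main obstacle'': nothing in your proposal proves the identity for $z\ne 0$, i.e.\ nothing shows that $\IL_\ll$ actually acts as a \emph{constant-coefficient} operator in $z$. Your integration-by-parts route is not carried out (it is reduced to an ``integrated identity'' that you do not verify), and your conceptual fallback rests on an equivocation. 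If ``intertwines the module action'' means the multiplier action $f^\oc$, the premise is false: by the lemmas of Section 5, an operator commuting with all holomorphic multipliers acts fibrewise, $(A\lF)_z=A_z\lF_z$, whereas $\IL_\ll$ is a differential operator of positive order in $z$. If instead it means the action of the lowering operators of $\gL^\Cl$, then the assertion that the little and the big space see these operators \emph{in the same way} (namely as plain $z$-differentiation) is precisely the unproven core of the theorem, not a formal consequence of $G$-equivariance of $\IL_\ll$.

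The paper closes this gap by an infinitesimal computation that your proposal never performs. Differentiating the group intertwining relation gives $\lg^{\d\lb_\ln}\,\IL_\ll=\IL_\ll\,\lg^{\d\la_\ln}$ for $\lg\in\gL$ on smooth vectors, and for the infinitesimal transvections $\c v=(v-Q_zv)^\dl$ one computes explicitly
$$\c v^{\d\la_\ln}=\ln(z|v)+(Q_zv)^\dl-v^\dl+2\{zv^*\lz\}^\ld,\qquad
\c v^{\d\lb_\ln}=\ln(z|v)-n(\lz|v)+(Q_zv)^\dl-v^\dl+2\{zv^*\lz\}^\ld+(Q_\lz v)^\ld.$$
Every term on both sides is conjugate-linear in $v$ \emph{except} the common term $-v^\dl$, which is $\Cl$-linear. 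Evaluating the intertwining relation at $v$ and at $iv$ and extracting the $\Cl$-linear part therefore yields
$$v^\dl\,\IL_\ll(f\xt\lq)=\IL_\ll((v^\dl f)\xt\lq)\qquad\forall\,v\in Z,$$
i.e.\ $\IL_\ll$ commutes with all constant $z$-derivatives. This is the single fact your argument needs: it makes $\IL_\ll(k_v\xt\lq)$ a joint eigenvector of the operators $u^\dl$ with the same eigenvalues as $k_v$, hence of the form $C(\lz,v)\,e^{(z|v)}$, and your evaluation at $z=0$ identifies $C=F_\lq(\lz,v)$. Without this computation (or a genuine substitute for it), your proposal establishes the theorem only at the single point $z=0$.
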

\begin{proof} The Lie algebra $\gL=aut(D)$ of $G$ consists of all completely integrable holomorphic vector fields $\lg:D\to Z,$ realized as holomorphic differential operators
$$(\lg^\dl\,f)(z):=(\dl_zf)\,\lg(z)$$
on holomorphic functions $f:D\to\Cl.$ Then the commutation relation
$$\[\lx^\dl,\lh^\dl\]=(\lx^\dl\,\lh-\lh^\dl\,\lx)^\dl$$
holds. For $\lg\in\gL^\Cl$ consider the {\bf infinitesimal actions} associated with \er{19} and \er{24}, resp. The intertwining property 
$$\lg^{\d\lb_\ln}\,\IL_\ll=\IL_\ll\,\lg^{\d\la_\ln}$$ 
is valid on the dense subspace of smooth vectors in $H^2_\ln(D,\PL_\ll Z).$ On the 'little' Hilbert space the infinitesimal action, as a function of $(z,\lz),$ has the form
$$-\lg^{\d\la_\ln}\lF=\f{\ln}p\,tr(\dl_z\lg)\cdot\lF+(\lg(z))^\dl\,\lF+((\dl_z\lg)\lz)^\ld\lF.$$
Here $()^\dl$ denotes differentiation in the $z$-variable and $()^\ld$ denotes differentiation in the $\lz$-variable. On the 'big' Hilbert space the infinitesimal action, as a function of $(z,\lz),$ has the form
$$-\lg^{\d\lb_\ln}\lF=(\f{\ln+n}p\,tr(\dl_z\lg)-\f np\,tr(\dl_{z+\lz}\lg)\cdot\lF+(\lg(z))^\dl\,\lQ+(\lg(z+\lz)-\lg(z))^\ld\lF.$$
There is a Cartan decomposition $\c{\gL}=\kL\oplus\pL,$ where $\kL$ is the Lie algebra of all Jordan triple derivations (identified as linear vector fields) and $\pL$ consists of all {\bf infinitesimal transvections}
$$\c v=(v-Q_zv)^\dl$$
for $v\in Z$. We have $\dl_z\c v=-2z\Box v^*$ and hence
$$tr(\dl_z\c v)=-2\,tr(z\Box v^*)=-p(z|v).$$  
It follows that
$$\c v^{\d\la_\ln}=\ln(z|v)+(Q_zv)^\dl-v^\dl+2\{zv^*\lz\}^\ld$$  
on the little Hilbert space. Since
$$\c v_{z+\lz}-\c v_z=Q_zv-Q_{z+\lz}v=-2\{zv^*\lz\}-Q_\lz v$$
we obtain similarly
$$\c v^{\d\lb_\ln}=\ln(z|v)-n(\lz|v)+(Q_zv)^\dl-v^\dl+2\{zv^*\lz\}^\ld+(Q_\lz v)^\ld$$
for the big Hilbert space. Here $(z|v)$ and $(\lz|v)$ denote the multiplication operators by $(z|v)$ and $(\lz|v),$ resp. In both cases the infinitesimal action has the same $\Cl$-linear part, namely $-v^\dl.$ It follows that
$$v^\dl\IL_\ll(f\xt\lq)=\IL_\ll((v^\dl f)\xt\lq)$$
for all $f\in\PL Z$ and all $v\in Z.$ Thus 
$$(\IL_\ll(f\xt\lq))_z(\lz)=(F_\lq(\lz,\dl)f)(z)$$ 
is a constant coefficient holomorphic differential operator in the $z$-variable. Its symbol $F_\lq(\lz,w),$ as a (conjugate) polynomial in $w\in Z,$ is determined by the property
$$F_\lq(\lz,w)\,e^{(z|w)}=(F_\lq(\lz,\dl)e^{(-|w)})(z)=(\IL_\ll(e^{(-|w)}\xt\lq))_z(\lz)
=\I_{D}d\lm(x)\,\f{\lD_{x,x}^\ln\,\lD_{z+\lz,x}^n}{\lD_{z,x}^{\ln+n}}\,e^{(x|w)}\,\lq(B_{x,x}((z+\lz)^x-z^x)).$$
Putting $z=0$ we obtain
$$F_\lq(\lz,w)=\I_{D}d\lm(x)\,\lD_{x,x}^\ln\,e^{(x|w)}\,\lD_{\lz,x}^n\,\lq(B_{x,x}\lz^x).$$
\end{proof}
We will now specialize to the rank $1$ case of the unit ball $D$ in $Z=\Cl^d.$ Define
$$\lz^\dl=(\lz|\o\dl):=\S_{i=1}^d(\lz|u_i)u_i^\dl,$$
where $u_i$ is any orthonormal basis of $Z.$ 
\begin{theorem} For the unit ball $D$ and each $0\le\ll\le n$ the operator
$$\IL_\ll=\S_{k=0}^{n-\ll}\f{(-1)^k}{(2\ll+\ln)_k}\b{n-\ll}{k}(\lz|\o\dl)^k
=\S_{k=0}^{n-\ll}\f{(\ll-n)_k}{(2\ll+\ln)_k}\f{(\lz|\o\dl)^k}{k!}=\S_{k=0}^{n-\ll}\f{(\ll-n)_k}{(2\ll+\ln)_k}\EL^k(\lz,\o\dl)$$
is an intertwiner $H^2_\ln(D,\PL_\ll Z)\to H^2_\ln(D,\PL^nZ).$
\end{theorem}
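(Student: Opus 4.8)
The plan is to specialize the differential-operator description of $\IL_\ll$ from Theorem \ref{f} to the rank one case and to evaluate the resulting symbol $F_\lq(\lz,w)$ explicitly. For the unit ball $Z=\Cl^d$ one has genus $p=d+1$, quasi-determinant $\lD_{z,w}=1-(z|w)$, Bergman operator $B_{z,w}v=(1-(z|w))(v-(v|w)z)$ and quasi-inverse $z^w=z/(1-(z|w))$; moreover $\PL_\ll Z$ is the space of homogeneous polynomials of degree $\ll$, whose reproducing kernel is $\EL^\ll_{\lz,\lo}=(\lz|\lo)^\ll/\ll!$ by \er{60}. Substituting these data into the symbol formula of Theorem \ref{f} and using the homogeneity of $\lq$ to extract the scalar $\(\f{1-(x|x)}{1-(\lz|x)}\)^\ll$ from $\lq(B_{x,x}\lz^x)$, I would first bring the symbol to the form
$$F_\lq(\lz,w)=\I_D d\lm(x)\,(1-(x|x))^{\ln+\ll}\,e^{(x|w)}\,(1-(\lz|x))^{n-\ll}\,\lq(\lz-(\lz|x)x).$$

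Next I would expand $e^{(x|w)}=\S_m(x|w)^m/m!$ and use the invariance of $d\lm$ under the scalar circle $x\mapsto e^{i\lt}x$ contained in $K$. The vector $(\lz|x)x$, hence also $\lz-(\lz|x)x$ and $\lq(\lz-(\lz|x)x)$, is invariant under this circle, whereas $(x|w)^m$ has weight $e^{im\lt}$ and the binomial expansion $(1-(\lz|x))^{n-\ll}=\S_j(-1)^j\b{n-\ll}{j}(\lz|x)^j$ has weights $e^{-ij\lt}$. Averaging over $\lt$ therefore kills all terms except $j=m$, leaving
$$F_\lq(\lz,w)=\S_{m=0}^{n-\ll}\f{(-1)^m}{m!}\b{n-\ll}{m}\I_D d\lm(x)\,(1-(x|x))^{\ln+\ll}\,(x|w)^m(\lz|x)^m\,\lq(\lz-(\lz|x)x).$$

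To evaluate the remaining integral $I_m$ it suffices, since the polynomials $(\cdot|a)^\ll$ span the irreducible module $\PL_\ll Z$, to take $\lq=(\cdot|a)^\ll$; then $\lq(\lz-(\lz|x)x)=\S_{s=0}^\ll(-1)^s\b{\ll}{s}(\lz|a)^{\ll-s}(\lz|x)^s(x|a)^s$. Each summand pairs the holomorphic polynomial $R=(x|w)^m(x|a)^s$ against $\o{(x|\lz)^{m+s}}$, so the weighted Bergman identity $\I_D d\lm(x)(1-(x|x))^{\ln+\ll}\,\o P R=A\,(\ln+\ll)_{m+s}^{-1}(P|R)_F$ on $\PL_{m+s}Z$ (with $A$ depending only on $\ln+\ll$) together with the Fischer reproducing property $((\cdot|\lz)^{m+s}|R)_F=(m+s)!\,R(\lz)$ gives a constant multiple of $(m+s)!\,(\ln+\ll)_{m+s}^{-1}(\lz|w)^m(\lz|a)^s$. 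Summing over $s$ collapses $(\lz|a)^{\ll-s}(\lz|a)^s=(\lz|a)^\ll=\lq(\lz)$, so $I_m=\lq(\lz)\,(\lz|w)^m\,S_m$ with $S_m=\S_{s=0}^\ll(-1)^s\b{\ll}{s}(m+s)!/(\ln+\ll)_{m+s}$ up to the $m$-independent constant $A$. The key closed evaluation is then a Beta-integral identity: writing $(m+s)!/(\ln+\ll)_{m+s}=(\ln+\ll-1)\I_0^1 t^{m+s}(1-t)^{\ln+\ll-2}\,dt$ and summing $\S_s(-1)^s\b{\ll}{s}t^s=(1-t)^\ll$ yields
$$S_m=(\ln+\ll-1)\I_0^1 t^m(1-t)^{\ln+2\ll-2}\,dt=\f{\ln+\ll-1}{\ln+2\ll-1}\,\f{m!}{(2\ll+\ln)_m}.$$

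Finally I would assemble the pieces. The symbol becomes $F_\lq(\lz,w)=\lq(\lz)\,\S_m\f{\t c_m}{m!}(\lz|w)^m$ with $\t c_m=(-1)^m\b{n-\ll}{m}S_m$, and since $(-1)^m\b{n-\ll}{m}m!=(\ll-n)_m$ the factor above shows $\t c_m$ is proportional, by an $m$-independent constant, to $(\ll-n)_m/(2\ll+\ln)_m$. As intertwiners are unique up to a scalar I normalize $\t c_0=1$; the associated constant-coefficient holomorphic operator in $z$ is $\S_m\t c_m(\lz|\o\dl)^m/m!=\S_m\t c_m\EL^m(\lz,\o\dl)$, and applied to $\lF_z(\lz)=f(z)\lq(\lz)$ it reproduces the asserted formula, the factor $\lq(\lz)$ being untouched because $\o\dl$ differentiates only in $z$. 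I expect the main obstacle to be the bookkeeping in the integral: showing that the surviving contribution is forced into the equivariant shape $\lq(\lz)(\lz|w)^m$ and reduces to $S_m$, and then checking that the weight-dependent Bergman constant $A$ and the prefactor $(\ln+\ll-1)/(\ln+2\ll-1)$ are independent of $m$, so that both disappear after the normalization $\t c_0=1$ and leave exactly the Pochhammer ratio.
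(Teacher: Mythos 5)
Your proposal is correct and follows essentially the same route as the paper's own proof: both specialize the symbol formula of Theorem \ref{f} to the ball, reduce $\Delta^n_{\zeta,x}\,\psi(B_{x,x}\zeta^x)$ to $(1-(x|x))^{\lambda}(1-(\zeta|x))^{n-\lambda}\psi(\zeta-(\zeta|x)x)$ (shifting the weight $\nu\mapsto\nu+\lambda$), expand binomially with $\psi=(\cdot|a)^{\lambda}$, and evaluate the resulting integrals via the weighted-Bergman/Fock pairing with its Pochhammer denominators. Your circle-averaging step is exactly the paper's implicit degree-matching (replacing $e^{(w|b)}$ by $(w|b)^k/k!$), and your Beta-integral evaluation of $S_m$ is just a self-contained proof of the Gamma-quotient identity the paper cites, so the two arguments coincide up to presentation.
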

\begin{proof} Consider the probability measure
$$d\lm_\ln(x)=c_\ln\,(1-(x|x))^{\ln-d-1}\,dx$$
on $D,$ for parameter $\ln>d=p-1.$ Then
$$\I_D d\lm_{\ln+\ll}(w)\,e^{(w|b)}\,(\lz|w)^{k+\el}(w|a)^\el=\I_D d\lm_{\ln+\ll}(w)\,\f{(w|b)^k}{k!}\,(\lz|w)^{k+\el}(w|a)^\el$$
$$=\f1{(\ln+\ll)_{k+\el}}\((w|\lz)^{k+\el}\|\f{(w|b)^k}{k!}(w|a)^\el\)_F 
=\f1{(\ln+\ll)_{k+\el}}\((a^\dl)^\el(w|\lz)^{k+\el}\|\f{(w|b)^k}{k!}\)_F$$
$$=\f{(k+1)_\el}{(\ln+\ll)_{k+\el}}(\lz|a)^\el\((w|\lz)^k\|\f{(w|b)^k}{k!}\)_F=\f{(k+1)_\el}{(\ln+\ll)_{k+\el}}(\lz|a)^\el(\lz|b)^k.$$
The Jordan triple product on $Z=\Cl^d$ is given by
$$2\{uv^*w\}=(u|v)w+(w|v)u.$$
Realizing $\Cl^d=\Cl^{1\xx d}$ via row vectors, the Bergman operator is
$B_{w,w}\lz=(1-ww^*)\lz(1-w^*w)=(1-(w|w))(\lz-(\lz|w)w).$ The quasi-inverse is
$$\lz^w=(1-\lz w^*)^{-1}\lz=\f{\lz}{1-(\lz|w)}.$$
It follows that
$$B_{w,w}\lz^w=\f{1-(w|w)}{1-(\lz|w)}(\lz-(\lz|w)w).$$
Now let $\lq$ be a $\ll$-homogeneous polynomial. Then
$$\lD_{\lz,w}^n\,\lq(B_{w,w}\lz^w)=(1-(\lz|w))^n\f{(1-(w|w))^\ll}{(1-(\lz|w))^\ll}\lq(\lz-(\lz|w)w)
=(1-(w|w))^\ll\,(1-(\lz|w))^{n-\ll}\lq(\lz-(\lz|w)w)$$
$$=\S_{k=0}^{n-\ll}(-1)^k\b{n-\ll}{k}(\lz|w)^k(1-(w|w))^\ll\,\lq(\lz-(\lz|w)w).$$
Therefore we have, for some (known) constant $c'$ involving the different normalization for $\ln$ and $\ln+\ll,$
$$c'\I_D d\lm_\ln(w)\,e^{(w|b)}\,\lD_{\lz,w}^n\,\lq(B_{w,w}\lz^w)
=\S_{k=0}^{n-\ll}(-1)^k\b{n-\ll}{k}\I_D d\lm_{\ln+\ll}(w)\,e^{(w|b)}\,(\lz|w)^k\,\lq(\lz-(\lz|w)w)$$
$$=c\S_{k=0}^{n-\ll}\f{(-1)^k}{(2\ll+\ln)_k}\b{n-\ll}{k}(\lz|b)^k\lq(\lz),$$
where $c=\f{\ll+\ln-1}{2\ll+\ln-1}$ is independent of $k,$ and the last step follows from the calculation
$$\I_D d\lm_{\ln+\ll}(w)\,e^{(w|b)}\,(\lz|w)^k\,(\lz-(\lz|w)w|a)^\ll
=\S_{\el=0}^\ll(-1)^\el\b{\ll}{\el}(\lz|a)^{\ll-\el}\I_D d\lm_{\ln+\ll}(w)\,e^{(w|b)}\,(\lz|w)^{k+\el}(w|a)^\el$$
$$=\S_{\el=0}^\ll(-1)^\el\b{\ll}{\el}(\lz|a)^{\ll-\el}\f{(k+1)_\el}{(\ln+\ll)_{k+\el}}(\lz|a)^\el(\lz|b)^k
=\f{c}{(2\ll+\ln)_k}(\lz|a)^\ll(\lz|b)^k,$$
based on the formula
$$\S_{\el=0}^\ll(-1)^\el\b{\ll}{\el}\f{\lG(a+\el)}{\lG(b+\el)}=\f{B(\ll+b-a,a)}{\lG(b-a)}.$$
On the other hand
$$(\lz|\o\dl)\,e^{(z|b)}\,\lq(\lz)=(\lz|u_i)u_i^\dl e^{(z|b)}\,\lq(\lz)=(\lz|u_i)(u_i|b)e^{(z|b)}\,\lq(\lz)=(\lz|b)e^{(z|b)}\,\lq(\lz)$$
and therefore $(\lz|\o\dl)^k\,e^{(z|b)}\,\lq(\lz)=(\lz|b)^k\,e^{(z|b)}\,\lq(\lz).$
\end{proof}
It would be of interest to obtain explicit formulas for the intertwining differential operators, e.g. in the case of the Lie balls of rank $2.$

\section{Reproducing kernels}
We now consider the vector-valued Bergman spaces $H^2_\ln(D,V),$ which also depend on the scalar parameter $\ln,$ and their reproducing matrix kernel functions, denoted by $\KL_{z,w}^\ln.$ The following two properties were already mentioned in Section 1. 
\begin{lemma} In general, we have the covariance property
$$g^{\la_\ln}\,\KL_w^\ln=\o{\det(\dl_wg)}^{\ln/p}\,\KL_{gw}^\ln([g]_w^*)^{-1}$$
for the reproducing kernel of $\KL^\ln$ of $H^2_\ln(D,V).$
\end{lemma}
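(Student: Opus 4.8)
The plan is to characterize the kernel element $\KL^\ln_w\lh$ (the holomorphic section $z\mapsto\KL^\ln_{z,w}\lh$) by the reproducing property \er{8}, instantiated for the kernel $\KL^\ln$ of $\HL=H^2_\ln(D,V)$, namely $(\lF_w|\lh)_V=(\lF|\KL^\ln_w\lh)_\HL$ for all $\lF\in\HL$ and $\lh\in V$. I would then read off the transformed kernel $g^{\la_\ln}\KL^\ln_w$ by testing against an arbitrary $\lF$, pushing $g^{\la_\ln}$ onto $\lF$ via unitarity, and using the explicit multiplier formula \er{12} for the action.

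Concretely, first I would write, for arbitrary $\lF\in\HL$ and $\lh\in V$,
$$(\lF\,|\,g^{\la_\ln}\KL^\ln_w\lh)_\HL=((g^{\la_\ln})^*\lF\,|\,\KL^\ln_w\lh)_\HL=(g^{-\la_\ln}\lF\,|\,\KL^\ln_w\lh)_\HL,$$
using that $g^{\la_\ln}$ is unitary, so its inverse $(g^{\la_\ln})^*$ is the operator $g^{-\la_\ln}$. Applying the reproducing property at the point $w$ gives $(g^{-\la_\ln}\lF\,|\,\KL^\ln_w\lh)_\HL=((g^{-\la_\ln}\lF)_w\,|\,\lh)_V$, and then I would substitute the explicit value $(g^{-\la_\ln}\lF)_w=\det(\dl_wg)^{\ln/p}\,[g]_w^{-1}\lF_{gw}$ from \er{12}.

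The key bookkeeping step is to transpose the scalar and operator factors across the inner product. Since the inner product is conjugate-linear in its first variable, the scalar $\det(\dl_wg)^{\ln/p}$ emerges as its conjugate, while $[g]_w^{-1}$ moves into the second slot as its adjoint inverse, yielding $((g^{-\la_\ln}\lF)_w\,|\,\lh)_V=\o{\det(\dl_wg)}^{\ln/p}\,(\lF_{gw}\,|\,([g]_w^*)^{-1}\lh)_V$, where I use $([g]_w^{-1})^*=([g]_w^*)^{-1}$. A second application of the reproducing property, now at the point $gw$ and against the vector $([g]_w^*)^{-1}\lh$, turns this into $\o{\det(\dl_wg)}^{\ln/p}\,(\lF\,|\,\KL^\ln_{gw}([g]_w^*)^{-1}\lh)_\HL$. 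Absorbing the scalar into the linear second slot, the resulting identity $(\lF\,|\,g^{\la_\ln}\KL^\ln_w\lh)_\HL=(\lF\,|\,\o{\det(\dl_wg)}^{\ln/p}\,\KL^\ln_{gw}([g]_w^*)^{-1}\lh)_\HL$ holds for every $\lF\in\HL$ and every $\lh\in V$, which forces the claimed covariance.

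Since the argument is essentially a transposition of the multiplier factors, I expect no serious analytic obstacle; the care needed is entirely in the conventions, namely tracking the conjugate-linearity of the inner product in its first argument and the passage from $[g]_w^{-1}$ to $([g]_w^*)^{-1}$. The one point worth flagging is that \er{12} defines $g^{\la_\ln}$ only as a \emph{projective} representation when $\ln/p\notin\Zl$; the ambiguity is a phase of modulus one, so it leaves unitarity intact and the identity is to be read up to this phase, consistent with the projective formulation of the lemma.
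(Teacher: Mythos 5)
Your proposal is correct and follows essentially the same route as the paper's own proof: test $g^{\la_\ln}\KL_w^\ln\lh$ against an arbitrary $\lF$, move the action onto $\lF$ by unitarity, apply the reproducing property at $w$, substitute the explicit multiplier formula \eqref{12}, transpose the scalar (conjugating, by conjugate-linearity in the first slot) and the cocycle factor (as $([g]_w^*)^{-1}$) into the second slot, and finish with the reproducing property at $gw$. Your closing remark on the projective/phase ambiguity when $\ln/p\notin\Zl$ is a sensible caveat that the paper leaves implicit.
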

\begin{proof} Let $\lF\in H^2_\ln(D,V)$ and $\lq\in V.$ Then
$$(\lF|g^{\la_\ln}\,\KL_w^\ln\,\lq)=(g^{-\la_\ln}\lF|\KL_w^\ln\,\lq)=((g^{-\la_\ln}\lF)_w|\lq)_V=(\det(\dl_wg)^{\ln/p}[g]_w^{-1}\lF_{gw}|\lq)_V$$
$$=\o{\det g^\1(w)}^{\ln/p}(\lF_{gw}|([g]_w^*)^{-1}\lq)_V=\o{\det(\dl_wg)}^{\ln/p}(\lF|\KL_{gw}^\ln([g]_w^*)^{-1}\lq).$$
\end{proof}
\begin{corollary} 
\be{36}\KL_{z,w}^\ln=\det(\dl_zg)^{\ln/p}\,\o{\det(\dl_wg)^{\ln/p}}\,[g]_z^{-1}\,\KL_{gz,gw}^\ln\,([g]_w^*)^{-1}.\ee
\end{corollary}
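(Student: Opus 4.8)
The plan is to derive the identity \er{36} as a purely algebraic consequence of the covariance Lemma just established, by writing the matrix kernel as a composition of evaluation/coevaluation maps and exploiting the unitarity of $g^{\la_\ln}$. Recall that $\KL_w^\ln:V\to H^2_\ln(D,V)$ is the map $\lq\mapsto\KL_w^\ln\lq$, its adjoint $(\KL_z^\ln)^*$ is evaluation at $z$, and the matrix kernel is the composite $\KL_{z,w}^\ln=(\KL_z^\ln)^*\,\KL_w^\ln\in End(V)$. The corollary is then simply the reproducing-kernel transcription of covariance, parallel to \er{62}, but now carrying the extra determinantal multiplier arising from the factor $\det(\dl_zg)^{\ln/p}$ in the action \er{12}.

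First I would rewrite the Lemma to solve for the translated kernel maps, obtaining
$$\KL_{gw}^\ln=\o{\det(\dl_wg)}^{-\ln/p}\,g^{\la_\ln}\,\KL_w^\ln\,[g]_w^*,\qquad\KL_{gz}^\ln=\o{\det(\dl_zg)}^{-\ln/p}\,g^{\la_\ln}\,\KL_z^\ln\,[g]_z^*.$$
Taking the adjoint of the second relation, noting that the adjoint conjugates the scalar prefactor while $(g^{\la_\ln})^*=g^{-\la_\ln}$ by unitarity, gives
$$(\KL_{gz}^\ln)^*=\det(\dl_zg)^{-\ln/p}\,[g]_z\,(\KL_z^\ln)^*\,g^{-\la_\ln}.$$
Composing the two relations and cancelling the central factor $g^{-\la_\ln}g^{\la_\ln}=\mathrm{id}$, I arrive at
$$\KL_{gz,gw}^\ln=(\KL_{gz}^\ln)^*\KL_{gw}^\ln=\det(\dl_zg)^{-\ln/p}\,\o{\det(\dl_wg)}^{-\ln/p}\,[g]_z\,\KL_{z,w}^\ln\,[g]_w^*.$$
It then only remains to multiply on the left by $[g]_z^{-1}$, on the right by $([g]_w^*)^{-1}$, and by the reciprocal determinantal scalars, which reproduces \er{36} after observing $\o{\det(\dl_wg)}^{\ln/p}=\o{\det(\dl_wg)^{\ln/p}}$.

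I expect the only genuine care to lie in the bookkeeping of the scalar prefactors under the adjoint, since the inner products are conjugate-linear in the first slot and the factor $\det(\dl_zg)^{\ln/p}$ must be replaced by its complex conjugate when moved across the $*$. A secondary point is that $g^{\la_\ln}$ is merely a projective representation when $\ln/p$ is not an integer; however, the combination $g^{-\la_\ln}g^{\la_\ln}$ involves one and the same group element, so the undetermined unimodular phase cancels identically and does not affect the identity. No real obstacle arises: the entire content is already packaged in the Lemma, and the corollary is precisely its pointwise matrix form.
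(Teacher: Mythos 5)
Your proposal is correct and is essentially the paper's own argument: both proofs combine the covariance Lemma with the unitarity of $g^{\la_\ln}$ and track the conjugation of the scalar factor $\det(\dl_zg)^{\ln/p}$ under the adjoint; the paper simply inserts $(g^{\la_\ln})^*g^{\la_\ln}=\mathrm{id}$ directly into $(\KL_z^\ln)^*\KL_w^\ln$, whereas you first express $\KL_{gz,gw}^\ln$ in terms of $\KL_{z,w}^\ln$ and then rearrange, which is the same computation run in the opposite direction. Your extra remark that the projective phase cancels because $g^{-\la_\ln}g^{\la_\ln}$ involves a single group element is a valid clarification of a point the paper leaves implicit.
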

\begin{proof} Since $g^{\la_\ln}$ is unitary, it follows that
$$\KL_{z,w}^\ln=(\KL_z^\ln)^*\KL_w^\ln=(g^{\la_\ln}\KL_z^\ln)^*(g^\la\KL_w^\ln)=\det(\dl_zg)^{\ln/p}\,(\KL_{gz}^\ln([g]_z^*)^{-1})^*\o{\det(\dl_wg)}^{\ln/p}\,\KL_{gw}^\ln([g]_w^*)^{-1}$$
$$=\det(\dl_zg)^{\ln/p}\,\o{\det(\dl_wg)^{\ln/p}}\,[g]_z^{-1}\,(\KL_{gz}^\ln)^*\KL_{gw}^\ln\,([g]_w^*)^{-1}
=\det(\dl_zg)^{\ln/p}\,\o{\det(\dl_wg)^{\ln/p}}\,[g]_z^{-1}\,\KL_{gz,gw}^\ln\,([g]_w^*)^{-1}.$$
\end{proof}
As a consequence we obtain
\begin{lemma}
\be{}\KL_{x,x}^\ln=\lD_{x,x}^{-\ln}\,[\gl_x]_0\,\KL_{0,0}^\ln\,[\gl_x]_0^*.\ee
\end{lemma}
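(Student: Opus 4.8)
The plan is to obtain this identity as a direct specialization of the covariance formula \er{36} from the preceding Corollary. First I would apply \er{36} with the transvection $g=\gl_x$ and the base point $z=w=0$. Since $\gl_x(0)=x$, the term $\KL_{gz,gw}^\ln$ on the right-hand side becomes precisely $\KL_{x,x}^\ln$, so that \er{36} reads
$$\KL_{0,0}^\ln=\det(\dl_0\gl_x)^{\ln/p}\,\o{\det(\dl_0\gl_x)^{\ln/p}}\,[\gl_x]_0^{-1}\,\KL_{x,x}^\ln\,([\gl_x]_0^*)^{-1}.$$

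Next I would evaluate the two determinant factors. By \er{18} we have $\dl_0\gl_x=B^{1/2}_{x,x}$, and the relation $\det B_{x,x}=\lD_{x,x}^p$ gives $\det(\dl_0\gl_x)=\det B^{1/2}_{x,x}=\lD_{x,x}^{p/2}$, whence $\det(\dl_0\gl_x)^{\ln/p}=\lD_{x,x}^{\ln/2}$. Since $x\in D$ forces $B_{x,x}>0$, the quasi-determinant $\lD_{x,x}$ is real and positive, so the complex conjugate factor equals the same quantity. The product of the two determinant factors therefore collapses to $\lD_{x,x}^{\ln}$, and the displayed identity becomes
$$\KL_{0,0}^\ln=\lD_{x,x}^{\ln}\,[\gl_x]_0^{-1}\,\KL_{x,x}^\ln\,([\gl_x]_0^*)^{-1}.$$

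Finally I would solve for $\KL_{x,x}^\ln$ by conjugating with $[\gl_x]_0$: multiplying on the left by $[\gl_x]_0$ and on the right by $[\gl_x]_0^*$ yields $[\gl_x]_0\,\KL_{0,0}^\ln\,[\gl_x]_0^*=\lD_{x,x}^{\ln}\,\KL_{x,x}^\ln$, and dividing by the scalar $\lD_{x,x}^{\ln}$ gives exactly the claimed formula. There is essentially no hard step here; the only point requiring attention is the bookkeeping of the genus exponents and the observation that the reality and positivity of $\lD_{x,x}$ let the two conjugate determinant factors combine cleanly, so that no residual phase survives.
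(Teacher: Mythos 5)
Your proposal is correct and is exactly the paper's argument: the paper's proof consists of the single line ``Apply \er{36} to $g=\gl_x$ and $z=w=0$,'' and your write-up simply fills in the implicit bookkeeping (using \er{18} and $\det B_{x,x}=\lD_{x,x}^p$ to evaluate the determinant factors, the positivity of $\lD_{x,x}$ to merge the conjugate factor, and then solving for $\KL_{x,x}^\ln$).
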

\begin{proof} Apply \er{36} to $g=\gl_x$ and $z=w=0.$
\end{proof}
We may write
\be{38}\KL_{z,w}^\ln=\lD_{z,w}^{-\ln}\,\JL_{z,w}^\ln,\ee
where $\JL_{z,w}^\ln$ is obtained by polarizing
$$\JL_{x,x}^\ln=[\gl_x]_0\,\KL_{0,0}^\ln\,[\gl_x]_0^*.$$
In general this is not a positive definite kernel. 

For the 'little' Hilbert space $H^2_\ln(D,\PL_\ll Z),$ the reproducing kernel is essentially unique, and can be expressed in closed form:
\begin{proposition} For each partition $\ll$ the reproducing kernel $\KL^{\ln,\ll}_w:\PL_\ll Z\to H^2_\ln(D,\PL_\ll Z)$ is (after normalization) given by
\be{31}(\KL^{\ln,\ll}_w\lq)_z(\lz)=\lD_{z,w}^{-\ln}\,\lq(B_{z,w}^{-1}\lz)=\lD_{z,w}^{-\ln}\,(\JL^\ll_{z,w}\lq)(\lz)\qquad\forall\,\lq\in\PL_\ll Z.\ee Here
$$(\JL^\ll_{z,w}\lq)(\lz)=(B_{z,w}^\lp\lq)(\lz)=\lq(B_{z,w}^{-1}\lz)$$
is independent of $\ln.$
\end{proposition}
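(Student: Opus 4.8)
The plan is to pin down the kernel from its behaviour at the base point $0\in D$ together with the covariance already established in \er{36} and \er{38}. Since for $\ln$ large the point evaluations on $H^2_\ln(D,\PL_\ll Z)$ are continuous, a reproducing kernel exists and is unique, so it suffices to identify it. Writing $\KL^{\ln,\ll}_{z,w}=\lD_{z,w}^{-\ln}\,\JL^{\ln,\ll}_{z,w}$ as in \er{38}, where the diagonal values satisfy $\JL^{\ln,\ll}_{x,x}=[\gl_x]_0\,\KL^{\ln,\ll}_{0,0}\,[\gl_x]_0^*$, the whole computation reduces to (i) evaluating the base-point operator $\KL^{\ln,\ll}_{0,0}\in End(\PL_\ll Z)$ and (ii) computing the action of the cocycle $[\gl_x]_0$ on $\PL_\ll Z$.

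For step (i), I would apply the covariance identity \er{36} to a group element $g=k\in K$. Since $k$ fixes the origin we have $|\det(\dl_0k)|=1$, and $[k]_0=k$ acts on $\PL_\ll Z$ by the unitary operator $k^\lp$; the identity then collapses to $k^\lp\,\KL^{\ln,\ll}_{0,0}=\KL^{\ln,\ll}_{0,0}\,k^\lp$ for every $k\in K$. As $\PL_\ll Z$ is an irreducible $K$-module in the Peter-Weyl decomposition, Schur's lemma forces $\KL^{\ln,\ll}_{0,0}=c\,I$ for some $c>0$; the asserted normalization is exactly the choice $c=1$.

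For step (ii), the little-space cocycle is $[g]_z=\dl_zg$, so by \er{18} we have $[\gl_x]_0=B^{1/2}_{x,x}$, acting on $\PL_\ll Z$ by $([\gl_x]_0\,\lq)(\lz)=\lq(B^{-1/2}_{x,x}\lz)$, i.e. $[\gl_x]_0=(B^{1/2}_{x,x})^\lp$. Here I would record the adjoint relation $(h^\lp)^*=(h^*)^\lp$ for the Fischer-Fock inner product, which holds because both sides are anti-holomorphic in $h\in K^\Cl$ and agree on the real form $K$, where $\lp$ is unitary and $h^*=h^{-1}$. Since $B^{1/2}_{x,x}$ is self-adjoint, this gives $[\gl_x]_0^*=[\gl_x]_0$, whence $\JL^{\ln,\ll}_{x,x}=[\gl_x]_0^2$, and composing the substitution twice yields $(\JL^{\ln,\ll}_{x,x}\,\lq)(\lz)=\lq(B^{-1}_{x,x}\lz)$.

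Finally I would pass from the diagonal to general arguments by polarization: the kernel $\JL^{\ln,\ll}_{z,w}$ is holomorphic in $z$ and anti-holomorphic in $w$ (as is $\lD_{z,w}^{-\ln}$), and so is the operator-valued map $(z,w)\mapsto\[\lq\mapsto\lq(B^{-1}_{z,w}\lz)\]$; since the two coincide on the diagonal they coincide identically. Substituting back into \er{38} gives $(\KL^{\ln,\ll}_w\lq)_z(\lz)=\lD_{z,w}^{-\ln}\,\lq(B^{-1}_{z,w}\lz)$ and exhibits $\JL^\ll_{z,w}=B_{z,w}^\lp$ as independent of $\ln$ (the scalar $c$ being the only $\ln$-dependent quantity, removed by normalization). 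I expect step (i) to be the main obstacle: one must justify that $\KL^{\ln,\ll}_{0,0}$ is scalar, and — throughout step (ii) and the polarization — carefully track the direction of the $\lp$-action (whether $h$ sends $\lq(\lz)$ to $\lq(h\lz)$ or to $\lq(h^{-1}\lz)$) and the adjoint relation $(h^\lp)^*=(h^*)^\lp$, since a sign error there would corrupt the final formula.
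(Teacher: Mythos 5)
Your proof is correct and follows essentially the same route as the paper's: the covariance relation \er{36}/\er{38} applied to the transvections $\gl_x$ reduces the kernel to its value at the origin, which combined with $\dl_0\gl_x=B^{1/2}_{x,x}$ from \er{18} and polarization yields \er{31}. Your only additions are details the paper's two-line proof leaves implicit — the Schur's lemma argument showing $\KL^{\ln,\ll}_{0,0}$ is a positive scalar (which is what justifies the phrase ``after normalization'') and the adjoint identity $(h^\lp)^*=(h^*)^\lp$ — and you handle both correctly.
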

\begin{proof} The general covariance formula \er{62} yields
$$\KL_{x,x}^{\ln,\ll}=\lD_{x,x}^{-\ln}(\dl_0\gl_x)(\dl_0\gl_x)^*.$$
Now the assertion follows with \er{18}.
\end{proof}
\begin{proposition} On reproducing kernel functions the intertwiner \er{32} has the value
$$(\IL_\ll(\KL^{\ln,\ll}_w\lq))_z(\lz)=\f{\lD_{z+\lz,w}^n}{\lD_{z,w}^{\ln+n}}\,\lq((z+\lz)^w-z^w)$$
for all $\lq\in\PL_\ll Z.$
\end{proposition}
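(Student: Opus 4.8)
The plan is to recast the integral formula \er{32} for $\IL_\ll$ as an inner product in the little Hilbert space $H^2_\ln(D,\PL_\ll Z)$ and then to evaluate it on a reproducing kernel via the reproducing property \er{8}; this avoids substituting \er{31} directly into \er{32}, which would leave an integral one does not want to evaluate. The key structural point is that the Bergman factor $B_{x,x}$ appearing in \er{32} is precisely the one occurring in the little inner product \er{63}: since $\lF_x(B_{x,x}(\cdot))=(\lF_x\circ B_{x,x})(\cdot)$, the integrand of \er{32} contains the value $(\lF_x\circ B_{x,x})((z+\lz)^x-z^x)$. Invoking the Fischer--Fock reproducing kernel $\EL^\ll$ of $\PL_\ll Z$ from \er{60}, which satisfies $(\EL^\ll_{\cdot,a}|\lq)_F=\lq(a)$ for every $\lq\in\PL_\ll Z$ and $a\in Z$, this value equals $(\EL^\ll_{\cdot,(z+\lz)^x-z^x}|\lF_x\circ B_{x,x})_F$.

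Carrying this out, I would fix $(z,\lz)$ and introduce the $\PL_\ll Z$-valued map
$$\lT^{(z,\lz)}_x(\lz'):=\o{\(\f{\lD_{z+\lz,x}^n}{\lD_{z,x}^{\ln+n}}\)}\,\EL^\ll_{\lz',\,(z+\lz)^x-z^x}.$$
Because $\lD_{\cdot,x}$ and the quasi-inverse $u^x$ are anti-holomorphic in $x$, while $\EL^\ll_{\lz',\lo}$ is anti-holomorphic in $\lo$, the assignment $x\mapsto\lT^{(z,\lz)}_x$ is holomorphic, so $\lT^{(z,\lz)}$ is a holomorphic $\PL_\ll Z$-valued map on $D$. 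Comparing the measure-weighted integrands and using the conjugate-linearity of $(\cdot|\cdot)_F$ in its first slot to absorb the scalar $\lD^n_{z+\lz,x}\,\lD^{-\ln-n}_{z,x}$ into $\lT^{(z,\lz)}_x$, I would obtain
$$(\IL_\ll\lF)_z(\lz)=\I_{D}d\lm(x)\,\lD_{x,x}^\ln\,(\lT^{(z,\lz)}_x|\lF_x\circ B_{x,x})_F=(\lT^{(z,\lz)}|\lF)_{H^2_\ln(D,\PL_\ll Z)},$$
which matches \er{63} term by term and holds for all $\lF$ in the little space.

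Finally I would specialize $\lF=\KL^{\ln,\ll}_w\lq$. By the reproducing property \er{8}, with the $K$-invariant inner product on $\PL_\ll Z$ normalized as in \er{31},
$$(\IL_\ll(\KL^{\ln,\ll}_w\lq))_z(\lz)=(\lT^{(z,\lz)}|\KL^{\ln,\ll}_w\lq)=(\lT^{(z,\lz)}_w|\lq)_F,$$
and evaluating $\lT^{(z,\lz)}_w$ together with $(\EL^\ll_{\cdot,a}|\lq)_F=\lq(a)$ at $a=(z+\lz)^w-z^w$ produces exactly $\f{\lD_{z+\lz,w}^n}{\lD_{z,w}^{\ln+n}}\,\lq((z+\lz)^w-z^w)$, as asserted. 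I expect the genuine difficulty to be analytic rather than algebraic: one must verify that $\lT^{(z,\lz)}$ actually belongs to $H^2_\ln(D,\PL_\ll Z)$ (or else argue on the dense subspace of polynomial sections and pass to a limit) in order to legitimately apply \er{8}, and one must track the normalization constant hidden in \er{31} so that it is consistent with the Fischer--Fock pairing used in the last step.
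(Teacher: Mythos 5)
Your proof is correct and is essentially the paper's own argument: your auxiliary element $\lT^{(z,\lz)}_x$ coincides (via $\o{\lD_{z,x}}=\lD_{x,z}$) with the map $\lF_x=\lD_{x,z}^{-\ln-n}\,\lD_{x,z+\lz}^n\,\EL^\ll_{(z+\lz)^x-z^x}$ introduced in the paper's proof, and both arguments conclude by combining the reproducing property of $\KL^{\ln,\ll}_w$ in the little Hilbert space with the Fischer--Fock reproducing property of $\EL^\ll$. The analytic caveat you flag (membership of $\lT^{(z,\lz)}$ in $H^2_\ln(D,\PL_\ll Z)$ so that the reproducing property may be applied) is likewise left implicit in the paper.
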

\begin{proof} The reproducing property of \er{31} yields
$$(\lF_w|\lq)_F=(\lF|\KL^{\ln,\ll}_w\lq)=\I_D d\lm(x)\,\lD_{x,x}^\ln\(\lF_x\|(\KL^{\ln,\ll}_w q)_x\circ B_{x,x}\)_F =\I_D d\lm(x)\,\f{\lD_{x,x}^\ln}{\lD_{x,w}^\ln}\(\lF_x\|q\circ B_{x,w}^{-1}B_{x,x}\)_F$$
for all $\lF\in H^2_\ln(D,\PL_\ll Z).$ Now consider the holomorphic map
$$D\ni x\mapsto\lF_x:=\lD_{x,z}^{-\ln-n}\,\lD_{x,z+\lz}^n\,\EL^\ll_{(z+\lz)^x-z^x}\in\PL_\ll Z.$$
Then
$$(\IL_\ll(\KL^{\ln,\ll}_w\lq))_z(\lz)
=\I_{D}d\lm(x)\,\f{\lD_{x,x}^\ln\,\lD_{z+\lz,x}^n}{\lD_{z,x}^{\ln+n}}\,((\KL^\ln_w\lq)_x\circ B_{x,x})((z+\lz)^x-z^x)$$
$$=\I_{D}d\lm(x)\,\f{\lD_{x,x}^\ln\,\lD_{z+\lz,x}^n}{\lD_{z,x}^{\ln+n}\,\lD_{x,w}^\ln}\,(\lq\circ B_{x,w}^{-1}B_{x,x})((z+\lz)^x-z^x)$$
$$=\I_{D}d\lm(x)\,\f{\lD_{x,x}^\ln\,\lD_{z+\lz,x}^n}{\lD_{z,x}^{\ln+n}\,\lD_{x,w}^\ln}\(\EL^\ll_{(z+\lz)^x-z^x}\|\lq\circ B_{x,w}^{-1}B_{x,x}\)_F$$
$$=\I_{D}d\lm(x)\,\f{\lD_{x,x}^\ln}{\lD_{x,w}^\ln}\(\lF_x\|\lq\circ B_{x,w}^{-1}B_{x,x}\)_F=(\lF_w|\lq)_F$$
$$=\f{\lD_{z+\lz,w}^n}{\lD_{z,w}^{\ln+n}}\,(\EL^\ll_{(z+\lz)^w-z^w}|\lq)_F=\f{\lD_{z+\lz,w}^n}{\lD_{z,w}^{\ln+n}}\,\lq((z+\lz)^w-z^w)$$
using the reproducing property of $\EL^\ll$ with respect to the Fischer-Fock inner product.
\end{proof}
We may describe $\KL^{\ln,\ll}_{z,w}$ in terms of its integral kernel
$$\KL^{\ln,\ll}_{z,w}(\lz,\lo):=(\EL^\ll_\lz|\KL^{\ln,\ll}_{z,w}\EL^\ll_\lo)_F=(\KL^{\ln,\ll}_{z,w}\EL^\ll_\lo)(\lz)
=\lD_{z,w}^{-\ln}\,\EL^\ll_\lo(B_{z,w}^{-1}\lz)=\lD_{z,w}^{-\ln}\,\EL^\ll(B_{z,w}^{-1}\lz,\lo)$$
with respect to Gauss measure. Using an orthonormal basis 
\be{53}\lF^\la_z(\lz)=\S_i p^{\la,i}(z)\,q^{\la,i}(\lz)\ee 
of $H^2_\ln(D,\PL_\ll Z)$ we have 
$\KL^{\ln,\ll}_{z,w}=\S_\la\lF^\la_z(\lF^\la_w)^*\in End(\PL_\ll Z)$ and hence
\be{33}\KL^{\ln,\ll}_{z,w}(\lz,\lo)=\S_\la\lF^\la_z(\lz)\o{\lF^\la_w(\lo)}.\ee
For the 'big' Hilbert spaces $H^2_\ln(D,\PL^nZ)$ the $G$-action is not irreducible and covariant kernel functions are not unique. All possible $G$-covariant kernel functions are obtained as follows: Choose a tuple
$$\cl=(c_\ll)_{\ll\le n}$$
of constants $c_\ll>0.$ Then there exists a unique $K$-invariant inner product on $\PL^nZ=:\PL_\cl^nZ$ such that the resulting Hilbert space $H^2_\ln(D,\PL_\cl^nZ)$ has the orthonormal basis $\{\F c_\ll\IL_\ll\lF^\la\},$ using the intertwiners $\IL_\ll$ and the orthonormal basis \er{53}. The reproducing kernel $\KL^{\ln,\cl}_{z,w}\in End(\PL_\cl^nZ)$ is determined by
\be{35}\KL^{\ln,\cl}_{z,w}(\lz,\lo):=\S_\ll c_\ll\S_\la(\IL_\ll\lF^\la)_z(\lz)\o{(\IL_\ll\lF^\la)_w(\lo)}.\ee
\begin{lemma}\label{e} Every $K$-invariant sesqui-polynomial $F$ on $Z\xx\o Z$ has the form
$$F(\lz,\lo)=\S_\lm a_\lm\,\EL^\lm(\lz,\lo)$$
over partitions $\lm,$ for suitable constants $a_\lm.$
\end{lemma}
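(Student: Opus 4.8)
The plan is to determine the space of $K$-invariant sesqui-polynomials by a purely representation-theoretic computation and then to recognize each basic invariant as one of the reproducing kernels $\EL^\lm.$ A sesqui-polynomial $F$ on $Z\xx\o Z$ is holomorphic in $\lz$ and anti-holomorphic in $\lo,$ hence a finite sum of products $\lf(\lz)\,\o{\lq(\lo)}$ with $\lf,\lq\in\PL Z.$ The space of all such $F$ is therefore the tensor product $\PL Z\xt\o{\PL Z},$ on which $K$ acts diagonally through the polynomial action $\lp,$ and $K$-invariance of $F$ is exactly the condition $F(k\lz,k\lo)=F(\lz,\lo)$ for all $k\in K.$

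First I would insert the Peter-Weyl decomposition $\PL Z=\S_\ll\PL_\ll Z$ into each factor, giving
$$\PL Z\xt\o{\PL Z}=\S_{\ll,\lm}\PL_\ll Z\xt\o{\PL_\lm Z}.$$
Since the Fischer-Fock inner product is $K$-invariant, each $\PL_\lm Z$ is a unitary $K$-module and $\o{\PL_\lm Z}\cong(\PL_\lm Z)^*$ as $K$-modules; consequently the invariants in each summand are
$$(\PL_\ll Z\xt\o{\PL_\lm Z})^K=\mathrm{Hom}_K(\PL_\lm Z,\PL_\ll Z).$$
The modules $\PL_\ll Z$ are irreducible and pairwise inequivalent (inequivalence as $K$-modules follows from the stated inequivalence as $K^\Cl$-modules, since any $K$-intertwiner between holomorphic $K^\Cl$-representations is automatically $K^\Cl$-equivariant). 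Thus Schur's lemma shows that this Hom-space vanishes unless $\ll=\lm,$ in which case it is one-dimensional, spanned by the identity.

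It remains to match the generator of each diagonal block with $\EL^\lm.$ Under the canonical isomorphism $\PL_\lm Z\xt(\PL_\lm Z)^*\cong End(\PL_\lm Z)$ the identity corresponds precisely to $\EL^\lm_{\lz,\lo}=\S_m\lf_\lm^m(\lz)\,\o{\lf_\lm^m(\lo)}$ of \er{60}, and the fact that $\EL^\lm$ really is $K$-invariant follows from the basis-independence of the reproducing kernel: for $k\in K$ the functions $\lf_\lm^m\oc k$ again form an orthonormal basis of $\PL_\lm Z,$ so $\EL^\lm(k\lz,k\lo)=\EL^\lm(\lz,\lo).$ Distinct $\EL^\lm$ lie in distinct isotypic blocks and are therefore linearly independent; combined with the one-dimensional count above this shows that the $K$-invariant sesqui-polynomials are exactly the linear span of $\{\EL^\lm\},$ which is the claimed expansion $F(\lz,\lo)=\S_\lm a_\lm\,\EL^\lm(\lz,\lo).$ The only point requiring care is this last identification of the abstract one-dimensional invariant with the concrete kernel $\EL^\lm$ rather than a spurious multiple; all the other steps are routine invocations of Peter-Weyl and Schur.
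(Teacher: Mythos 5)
Your proof is correct and is essentially the paper's own argument in a different notation: the paper associates to $F$ the operator $F^\#=\S_{\lm,\lk}\S_{m,k}a^{m,k}_{\lm,\lk}\lf^m_\lm(\lf^k_\lk)^*$ on $\PL Z$ and observes that $K$-invariance of $F$ makes $F^\#$ commute with the $K$-action, which is precisely your identification of $K$-invariant sesqui-polynomials with elements of $\mathrm{Hom}_K$, after which both proofs conclude by multiplicity-freeness of the Peter--Weyl decomposition plus Schur's lemma that $F^\#$ is block-diagonal with scalar blocks, i.e. $F=\S_\lm a_\lm\,\EL^\lm$. The only differences are cosmetic: you spell out the $K$ versus $K^\Cl$ inequivalence point and the identification of the identity operator with the kernel $\EL^\lm$, both of which the paper leaves implicit.
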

\begin{proof} In terms of orthonomal bases $\lf_\lm^m$ of $\PL_\lm Z$ we may write
$$F(\lz,\lo)=\S_{\lm,\lk}\S_{m,k}a_{\lm,\lk}^{m,k}\lf_\lm^m(\lz)\o{\lf_\lk^k(\lo)}$$ 
Since $F$ is $K$-invariant, the operator 
$$F^\#:=\S_{\lm,\lk}\S_{m,k}a^{m,k}_{\lm,\lk}\lf^m_\lm(\lf^k_\lk)^*$$
acting on $\PL Z$ commutes with the action of $K.$ Since this action is multiplicity-free it follows that $F^\#$ is a block-diagonal operator. This implies $a_{\lm,\lk}^{m,k}=0$ whenever $\lm\ne\lk.$ Moreover, on each irreducible subspace $\PL_\lm Z,$ the operator 
$F^\#$ acts as a multiple of the identity. Therefore $a_{\lm,\lm}^{m,k}=a_\lm$ is independent of $m$ and $k.$
\end{proof}
\begin{theorem}\label{g} The integral kernel $\KL^{\ln,\cl}_{0,0}$ has the form
$$\KL^{\ln,\cl}_{0,0}(\lz,\lo)=\S_{\ll\le n}c_\ll\S_{\ll\le\lm\le n}a_\lm^\ll(\ln)\,\EL^\lm(\lz,\lo),$$
where 
\be{37}A(\ln)=(a_\lm^\ll(\ln))_{\ll\le\lm}\ee 
is a lower triangular matrix of functions which are continuous in the parameter $\ln.$ Moreover, $a_\ll^\ll=1.$ 
\end{theorem}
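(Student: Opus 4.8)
The plan is to compute the integral kernel $\KL^{\ln,\cl}_{0,0}$ directly from the definition (\ref{35}) by evaluating the intertwiners $\IL_\ll$ at $z=0$ and exploiting the $K$-invariance structure. First I would set $z=0$ in the formula for the intertwiner acting on reproducing kernels. From the Proposition preceding Lemma~\ref{e}, the basis vectors $\lF^\la$ of the little Hilbert space are reproduced by $\KL^{\ln,\ll}_w$, so the sum $\S_\la(\IL_\ll\lF^\la)_0(\lz)\o{(\IL_\ll\lF^\la)_0(\lo)}$ should be recognized as the integral kernel of a self-adjoint operator built from $\IL_\ll\KL^{\ln,\ll}_0\IL_\ll^*$ evaluated at the origin. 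Setting $z=0$ and $w=0$ in the explicit value $(\IL_\ll(\KL^{\ln,\ll}_w\lq))_z(\lz)=\lD_{z+\lz,w}^{-\ln-n}\lD_{z,w}^{-\ln-n}\cdots$ collapses most of the quasi-determinant factors, since $\lD_{0,0}=1$ and $z^0=z$, $(z+\lz)^0=z+\lz$, leaving a manageable expression in $\lz$ and $\lo$ alone.

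Next I would identify the resulting sesqui-polynomial as $K$-invariant and apply Lemma~\ref{e}. The key observation is that $\KL^{\ln,\cl}_{0,0}$ is a $K$-invariant sesqui-polynomial on $Z\xx\o Z$: invariance follows from (\ref{14}), which forces $[k]_0=k$, together with the $K$-invariance of the inner products used to build the big Hilbert space. Lemma~\ref{e} then guarantees an expansion $\S_\lm a_\lm\EL^\lm(\lz,\lo)$. Because each $\IL_\ll$ maps $H^2_\ln(D,\PL_\ll Z)$ into $\PL^nZ$ and the image of $\PL_\ll Z$ under the leading (degree-zero in the shift) term lands in $\PL_\ll Z$ with higher-order shift corrections landing in $\PL_\lm Z$ for $\lm\supset\ll$, the coefficient $a_\lm^\ll$ vanishes unless $\ll\le\lm$; this yields the lower-triangularity of the matrix $A(\ln)$ in (\ref{37}). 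The normalization $a_\ll^\ll=1$ should come from the leading term of the intertwiner: at $z=0$ the lowest-order Peter-Weyl component of $\IL_\ll\lF$ reproduces $\lF$ itself on $\PL_\ll Z$ up to the chosen normalization of $\IL_\ll$, so the diagonal entry is exactly $1$ by the normalization already fixed in the definition of $\IL_\ll$.

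For continuity in $\ln$, I would trace the $\ln$-dependence through the integral formula (\ref{32}) for $\IL_\ll$. The coefficients $a_\lm^\ll(\ln)$ are obtained by integrating $\lD_{x,x}^\ln$ against fixed polynomial data over $D$ with respect to the invariant measure $d\lm$; for $\ln$ large these integrals converge absolutely and depend continuously (indeed real-analytically) on $\ln$. The continuation to the full Wallach set follows because the resulting expressions are ratios of Gamma-type (Pochhammer) factors in $\ln$, as the rank-one computation in the preceding theorem already illustrates. I expect the main obstacle to be bookkeeping the triangularity precisely: one must verify that the shift operators $S(w)$ appearing in Theorem~\ref{d}, which govern how $\IL_\ll$ spreads a $\PL_\ll Z$-valued function across the components $\PL_\lm Z$ with $\lm\supset\ll$, contribute only to strictly upper partitions and never back-propagate to $\lm\not\supseteq\ll$. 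This requires combining the shift structure of multiplication and Jordan differentiation (the operators $M_i^\ll(w),D_i^\ll(w)$) with the orthogonality of the Peter-Weyl components under the Fischer-Fock inner product to isolate each coefficient $a_\lm^\ll(\ln)$ cleanly.
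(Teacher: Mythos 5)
Your proposal is correct and follows essentially the same route as the paper's own proof: specialize the kernel to $z=w=0$, expand $(\IL_\ll\lF^\la)_0$ in Peter--Weyl components, obtain lower-triangularity from the partition-raising (shift) structure of $(Q_\lz w)^{\d\lp_n}$ coming from Theorem \ref{d} and \cite{U1}, apply Lemma \ref{e} to the $K$-invariant sesqui-polynomial $\KL^{\ln,\cl}_{0,0}$, and read off continuity in $\ln$ from the integral representation \er{32} of the coefficients. Your only slips are notational (the composition $\IL_\ll\,\KL^{\ln,\ll}_0\,\IL_\ll^*$ does not type-check, and the exponents you recall for the kernel-vector formula are off), while your justification of $a_\ll^\ll=1$ --- that the lowest Peter--Weyl component of $(\IL_\ll\lF)_0$ is $\lF_0$, so the $(\ll,\ll)$-part of the sum is $\KL^{\ln,\ll}_{0,0}(\lz,\lo)=\EL^\ll(\lz,\lo)$ by \er{31} --- is in fact more explicit than the paper's, which leaves that normalization implicit.
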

\begin{proof} For any $v\in Z$ the element $\o\tL_w=\exp(Q_\lz w)\in G^\Cl$ acts on $\PL^nZ$ via
$$(\o\tL_w\lq)(\lz)=\det(\dl_\lz\o\tL_{-w})^{-n/p}\,\lq(\o\tL_{-w}(\lz))=(\det B_{\lz,w}^{-1})^{-n/p}\,\lq(\lz^w)=\lD_{\lz,w}^n\,\lq(\lz^w).$$
This implies
\be{34}(\o\tL_w\lq)(\lz)=\S_{\lm\le n}\S_m\lf_\lm^m(\lz)(\lf_\lm^m|\o\tL_w\lq)_F.\ee
On the other hand, we have for every $\lg\in\gL^\Cl$
$$f(\exp(-\lg)(\lz))=\S_{k\ge 0}((\lg^\ld)^k\,f)(\lz)$$
for any holomorphic function $f(\lz).$ Here $(\lg^\ld)^k$ are the differential operator powers of the vector field $\lg(\lz)^\ld.$ Now let $\lq\in\PL_\ll Z.$ For $\lg(\lz)=Q_\lz w$ it follows from \cite{U1} that the polynomial 
$(Q_\lz w)^\ld\lq$ has only components of type $\ll+\Le_i$ for some $1\le i\le r.$ It follows that for each fixed $k\ge 0$ the polynomial $(((Q_\lz w)^\ld)^k\,\lq)(\lz)$ has only components of type $\lm\ge\ll.$ The same is true after multiplying with the 
$\lz$-polynomial $\lD_{\lz,w}^n.$ Since the whole series
$$\lD_{\lz,w}^n\,\lq(\lz^w)=\lD_{\lz,w}^n\S_{k\ge 0}(((Q_\lz w)^\ld)^k\,\lq)(\lz)$$
belongs to $\PL^nZ$ we can sharpen \er{34} to
$$(\o\tL_w\lq)(\lz)=\S_{\ll\le\lm\le n}\S_m\lf_\lm^m(\lz)(\lf_\lm^m|\o\tL_w\lq)_F.$$
whenever $\lq\in\PL_\ll Z.$ Setting $z=0$ in \er{32} we obtain
$$\IL_\ll(f\xt\lq)_0(\lz)=\I_D d\lm(x)\,\lD_{x,x}^\ln\,\lD_{\lz,x}^n\,f(x)\,\lq(B_{x,x}\lz^x)
=\S_{\ll\le\lm\le n}\S_m\lf_\lm^m(\lz)\I_D d\lm(x)\,\lD_{x,x}^\ln\,f(x)\,(\lf_\lm^m|\o\tL_x(\lq\circ B_{x,x}))_F.$$
Specializing \er{35} to $z=w=0$ we obtain
$$\KL^{\ln,\cl}_{0,0}(\lz,\lo)=\S_\ll c_\ll\S_\la(\IL_\ll\lF^\la)_0(\lz)\o{(\IL_\ll\lF^\la)_0(\lo)}$$
$$=\S_\ll c_\ll\S_\la\S_{\ll\le\lm,\lk\le n}\S_{m,k}\lf_\lm^m(\lz)\o{\lf_\lk^k(\lo)}\I_D d\lm(x)\,\lD_{x,x}^\ln\,(\lf_\lm^m|\o\tL_x(\lF^\la_x\circ B_{x,x}))_F\I_D d\lm(y)\,\lD_{y,y}^\ln\,(\o\tL_y(\lF^\la_y\circ B_{y,y})|\lf_\lk^k)_F$$
$$=\S_\ll c_\ll\S_{\ll\le\lm,\lk\le n}\S_{m,k}\,a_{\lm,\lk}^{m,k}(\ln)\lf_\lm^m(\lz)\o{\lf_\lk^k(\lo)},$$
where
$$a_{\lm,\lk}^{m,k}(\ln)=\S_\la\I_D d\lm(x)\,\lD_{x,x}^\ln\,(\lf_\lm^m|\o\tL_x(\lF^\la_x\circ B_{x,x}))_F
\I_D d\lm(y)\,\lD_{y,y}^\ln\,(\o\tL_y(\lF^\la_y\circ B_{y,y})|\lf_\lk^k)_F$$
is continuous in $\ln.$ Since the sesqui-polynomial $\KL^{\ln,\cl}_{0,0}$ is $K$-invariant, Lemma \er{e} implies that the non-zero terms occur only for $\lm=\lk$ and $a_{\lm,\lm}^{m,k}=a_\lm$ is independent of $m$ and $k.$
\end{proof}
For the unit ball in $\Cl^d,$ we can describe $\KL^{\ln,\cl}_{0,0}$ explicitly.
\begin{theorem} For the unit ball in $\Cl^d,$ the general $G$-covariant kernel has the form
$$\KL^{\ln,\cl}_{0,0}(\lz,\lo)=\S_{\el=0}^n(\lz|\lo)^\el\S_{\ll=0}^\el\lk^\ll_\el(\ln)\,\f{c_\ll}{\ll!},$$
where $c_\ll>0$ are arbitrary constants, and $(\lk^\ll_\el)$ is a lower triangular matrix with universal coefficients 
$$\lk^\ll_\el(\ln)=\(\f{(\ll-n)_{\el-\ll}}{(\ln+2\ll)_{\el-\ll}}\)^2\S_{p+q=\el-\ll}\b{\ln+\ll-1}p\binom{\ll}q.$$
\end{theorem}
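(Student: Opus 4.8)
The plan is to compute the kernel block by block at the origin. Evaluating the defining sum for $\KL^{\ln,\cl}$ at $z=w=0$ gives $\KL^{\ln,\cl}_{0,0}=\S_{\ll\le n}c_\ll\,R_\ll$, where $R_\ll(\lz,\lo):=\S_\la(\IL_\ll\lF^\la)_0(\lz)\,\o{(\IL_\ll\lF^\la)_0(\lo)}$ is the value at the origin of the reproducing kernel of the image $\IL_\ll\,H^2_\ln(D,\PL_\ll Z)$. Since $R_\ll$ is $K$-invariant, Lemma \er{e} shows it is a polynomial in the single invariant $(\lz|\lo)$, so the task reduces to extracting, for each $\ll$ and each $\el$ with $\ll\le\el\le n$, the coefficient of $(\lz|\lo)^\el$ in $R_\ll$. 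Writing that coefficient as $\lk^\ll_\el\,c_\ll/\ll!$ and interchanging the order of the $\ll$- and $\el$-summations will produce the stated double sum, with lower-triangularity and $\lk^\ll_\ll=1$ (matching $a^\ll_\ll=1$ in Theorem \er{g}) as automatic consistency checks.

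To compute $R_\ll$ I would feed the explicit rank-one intertwiner $\IL_\ll=\S_{k=0}^{n-\ll}\f{(\ll-n)_k}{(\ln+2\ll)_k}\f{(\lz|\o\dl)^k}{k!}$ of the preceding theorem into both factors, together with the closed form \er{31} of the little-space reproducing kernel $\S_\la\lF^\la_z(\lz)\o{\lF^\la_w(\lo)}=\KL^{\ln,\ll}_{z,w}(\lz,\lo)$ from \er{33}. Because $\IL_\ll$ is a constant-coefficient holomorphic differential operator in the base variable, $R_\ll$ turns into
$$R_\ll(\lz,\lo)=\S_{k,k'}\f{(\ll-n)_k\,(\ll-n)_{k'}}{(\ln+2\ll)_k\,(\ln+2\ll)_{k'}\,k!\,k'!}\[(\lz|\o\dl_z)^k\,\o{(\lo|\o\dl_w)}^{k'}\,\KL^{\ln,\ll}_{z,w}(\lz,\lo)\]_{z=w=0},$$
where $(\lz|\o\dl_z)$ is the substitution-derivation sending $(z|w)\mapsto(\lz|w)$ and $(z|\lo)\mapsto(\lz|\lo)$, and $\o{(\lo|\o\dl_w)}$ its $w$-analogue sending $(z|w)\mapsto(z|\lo)$ and $(\lz|w)\mapsto(\lz|\lo)$. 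A count of degrees in $\lz$ against those in $\lo$ (again the $K$-invariance of Lemma \er{e}) forces $k=k'$, so the double sum collapses to a single sum indexed by $k=\el-\ll$, and the squared intertwiner coefficient $\(\f{(\ll-n)_{\el-\ll}}{(\ln+2\ll)_{\el-\ll}}\)^2$ factors out precisely as in the statement.

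The heart of the argument is then the evaluation of the diagonal terms $[(\lz|\o\dl_z)^k\,\o{(\lo|\o\dl_w)}^k\,\KL^{\ln,\ll}_{z,w}]_{z=w=0}$. For the ball I would insert $\lD_{z,w}=1-(z|w)$, $\EL^\ll(\lz',\lo)=(\lz'|\lo)^\ll/\ll!$, and the explicit quasi-inverse data $B_{z,w}^{-1}\lz=\f{\lz}{1-(z|w)}+\f{(\lz|w)\,z}{(1-(z|w))^2}$, so that the kernel \er{31} expands as a finite combination of monomials in the four scalar invariants $(z|w),(\lz|w),(z|\lo),(\lz|\lo)$. Applying the two commuting substitution-derivations and setting $z=w=0$ keeps only those monomials in which every factor of $z$ is matched by an application of $(\lz|\o\dl_z)$ and every factor of $w$ by one of $\o{(\lo|\o\dl_w)}$; the two binomial factors then arise from two independent expansions — $\b{\ln+\ll-1}{p}$ from the (negative-binomial) series of the $\lD_{z,w}$-powers and $\b{\ll}{q}$ from the binomial expansion of the Bergman twist inside $\EL^\ll(B_{z,w}^{-1}\lz,\lo)$ — subject to $p+q=\el-\ll$, which Vandermonde's identity packages as $\S_{p+q=\el-\ll}\b{\ln+\ll-1}{p}\b{\ll}{q}$.

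The main obstacle is exactly this last bookkeeping step: tracking which of the two substitution-derivations lands on which invariant while organising the surviving monomials into the two disjoint binomial counts, and then matching the combined Pochhammer-and-factorial prefactors against the normalising constant $(\ln+2\ll)_{\el-\ll}$ coming from the intertwiner. Everything surrounding it — the block decomposition $\KL^{\ln,\cl}_{0,0}=\S_\ll c_\ll R_\ll$, the reduction to diagonal $k=k'$ terms, and the closed forms \er{31} and \er{33} — is either structural or already available, so once the diagonal evaluation is carried out the re-indexing $\el=\ll+k$ and the interchange of sums finish the proof.
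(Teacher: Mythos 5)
Your plan is essentially the paper's own proof: evaluate the covariant-kernel sum at $z=w=0$, insert the rank-one intertwiner $\IL_\ll=\S_k\f{(\ll-n)_k}{(\ln+2\ll)_k}\EL^k(\lz,\o\dl)$ together with the closed form \er{31} of the little kernel and the basis expansion \er{53}, observe that only the diagonal terms $k=k'$ survive, and collect the coefficient of $(\lz|\lo)^{\ll+k}$. The gap is in the one step you explicitly defer --- the bookkeeping you yourself call the heart of the argument --- which you assert with the wrong outcome. The two expansions are \emph{not} independent: since $B_{z,w}^{-1}\lz=\f1{1-(z|w)}\big(\lz+\f{(\lz|w)z}{1-(z|w)}\big)$, each of the $q$ cross-term factors $\f{(\lz|w)(z|\lo)}{1-(z|w)}$ selected in the binomial expansion of $\EL^\ll(B_{z,w}^{-1}\lz,\lo)$ carries an additional factor $(1-(z|w))^{-1}$, so the quasi-determinant power actually being expanded is $(1-(z|w))^{-\ln-\ll-q}$, whose coefficient of $(z|w)^p$ is $\b{\ln+\ll+q+p-1}p$. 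Under the constraint $p+q=\el-\ll$ this equals $\b{\ln+\el-1}p$, not $\b{\ln+\ll-1}p$ (which is not even the negative-binomial coefficient of $(1-x)^{-\ln-\ll}$; that would be $\b{\ln+\ll+p-1}p$). The appeal to Vandermonde's identity is also beside the point: nothing collapses the convolution sum $\S_{p+q=\el-\ll}$; it is simply left as it stands.

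The twist is that your incorrect coefficient reproduces the binomial printed in the theorem, whereas the computation just described --- which is what the paper's proof carries out --- yields $\lk^\ll_\el(\ln)=\(\f{(\ll-n)_{\el-\ll}}{(\ln+2\ll)_{\el-\ll}}\)^2\S_{p+q=\el-\ll}\b{\ln+\el-1}p\b{\ll}q$, so the statement as printed apparently carries a typo ($\ll$ in place of $\el$ in the first binomial). A sanity check settles which is right: for the disk with $n=1$, $\ll=0$, one has $\IL_0=1-\f1\ln\,\lz\dl$, the orthonormal basis $\lF^m_z=\F{(\ln)_m/m!}\,z^m$ of $H^2_\ln(\Dl,\Cl)$, hence $R_0(\lz,\lo)=1+\f{\lz\o\lo}{\ln}$; the coefficient $\f1\ln=\f{\ln}{\ln^2}$ matches $\b{\ln+\el-1}1=\ln$ (with $\el=1$), not $\b{\ln+\ll-1}1=\ln-1$ (with $\ll=0$). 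So your route is the right one, but a faithful execution of your own plan would contradict the formula you set out to verify: as written, the proposal validates the misprint by mis-expanding the kernel at exactly the coupled step.
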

\begin{proof} The explicit form $\IL_\ll=\S_{k}\lm^\ll_k\,\EL_k(\lz,\o\dl)$ obtained in Theorem \er{f} yields
$$\KL^{\ln,\cl}_{z,w}(\lz,\lo)=\S_\ll c_\ll\S_{k,h}\lm^\ll_k\lm^\ll_h\S_\la(\EL_k(\lz,\o\dl)\lF^\la)_z\o{(\EL_h(\lo,\o\dl)\lF^\la)_w}.$$
For $p\in\PL Z$ and $q\in\PL_\ll Z$ we have
$$\EL_k(\lz,\o\dl)\,p(z)q(\lz)|_{z=0}=p_k(\lz)q(\lz),$$
where $p_k\in\PL_k Z$ is the $k$-th component of $p.$ Using the decomposition \er{53} it follows that
\be{54}\KL^{\ln,\cl}_{0,0}(\lz,\lo)
=\S_\ll c_\ll\S_{k,h}\lm^\ll_k\lm^\ll_h\S_{\la,i,j}p_k^{\la,i}(\lz)q^{\la,i}(\lz)\o{p_h^{\la,j}(\lo)q^{\la,j}(\lo)}
=\S_\ll c_\ll\S_{k,h}\lm^\ll_k\lm^\ll_h\KL^{\ln,\ll}_{k,h}(\lz,\lo),\ee
where $\KL_{k,h}^{\ln,\ll}(\lz,\lo)$ denotes the component of degree $k$ in $z$ and degree $h$ in $w,$ resp. Since
$$B_{z,w}^{-1}\lz=B_{z^w,-w}\lz=(1+(z^w|w))(\lz+(\lz|w)z^w)=\(1+\f{(z|w)}{1-(z|w)}\)\(\lz+\f{(\lz|w)z}{1-(z|w)}\)
=\f1{1-(z|w)}\(\lz+\f{(\lz|w)z}{1-(z|w)}\)$$
it follows from \er{31} that
$$\KL^{\ln,\ll}_{z,w}(\lz,\lo)=(1-(z|w))^{-\ln}\,\f1{\ll!}(B_{z,w}^{-1}\lz|\lo)^\ll
=(1-(z|w))^{-\ln-\ll}\,\f1{\ll!}\((\lz|\lo)+\f{(\lz|w)(z|\lo)}{1-(z|w)}\)^\ll$$
$$=\f1{\ll!}\S_{q=0}^\ll\b{\ll}q(\lz|\lo)^{\ll-q}(\lz|w)^q(z|\lo)^q(1-(z|w))^{-\ln-\ll-q}$$
$$=\f1{\ll!}\S_{q=0}^\ll\b{\ll}q(\lz|\lo)^{\ll-q}(\lz|w)^q(z|\lo)^q\S_{p\ge 0}\b{\ln+\ll+q+p-1}p(z|w)^p.$$
Choosing the components of degree $k$ in $z$ and degree $h$ in $w$ we obtain non-zero terms for $k=h$ and
$$\KL^{\ln,\ll}_{k,k}(\lz,\lo)|_{0,0}=\f1{\ll!}(\lz|\lo)^{\ll+k}\S_{p+q=k}\b{\ln+\ll+k-1}p\b{\ll}q.$$
In view of \er{54} this implies
$$\KL^{\ln,\cl}_{0,0}(\lz,\lo)=\S_{\ll=0}^n\,c_\ll\S_{k=0}^{n-\ll}(\lm^\ll_k)^2\,\KL^{\ln,\ll}_{k,k}(\lz,\lo)$$
$$=\S_{\ll=0}^n\,\f{c_\ll}{\ll!}\S_{k=0}^{n-\ll}\(\f{(\ll-n)_k}{(\ln+2\ll)_k}\)^2\,(\lz|\lo)^{\ll+k}\S_{p+q=k}\b{\ln+\ll+k-1}p\b{\ll}q$$
$$=\S_{\el=0}^n(\lz|\lo)^\el\S_{\ll=0}^\el\(\f{(\ll-n)_{\el-\ll}}{(\ln+2\ll)_{\el-\ll}}\)^2
\S_{p+q=\el-\ll}\b{\ln+\el-1}p\b{\ll}q\,\f{c_\ll}{\ll!}$$
where in the last step we have put $\el=\ll+k.$
\end{proof}

\section{Boundedness of multiplication operators}
We will now apply the structure of reproducing kernels to study boundedness of multiplication operators
$$(f^\oc\lF)_z(\lz)=f(z)\lF_z(\lz)$$  
for bounded holomorphic functions $f\in\OL(D).$ The following Lemma is well-known. 
\begin{lemma}
Let $\HL$ be a Hilbert space consisting of holomorphic maps on some domain $D\ic\Cl^d$ taking values in $V=\Cl^k.$ Assume that it possesses a reproducing kernel $\KL.$ Then the multiplication by the coordinate functions is bounded on $H$ if and only if there exists a real number $b$ such that 
$$\LL_{z,w}:=(b^2-(z|w))\KL_{z,w}\qquad\forall\,z,w\in D$$
is positive (semi-definite) on $D,$ with values in $End(V).$
\end{lemma}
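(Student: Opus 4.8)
The plan is to reduce boundedness of the coordinate multiplications to a single operator inequality for the adjoint tuple, and then to translate that inequality into the positivity of $\LL$ by testing against reproducing kernel vectors. Write $M_j$ for multiplication by the coordinate $z_j$. Since the orthogonal complement of the linear span of all $\KL_w\lh$ ($w\in D,\ \lh\in V$) consists of those $\lF$ with $(\lF_w|\lh)_V=0$ for all $w,\lh$, hence of $\lF=0$, these kernel vectors span a dense subspace of $\HL$. Using the reproducing property $(\lF|\KL_w\lh)=(\lF_w|\lh)_V$ one computes, for any bounded $M_j$,
$$(\lF|M_j^*\KL_w\lh)=(M_j\lF|\KL_w\lh)=(w_j\lF_w|\lh)_V=(\lF_w|\o{w_j}\lh)_V=(\lF|\o{w_j}\KL_w\lh),$$
so that $M_j^*\KL_w\lh=\o{w_j}\KL_w\lh$. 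I first observe that multiplication by the coordinate functions is bounded precisely when $\S_{j=1}^d M_jM_j^*\le b^2\,I$ for some $b>0$: the row operator $(M_1,\dots,M_d)$ is bounded if and only if each $M_j$ is, and $\S_j M_jM_j^*\le b^2 I$ is exactly the statement $\Vert(M_1,\dots,M_d)\Vert\le b$.

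Next I evaluate both sides of this inequality on a finite kernel combination $\lF=\S_i\KL_{w_i}\lh_i$. Expanding by the reproducing property gives $\Vert\lF\Vert^2=\S_{i,k}(\KL_{w_k,w_i}\lh_i|\lh_k)_V$, while the eigenvalue relation together with $\S_{j=1}^d(w_i)_j\,\o{(w_k)_j}=(w_k|w_i)$ yields
$$\S_{j=1}^d\Vert M_j^*\lF\Vert^2=\S_{i,k}(w_k|w_i)\,(\KL_{w_k,w_i}\lh_i|\lh_k)_V.$$
Hence the estimate $\S_j\Vert M_j^*\lF\Vert^2\le b^2\Vert\lF\Vert^2$, ranging over all such $\lF$, is equivalent to
$$\S_{i,k}\big(\,(b^2-(w_k|w_i))\,\KL_{w_k,w_i}\lh_i\ \big|\ \lh_k\,\big)_V\ge 0,$$
which, using the Hermitian symmetry $\KL_{z,w}^*=\KL_{w,z}$ to see that the quadratic form is real, is exactly the positive semi-definiteness of the $End(V)$-valued kernel $\LL_{z,w}=(b^2-(z|w))\KL_{z,w}$.

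The forward implication is then immediate: if the $M_j$ are bounded, take $b^2=\Vert\S_j M_jM_j^*\Vert$, so that $\S_j M_jM_j^*\le b^2 I$, and restricting the identity above to kernel combinations forces $\LL\ge 0$. For the converse I deliberately avoid invoking $M_j^*$ at the outset, since $M_j$ is a priori only partially defined; instead I introduce operators $N_j$ on the dense span of kernel vectors by $N_j\,\KL_w\lh:=\o{w_j}\KL_w\lh$. Positivity of $\LL$ runs the computation backwards to give $\S_j\Vert N_j\lF\Vert^2\le b^2\Vert\lF\Vert^2$ on this span, so each $N_j$ extends to a bounded operator on $\HL$. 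A final reproducing-kernel calculation,
$$(N_j^*\lF|\KL_w\lh)=(\lF|N_j\KL_w\lh)=(\lF|\o{w_j}\KL_w\lh)=\o{w_j}(\lF_w|\lh)_V=(w_j\lF_w|\lh)_V,$$
shows $(N_j^*\lF)_w=w_j\lF_w$, i.e. $N_j^*=M_j$, so $M_j$ is bounded.

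I expect the main obstacle to lie in the converse direction, specifically in the a priori possible unboundedness, or mere partial definedness, of $M_j$ on $\HL$: one cannot legitimately speak of $M_j^*$ before $M_j$ is known to be bounded. Defining $N_j$ directly on the total set of kernel vectors, extending it by the positivity estimate, and only afterwards recovering $M_j=N_j^*$ circumvents this difficulty cleanly. The remaining points requiring care are the density of finite combinations of kernel vectors (so that the bounded extension is genuinely defined on all of $\HL$) and the use of the Hermitian symmetry of $\KL$ to guarantee that the tested quadratic form is real, so that the scalar inequality is equivalent to block-matrix positivity of $\LL$.
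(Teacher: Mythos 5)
Your proof is correct. For the implication (boundedness $\Rightarrow$ positivity of $\LL$) it coincides with the paper's argument: both use the density of finite combinations of kernel vectors, the eigenvalue relation $M_j^*\KL_w\lh=\o{w_j}\,\KL_w\lh$, and the identity expressing the quadratic form of $b^2-\S_{j}M_jM_j^*$ on such combinations as $\S_{i,k}(\LL_{w_k,w_i}\lh_i|\lh_k)_V$. The genuine difference lies in the converse: the paper does not prove it at all, but delegates it to \cite[Section 3.2]{BM}, whereas you supply a complete, self-contained argument. Your handling of the a priori status of $M_j$ --- declining to invoke $M_j^*$ before $M_j$ is known to be bounded, defining instead $N_j\KL_w\lh:=\o{w_j}\,\KL_w\lh$ on the span of kernel vectors, extending boundedly via the positivity estimate, and only then recovering $M_j=N_j^*$ by a reproducing-kernel computation --- is exactly the subtlety that the paper's citation hides, and it is the right way to resolve it. Only one sentence is missing from your write-up: you should note that $N_j$ is well defined on the span, i.e., that a vanishing combination $\S_i\KL_{w_i}\lh_i=0$ is necessarily mapped to zero. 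This follows from your own estimate, since the inequality $\S_j\Vert N_j\lF\Vert^2\le b^2\Vert\lF\Vert^2$ is established for formal combinations (both sides being computed from the data $(w_i,\lh_i)$ via the kernel), so any representation of the zero vector has image of norm zero. With that observation made explicit, your argument is strictly more complete than the proof given in the paper.
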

\begin{proof} A dense subset of $\HL$ is obtained by taking finite sums
$$\lF=\S_h\KL_{z_h}\lq_h$$
of kernel vectors, with $z_h\in D$ and $\lq_h\in V.$ The eigenvalue equation
$$\o z_i^\oc\KL_z\lq=\o z^i\,\KL_z\lq$$
implies that $T:=\S_{i=1}^d z_i^\oc\,\o z_i^\oc$ satisfies
$$(\lF|T\lF)=\S_{i=1}^d\S_{h,k}(\o z_i^\oc\KL_{z_h}\lq_h|\o z_i^\oc\KL_{z_k}\lq_k)=\S_{i=1}^d\S_{h,k}z_h^i\,\o z_k^i(\KL_{z_h}\lq_h|\KL_{z_k}\lq_k)=\S_{h,k}(z_h|z_k)(\lq_h|\KL_{z_h,z_k}\lq_k)$$
and, similarly,
$$(\lF|(b^2-T)\lF)=\S_{h,k}(b^2-(z_h|z_k))(\KL_{z_h}\lq_h|\KL_{z_k}\lq_k)=\S_{h,k}(\lq_h|\LL_{z_h,z_k}\lq_k).$$
This proves that the boundedness of the multiplication by the coordinate functions implies positivity of the kernel $\LL.$
The proof in the other direction follows exactly as in \cite[Section 3.2]{BM}
\end{proof}
As an application we consider the classical case of the Hilbert space $H_\ln$ of holomorphic functions on the unit ball $D=\Bl_d\ic\Cl^d,$ with reproducing kernel $\lD_{z,w}^{-\ln}=(1-(z|w))^{-\ln},$ for $\ln>0.$ The Bergman space corresponds to $\ln=d+1.$ The binomial expansion shows that $\lD^{-\ln}_{z,w}$ is a positive definite kernel on $\Bl_d$ for all $\ln> 0.$ The monomials form an orthogonal basis in the  Hilbert space $H^2_\ln(D,\Cl)$ determined by this positive definite kernel and for the norm we have
$$\|z^I\|_\ln^{-1}=(-1)^{|I|}\b{-\ln}{|I|}\b{|I|}{i_1\,\cdots,i_d},$$
where $I=(i_1,\ldots,i_d)$ and $|I|=i_1+\cdots+i_d.$ The multiplication by the coordinate functions $z_1^\oc,\ldots,z_d^\oc$ on 
$H^2_\ln(D,\Cl)$ are bounded. For $\ln\ge d,$ the inner product of $H^2_\ln(D,\Cl)$ is induced by a measure supported on the closed unit ball (if $\ln=d,$ the support is the boundary of the ball). In this case, the boundedness is evident. In general, the boundedness is established by showing that the operator $z_1^\oc\o z_1^\oc+\cdots+z_d^\oc\o z_d^\oc$ is the diagonal operator $\f{|I|+1}{\nu+|I|}{\rm Id},$ where ${\rm Id}$ is the identity operator on the monomials of degree $|I|.$  Clearly, this diagonal operator on 
$H^2_\ln(D,\Cl)$ is bounded for all $\ln>0.$ However, a different argument using the above Lemma will be useful in the more general context. Thus to prove that the multiplication operators $z_1^\oc,\ldots,z_d^\oc$ are bounded on $H^2_\ln(D,\Cl),$ it is enough to show that $(b^2-(z|w))\lD^{-\nu}_{z,w}$ is a positive kernel for some $b\in\Rl.$ Now we have
$$(b^2-(z|w))\lD^{-\ln}_{z,w}=b^2+\S_{k=0}^\oo(-1)^{k+1}\(\b{-\ln}{k+1}+\b{-\ln}{k}\)(z|w)^{k+1}
=b^2+\S_{k=0}^\oo\f{(\ln)_k}{(k+1)!}\(b^2-\f{k+1}{\ln+k}\)(z|w)^{k+1}.$$
Therefore, if $\ln\ge 1,$ choosing $b=1$ will ensure the positivity of $(b^2-(z|w))\lD^{-\nu}_{z,w}.$ On the other hand, for $0<\ln<1,$ choosing $b=\f1\ln$ will do. \\

In the general setting, let $\lp$ be an irreducible holomorphic representation of $K^\Cl,$ with highest weight $\lL_0.$ The {\bf Wallach set} $W_\lp,$ consisting of all parameters $\ln\ge 0$ such that the corresponding (scalar or matrix valued) kernel $\KL_{z,w}^\ln$ is positive, is a union of an open half-line $\{\ln:\,\ln>\ln_\lp\}$ (continuous part) and a finite subset of $[0,\ln_\lp]$ (discrete part) \cite{W}. For the scalar-valued Bergman spaces, corresponding to the trivial representation, the Faraut-Koranyi formula \er{23} implies
$$W_0=\{i\f a2:\,0\le i\le r-1\}\iu\{\ln:\,\ln>\f a2(r-1)\}.$$
Thus the cut-off point in the scalar case is
$$\ln_0=\f a2(r-1).$$
In particular, $\ln_0=0$ for the unit ball. Now consider the kernel functions $\KL_{z,w}^{\ln,\ll}$ associated with a partition 
$\ll.$ Let $W_\ll$ denote the corresponding Wallach set, with cutoff parameter $\ln_\ll.$ The empty partition $\ll=0$ corresponds to the scalar case. By \cite[Theorem 2.1]{FK} %and also \cite{OZ}, 
the representation $\PL_\ll Z$ has the highest weight
$$\lL_0=-\S_{i=1}^r \ll_i\lg_i$$
with respect to the Harish-Chandra strongly orthogonal roots $\lg_1<\ldots<\lg_r.$ Consider the character
$$\lL_1(u\Box v^*)=\f12(u|v)=\f1p tr(u\Box v^*)$$ on $\kL^\Cl,$ normalized by
$$\lL_1 H_{\lg_i}=2\lL_1(e_i\Box e_i^*)=(e_i|e_i)=1\qquad\forall\,1\le i\le r.$$
Then we have
$$(e^{-\ln\lL_1}\xt\lp)(B_{x,x}^{-1})=(e^{\ln\lL_1}B_{x,x})\,B_{x,x}^{-\lp}=\det B_{x,x}^{\ln/p}\,B_{x,x}^{-\lp}=\lD_{x,x}^\ln\,B_{x,x}^{-\lp}.$$
Since $\dl_0\gl_x=B_{x,x}^{1/2}$ it follows that the inner product for the little Hilbert spaces \er{63} corresponds to the choice of holomorphic representation $e^{-\ln\lL_1}\xt\lp.$ By \cite{W} there exists a cut-off $\ln_\ll$ such that the continuous part of the Wallach set is given by $\ln>\ln_\ll.$ In the special case of the unit disk $\Dl$, the little Hilbert spaces $H^2_\ln(\Dl,\PL_\ll\Cl)$ are generated by the kernel functions
$$(1-z\o w)^{-\ln}(1-z\o w)^{-2\ll}=(1-z\o w)^{-\ln-2\ll}$$
for $0\le\ll\le n.$ Thus for the unit disk the continuous part of the Wallach set is determined by $\ln+2\ll>0.$ In general, we have 
\begin{proposition}\label{h} Let $\ln>\ln_\ll+\f a2(r-1).$ Then the multiplication operators $z_i^\oc$ by the coordinate functions are bounded on the Hilbert space $H^2_\ln(D,\PL_\ll Z).$  
\end{proposition}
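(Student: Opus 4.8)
The plan is to invoke the boundedness Lemma of this section for the finite-dimensional space $V=\PL_\ll Z$: it suffices to exhibit a real number $b$ for which the $\mathrm{End}(\PL_\ll Z)$-valued kernel
$$\LL_{z,w}:=(b^2-(z|w))\,\KL^{\ln,\ll}_{z,w}$$
is positive on $D$. The device I would use is the factorization of the little-space kernel coming from \er{31}, namely $\KL^{\ln,\ll}_{z,w}=\lD_{z,w}^{-\ln}\,\JL^\ll_{z,w}$ with $\JL^\ll_{z,w}$ \emph{independent} of $\ln$. Since $\ln>\ln_\ll+\f a2(r-1)=\ln_\ll+\ln_0$, I can split $\ln=\ln'+\ln''$ with $\ln'>\ln_\ll$ and $\ln''>\ln_0$ (for instance $\ln'=\ln_\ll+\f\ld2$, $\ln''=\ln_0+\f\ld2$, where $\ld=\ln-\ln_\ll-\ln_0>0$), and then factor
$$\KL^{\ln,\ll}_{z,w}=\lD_{z,w}^{-\ln''}\(\lD_{z,w}^{-\ln'}\,\JL^\ll_{z,w}\)=\lD_{z,w}^{-\ln''}\,\KL^{\ln',\ll}_{z,w}.$$

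With this splitting the two factors are controlled independently. The operator factor $\KL^{\ln',\ll}_{z,w}$ is a positive $\mathrm{End}(\PL_\ll Z)$-valued kernel, since $\ln'>\ln_\ll$ lies in the continuous part of the Wallach set $W_\ll$. For the scalar factor I claim that $(b^2-(z|w))\,\lD_{z,w}^{-\ln''}$ is a positive kernel for a suitable $b$. Granting this, $\LL_{z,w}$ is the pointwise product of a positive scalar kernel with a positive operator-valued kernel, hence positive by the operator-valued Schur (Hadamard) product theorem; the Lemma then gives boundedness of the $z_i^\oc$ and finishes the proof. Thus everything reduces to the \emph{scalar} estimate, i.e. to boundedness of coordinate multiplication on $H^2_{\ln''}(D,\Cl)$ for $\ln''>\ln_0$, which generalizes the rank-one computation carried out above for the ball.

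To establish the scalar estimate I would argue entirely on the level of the Faraut--Kor\'anyi expansion \er{23}, $\lD_{z,w}^{-\ln''}=\S_\lm(\ln'')_\lm\,\EL^\lm_{z,w}$, whose coefficients $(\ln'')_\lm$ are all strictly positive precisely because $\ln''>\f a2(r-1)$. Combined with the Pieri-type rule $(z|w)\,\EL^\lm_{z,w}=\S_i c^\lm_i\,\EL^{\lm+\Le_i}_{z,w}$, whose structure constants satisfy $c^\lm_i\ge0$, this yields
$$(b^2-(z|w))\,\lD_{z,w}^{-\ln''}=\S_\lm\(b^2(\ln'')_\lm-\S_i(\ln'')_{\lm-\Le_i}\,c^{\lm-\Le_i}_i\)\EL^\lm_{z,w}.$$
Since each $\EL^\lm_{z,w}$ is itself a positive kernel, positivity holds as soon as every coefficient is nonnegative, i.e.
$$b^2\ge\S_i\f{(\ln'')_{\lm-\Le_i}}{(\ln'')_\lm}\,c^{\lm-\Le_i}_i=\S_i\f{c^{\lm-\Le_i}_i}{\ln''-\f a2(i-1)+\lm_i-1}\qquad\forall\,\lm,$$
where the last equality uses $(\ln'')_\lm/(\ln'')_{\lm-\Le_i}=\ln''-\f a2(i-1)+\lm_i-1\ge\ln''-\ln_0>0$.

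The hard part will be to bound the right-hand side uniformly in $\lm$. A naive estimate fails: the total incoming Pieri mass is $\S_i c^{\lm-\Le_i}_i=|\lm|$ (obtained by comparing $(z|w)\,e^{(z|w)}=\S_\lm|\lm|\,\EL^\lm_{z,w}$ with its Pieri expansion), so bounding each ratio merely by $1/(\ln''-\ln_0)$ gives the useless estimate $|\lm|/(\ln''-\ln_0)$. The point is rather the finer, per-row growth $c^{\lm-\Le_i}_i=O(\lm_i)$ with a constant depending only on the multiplicities $a,b,r$; dividing by the denominator $\ln''-\f a2(i-1)+\lm_i-1$, which also grows like $\lm_i$, each of the at most $r$ summands stays bounded, so a finite $b$ exists. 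This is exactly where $\ln''>\f a2(r-1)$ enters, and in rank one it collapses to the explicit ratio $\f{k+1}{\ln''+k}$ found above; carrying out the corresponding linear bound on the Pieri coefficients $c^\lm_i$ for arbitrary rank is the one genuinely technical step.
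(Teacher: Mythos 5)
Your reduction coincides with the paper's proof line for line: split $\ln=\ln'+\ln''$ with $\ln'>\ln_\ll$ and $\ln''>\f a2(r-1)$ (the paper takes $\ln'=\ln_\ll+\Le$, $\ln''=\f a2(r-1)+\Le$), factor $\KL^{\ln,\ll}_{z,w}=\lD_{z,w}^{-\ln''}\,\KL^{\ln',\ll}_{z,w}$ using the $\ln$-independence of $\JL^\ll_{z,w}$ in \er{31}, observe that $\KL^{\ln',\ll}_{z,w}$ is positive because $\ln'$ lies in the continuous part of the Wallach set $W_\ll$, and conclude via the Schur product of positive kernels together with the boundedness Lemma. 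Up to this point your argument is exactly the paper's.

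The divergence --- and the gap --- is in how the scalar factor $(b^2-(z|w))\,\lD_{z,w}^{-\ln''}$ is shown to be positive for some $b$. The paper does not prove this; it invokes the known boundedness of coordinate multipliers on the scalar spaces $H^2_{\ln''}(D,\Cl)$ for $\ln''>\f a2(r-1)$, citing \cite[Theorem 4.1]{AZ} and \cite[Theorem 1.1]{BM}, and then applies the ``only if'' direction of the Lemma to produce $b$. You instead attempt a self-contained proof via the Faraut--Kor\'anyi expansion and a Pieri-type rule, and your computation rests on two facts you do not establish: the nonnegativity of the Pieri coefficients $c^\lm_i\ge 0$, and, above all, the uniform per-row bound $c^{\lm-\Le_i}_i=O(\lm_i)$ with constants depending only on $a$ and $r$. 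As you yourself note, the crude sum rule $\S_i c^{\lm-\Le_i}_i=|\lm|$ is useless here, since it allows the whole mass to concentrate in a single row; without the finer bound the coefficients of $\EL^\lm$ in your expansion are not controlled and the argument does not close. So, taken as a standalone proof, your proposal has a genuine gap at precisely the one step the paper outsources to the literature. The bound you want is true --- it can be extracted from the explicit Pieri formula for Jack polynomials with parameter $2/a$, and in rank one it collapses to the ratio $\f{k+1}{\ln''+k}$ appearing in the paper's unit-ball computation --- but proving it in arbitrary rank is a nontrivial piece of special-function theory rather than a routine verification. The economical repair is simply to cite the scalar boundedness result, as the paper does; if you do carry out the Pieri estimate, you would obtain a self-contained proof of that scalar input as a byproduct.
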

\begin{proof} Since $\ln=\ln_\ll+\Le+\f a2(r-1)+\Le$ for some $\Le>0$ by assumption, it follows that 
$$(b^2-(z|w))(\KL_w^{\ln,\ll}\lF)_z(\lz)=(b^2-(z|w))\lD_{z,w}^{-\ln}\lF(B^{-1}_{z,w}\lz)
=\((b^2-(z|w))\lD_{z,w}^{-a(r-1)/2-\Le}\)\lD_{z,w}^{-\ln_\ll-\Le}\lF(B^{-1}_{z,w}\lz).$$
By the definition of $\ln_\ll,$ we see that $\lD_{z,w}^{-\ln_\ll-\Le}\lF(B^{-1}_{z,w}\lz)$ is positive for any $\Le>0.$ 
It is well-known (cf. \cite[Theorem 4.1]{AZ} and \cite[Theorem 1.1]{BM}) that the multiplication operators on the weighted Bergman spaces $H^2_\ln(D,\Cl)$ are bounded for $\ln>\f a2(r-1).$ Hence there exists $b$ for which $(b^2-(z|w))\lD_{z,w}^{-a(r-1)/2-\Le}$ is positive. Since the Schur product (point-wise product) of two positive kernels is again positive, the proof is complete.
\end{proof} 
As the main result of this section we give a criterion for boundedness on the 'big' Hilbert space $H^2_\ln(D,\PL_\cl^nZ),$ generalizing the argument given in \cite[Theorem 4.1]{KM0}.
\begin{lemma} Consider the matrix $A(\ln)=(a_\lm^\ll(\ln))$ defined in \er{37}. Then for any choice $\ln,\ln_*>\f a2(r-1)$, the open set
$$\CL_{\ln_*}^\ln:=\{\cl>0:\,A(\ln_*)^{-1}\,A(\ln)\cl>0\}$$
is non-empty.
\end{lemma}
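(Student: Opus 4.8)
The plan is to reduce the statement to an elementary fact about unipotent lower-triangular matrices, exploiting the structure of $A(\ln)$ recorded in Theorem \er{g}. First I would fix a total order on the partitions $\ll\le n$ refining the inclusion order $\ic$, enumerating them as $\lm_1,\ldots,\lm_N$ so that $\lm_i\ic\lm_j$ forces $i\le j$. By Theorem \er{g} the matrix $A(\ln)=(a_\lm^\ll(\ln))$ of \er{37} is lower triangular in this order (its entry $a_\lm^\ll$ vanishes unless $\ll\ic\lm$) with all diagonal entries $a_\ll^\ll=1$, and the same holds for $A(\ln_*)$. Hence both are invertible, their inverses are again lower triangular with unit diagonal, and so is the product
$$B:=A(\ln_*)^{-1}A(\ln)=(B_\lm^\ll),\qquad B_\ll^\ll=1,\quad B_\lm^\ll=0\ \text{ unless }\ \ll\ic\lm.$$
Thus the claim becomes the purely linear-algebraic assertion that a lower-triangular matrix $B$ with $1$'s on the diagonal admits a vector $\cl>0$ with $B\cl>0$, both inequalities understood entrywise.

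To produce such a $\cl$ I would argue inductively along the order. Because
$$(B\cl)_{\lm_i}=c_{\lm_i}+\S_{j<i}B_{\lm_i}^{\lm_j}\,c_{\lm_j}$$
depends only on $c_{\lm_1},\ldots,c_{\lm_i}$, I set $c_{\lm_1}=1$ and, having fixed positive values $c_{\lm_1},\ldots,c_{\lm_{i-1}}$, I choose $c_{\lm_i}>0$ large enough to dominate the already-determined quantity $-\S_{j<i}B_{\lm_i}^{\lm_j}c_{\lm_j}$. This forces $(B\cl)_{\lm_i}>0$ for every $i$ while keeping $\cl>0$. An equivalent and tidier choice is the one-parameter family $c_{\lm_i}=M^i$: then $(B\cl)_{\lm_i}=M^i+\S_{j<i}B_{\lm_i}^{\lm_j}M^j$, whose leading term $M^i$ dominates for large $M$, so that for $M$ sufficiently large (only finitely many strict inequalities are involved) one has simultaneously $\cl>0$ and $B\cl>0$. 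Either way the set $\CL_{\ln_*}^\ln$ is non-empty.

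To round things off I would remark that $\CL_{\ln_*}^\ln$ is indeed open, as the notation asserts: the map $\cl\mapsto B\cl$ is linear hence continuous, the entrywise positivity cone is open, and $\{\cl>0\}$ is open, so the set is an intersection of two open sets. The only point demanding a little care, and the place I would flag as the potential obstacle, is the passage from the partial inclusion order on partitions to a compatible total order, which is what lets one speak of ''lower triangular'' unambiguously and run the induction; everything else is the standard bookkeeping that inverses and products of unipotent triangular matrices remain unipotent triangular. Note that the continuity of $A(\ln)$ in $\ln$ plays no role in the non-emptiness (it matters only later, when $\ln$ is varied), so the conclusion holds for each fixed pair $\ln,\ln_*>\f a2(r-1)$, with no relation imposed between the two parameters.
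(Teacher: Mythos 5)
Your proof is correct and takes essentially the same approach as the paper: the paper's proof consists of the single observation that, since the matrices $A(\ln)$ are lower-triangular (with unit diagonal by Theorem \er{g}), the condition $A(\ln_*)^{-1}A(\ln)\cl>0$ amounts to finitely many inequalities that can be solved one-by-one, which is exactly the induction you carry out. The details you supply---refining the inclusion order on partitions to a total order, noting that $B=A(\ln_*)^{-1}A(\ln)$ is again unipotent lower-triangular, and choosing each $c_{\lm_i}$ large enough to dominate the previously fixed terms---are precisely what the paper leaves implicit.
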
 
\begin{proof} The condition
$$\cl'=A(\ln_*)^{-1}\,A(\ln)\cl>0$$
leads to a finite set of inequalities which can be solved one-by-one since the matrices $A(\ln)$ are lower-triangular. 
\end{proof}
\begin{theorem} Let $\ln_n:=max\{\ln_\ll:\,\ll\le n\}.$ Then the multiplication operators $z_1^\oc,\ldots,z_d^\oc$ are bounded on the Hilbert space $H^2_\ln(D,\PL_\cl^nZ)$ whenever  $\ln>\ln_n+\f a2(r-1)$  and $\cl\in\CL^\ln_{\ln_n}.$ For the unit ball, the multiplication operators are bounded for $\ln>\ln_n$ and every $\cl>0.$  
\end{theorem}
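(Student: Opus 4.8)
The plan is to invoke the boundedness Lemma above, which reduces the first assertion to producing a real number $b$ for which the $End(\PL_\cl^nZ)$-valued kernel $(b^2-(z|w))\,\KL^{\ln,\cl}_{z,w}$ is positive. I would compare the parameter $\ln$ against the reference value $\ln_n$, at which the \emph{little} kernels are already positive, exploiting that the only genuinely $\ln$-dependent part of the big kernel is a scalar Bergman factor.

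First I would record the factorization that drives everything. Put $\cl':=A(\ln_n)^{-1}A(\ln)\,\cl$, which is meaningful since the lower-triangular matrix $A(\ln_n)$ of \er{37} has unit diagonal and is therefore invertible. The relation $A(\ln_n)\,\cl'=A(\ln)\,\cl$ together with Theorem \er{g} shows that the two origin kernels $\KL^{\ln,\cl}_{0,0}$ and $\KL^{\ln_n,\cl'}_{0,0}$ coincide as $K$-invariant sesqui-polynomials on $Z\xx\o Z$. Next I would observe that the full big kernel is reconstructed from its value at the origin by an $\ln$-\emph{independent} covariant transport: by the covariance formula (the analogue of \er{38} and \er{36} for the representation \er{24}) the kernel $\KL^{\ln,\cl}_{z,w}$ equals the scalar factor $\lD_{z,w}^{-(\ln+n)}$, arising from the $\det(\dl_zg)^{(\ln+n)/p}$ part of the multiplier, times a conjugation of $\KL^{\ln,\cl}_{0,0}$ by the cocycle values $[\gl_x]_0=B_{x,x}^{1/2}\,\o\tL_x$. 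Only the exponent $\ln$ of this scalar factor varies with $\ln$; the power $\lD_{z,w}^{-n}$ and the $[\gl_x]_0$-conjugation depend on $n$ but not on $\ln$. Applying the same transport to the same origin kernel therefore yields
$$\KL^{\ln,\cl}_{z,w}=\lD_{z,w}^{-(\ln-\ln_n)}\,\KL^{\ln_n,\cl'}_{z,w}.$$

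It then remains to verify two positivity statements and take their Schur product. Since $\cl\in\CL^\ln_{\ln_n}$ means precisely $\cl'>0$, the kernel $\KL^{\ln_n,\cl'}_{z,w}$ is positive: for every $\ln'>\ln_n$ all little spaces $H^2_{\ln'}(D,\PL_\ll Z)$ with $\ll\le n$ exist, so $\KL^{\ln',\cl'}_{z,w}=\S_\ll c'_\ll\S_\la(\IL_\ll\lF^\la)_z\,\o{(\IL_\ll\lF^\la)_w}$ is a manifestly positive Gram kernel, and $\KL^{\ln_n,\cl'}_{z,w}$ is its pointwise limit as $\ln'\downarrow\ln_n$ (using continuity of $A$ and of the transport), hence positive semi-definite. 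On the other hand $\ln-\ln_n>\f a2(r-1)=\ln_0$, so by Proposition \er{h} with $\ll=0$ (equivalently \cite{AZ,BM}) the coordinate multiplications are bounded on the scalar weighted Bergman space $H^2_{\ln-\ln_n}(D,\Cl)$; thus there is a $b$ for which $(b^2-(z|w))\,\lD_{z,w}^{-(\ln-\ln_n)}$ is positive. Taking the pointwise (Schur) product of these two positive kernels gives
$$(b^2-(z|w))\,\KL^{\ln,\cl}_{z,w}=\[(b^2-(z|w))\,\lD_{z,w}^{-(\ln-\ln_n)}\]\,\KL^{\ln_n,\cl'}_{z,w}\ge 0,$$
which by the Lemma proves boundedness.

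For the unit ball one has $r=1$, so $\f a2(r-1)=0$ and the hypothesis collapses to $\ln>\ln_n$. Here the explicit kernel computed above, with coefficients $\lk^\ll_\el(\ln)$, makes the triangular matrices $A(\ln)$ and $A(\ln_n)$ fully explicit, and a direct inspection of $A(\ln_n)^{-1}A(\ln)$ shows that it maps every $\cl>0$ to a vector $\cl'>0$, so that $\CL^\ln_{\ln_n}=\{\cl>0\}$ and the sharper conclusion follows from the previous paragraphs. I expect the main obstacle to be the factorization step: one must justify carefully that the covariant reconstruction of $\KL^{\ln,\cl}_{z,w}$ from its origin value carries \emph{all} of its $\ln$-dependence in the single scalar power $\lD_{z,w}^{-\ln}$, so that equality of the origin kernels upgrades to the global scalar factorization; the positivity at the boundary value $\ln_n$ is a secondary point settled by the limiting argument above.
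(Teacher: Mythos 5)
Your argument for the first (general rank) assertion is essentially the paper's own proof: you match the origin kernels through the triangular matrix $A(\ln)$ of \er{37}, use the fact that the covariant transport \er{38}--\er{40} carries all of the $\ln$-dependence of $\KL^{\ln,\cl}_{z,w}$ in the scalar power of $\lD_{z,w}$ and in the origin kernel $\KL^{\ln,\cl}_{0,0}$, and finish with a Schur product against $(b^2-(z|w))\lD_{z,w}^{\ln'-\ln}$. The only deviation is that you take the comparison parameter to be $\ln'=\ln_n$ itself and then argue positivity of the boundary object $\KL^{\ln_n,\cl'}_{z,w}$ by a pointwise limit $\ln'\downarrow\ln_n$ with $\cl'$ held fixed, whereas the paper perturbs to $\ln'=\ln_n+\Le$, where all little spaces genuinely exist, using continuity of $A$ to keep $\cl'=A(\ln')^{-1}A(\ln)\cl>0$. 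Your variant is acceptable provided you make explicit that $\KL^{\ln_n,\cl'}_{z,w}$ is \emph{defined} by the transport formula --- it is not the reproducing kernel of any of the constructed Hilbert spaces, since for a partition realizing $\ln_\ll=\ln_n$ the little space fails to exist at the cut-off --- and positivity then indeed passes to the pointwise limit; both versions use continuity of $A$ at $\ln_n$ in the same way.

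The unit ball case, however, has a genuine gap. You assert that ``direct inspection of $A(\ln_n)^{-1}A(\ln)$'' shows $\CL^\ln_{\ln_n}=\{\cl>0\}$. This inspection is never carried out, and in fact it cannot be: for the ball one has $\ln_n=\ln_0=0$ (the maximum of the cut-offs is attained by the scalar case, cf.\ the disk values $\ln_\ll=-2\ll$), while the explicit coefficients $\lk^\ll_\el(\ln)$ carry $\((\ln+2\ll)_{\el-\ll}\)^2$ in their denominators, so the entries $a^0_\el(\ln)$ for $1\le\el\le n$ blow up as $\ln\downarrow 0$; for the disk with $n=1$ one gets $a^0_1(\ln)=(\ln-1)/\ln^2$. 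Thus $A(\ln_n)$ does not exist, the set $\CL^\ln_{\ln_n}$ is not even defined in rank one, and your reduction of the second assertion to the first collapses. The paper's rank-one argument perturbs at the \emph{other} endpoint: choose $\Le>0$ small, set $\ln':=\ln-\Le>\ln_n$ and $\cl':=A(\ln-\Le)^{-1}A(\ln)\cl$; then $\cl'\to\cl>0$ as $\Le\to 0$ by continuity of $A$ near $\ln$ (where it is defined), so $\cl'>0$ for small $\Le$, and since $\f a2(r-1)=0$ in rank one the gap $\ln-\ln'=\Le>0$ already makes $(b^2-(z|w))\lD_{z,w}^{-\Le}$ positive for a suitable $b$. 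Replacing your final paragraph by this perturbation argument repairs the proof.
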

\begin{proof}  By \er{38} and \er{39}, we have
$$\KL^{\ln,\cl}_{z,w}=\lD_{z,w}^{-\ln}\,\JL^{\ln,\cl}_{z,w}\qquad\forall\,z,w\in D,$$
where
\be{40}\JL_{z,w}^{\ln,\cl}=B_{z,w}^{\d\lp_n/2}\,\o\tL_w^{\d\lp_n}\,\KL_{0,0}^{\ln,\cl}\,\o\tL_z^{\d\lp_n\,*}\,B_{z,w}^{\d\lp_n/2}.\ee
Now let $\ln>\ln_n+\f a2(r-1).$ Assume that $\cl>0$ and $\ln_n<\ln'<\ln-\f a2(r-1)$ satisfy
$$\cl'=A(\ln')^{-1}A(\ln)\cl>0.$$
Then $A(\ln)\cl=A(\ln')\cl'$ and Theorem \er{g} implies $\KL_{0,0}^{\ln,\cl}=\KL_{0,0}^{\ln',\cl'}$. By \er{40}, it follows that 
$\JL^{\ln,\cl}_{z,w}=\JL^{\ln',\cl'}_{z,w}.$ 
Since $\ln-\ln'>\f a2(r-1),$ it follows as in the proof of Proposition \er{h} that $(b^2-(z|w))\lD_{z,w}^{\ln'-\ln}$ is positive for some constant $b$. For each partition $\ll\le n$  we have $\ln'>\ln_\ll$ by assumption, and hence $\ln'$ belongs to the Wallach set for $\ll,$ i.e., $\KL_{z,w}^{\ln',\ll}$ is positive. Since $\cl'>0$ it follows from \er{35} that $\KL_{z,w}^{\ln',\cl'}$ is also positive.
Thus
$$(b^2-(z|w))\KL^{\ln,\cl}_{z,w}=(b^2-(z|w))\lD_{z,w}^{-\ln}\JL^{\ln,\cl}_{z,w}=\((b^2-(z|w))\lD_{z,w}^{\ln'-\ln}\)\lD_{z,w}^{-\ln'}\JL^{\ln,\cl}_{z,w}$$
$$=\((b^2-(z|w))\lD_{z,w}^{\ln'-\ln}\)\lD_{z,w}^{-\ln'}\JL^{\ln',\cl'}_{z,w}=\((b^2-(z|w))\lD_{z,w}^{\ln'-\ln}\)\KL^{\ln',\cl'}_{z,w}.$$
is again realized as the Schur product of two positive kernels. In the {\bf general case} suppose that $\ln>\ln_n+\f a2(r-1)$ and 
$\cl\in\CL^\ln_{\ln_n}.$ Then $A(\ln_n)^{-1}A(\ln)\cl>0.$ Since $A(\ln)$ depends continuously on the parameter $\ln$ there exists 
$\Le>0$ such that $\ln>\ln_n+\f a2(r-1)+\Le$ and $\cl':=A(\ln_n+\Le)^{-1}A(\ln)\cl>0.$ Putting $\ln':=\ln_n+\Le$ we have 
$A(\ln')\cl'=A(\ln)\cl$ and $\ln-\ln'>\f a2(r-1).$ Thus the conditions for the above argument hold. In the {\bf rank $1$-case}, suppose that $\ln>\ln_n$ and $\cl>0.$ By continuity, there exists $\Le>0$ such that $\ln':=\ln-\Le>\ln_n$ and $\cl':=A(\ln-\Le)^{-1}A(\ln)\cl>0.$
\end{proof}

\section{Irreducibility} 
Unlike \er{19}, the representation \er{24} is not irreducible since the 'typical fibre' $\PL^nZ$ is not irreducible under $K.$ Remarkably, as shown in this section, the 'big' Hilbert space is still irreducible as a {\bf Hilbert module}, i.e., under the multiplication action by bounded holomorphic functions and their adjoints. This result leads to the important problem to classify all irreducible representations of the Toeplitz $C^*$-algebra acting on $H^2_\ln(D,\PL^nZ),$ solved for scalar-valued Bergman spaces in \cite{U1,U2}.

Let $A:\HL\to\HL^\1$ be an intertwining operator between reproducing kernel Hilbert spaces of holomorphic maps $D\to V$, in the sense that
$$f^\oc A=A f^\oc$$
for all holomorphic multipliers $f$ on $D.$ Here $f^\oc$ denotes the module (multiplication) action.
\begin{lemma} Consider an orthonormal basis $\lf_i\in V.$ Then any holomorphic map $\lF$ in $\HL$ has a representation
$$\lF=\S_i(\lf_i|\lF)^\oc\lf_i,$$
where the multiplier is induced by the holomorphic function $z\mapsto(\lf_i|\lF_z).$ 
\end{lemma}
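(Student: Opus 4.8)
The plan is to reduce the statement to the ordinary orthonormal-basis expansion of $\lF_z$ in the fibre $V$, together with the observation that the resulting coefficients are holomorphic multipliers. First I would fix $z\in D$ and write the expansion $\lF_z=\S_i(\lf_i|\lF_z)_V\,\lf_i$ in $V$; since our inner products are conjugate-linear in the first slot and $(\lf_j|\lf_i)_V=\ld_{ij}$, the coefficients are exactly $(\lf_i|\lF_z)_V$. Next I would note that for each fixed $i$ the map $v\mapsto(\lf_i|v)_V$ is complex-linear on $V$, so its composition with the holomorphic map $z\mapsto\lF_z$ is a holomorphic scalar function on $D$. This function is precisely the multiplier $(\lf_i|\lF)$ named in the statement, so no separate argument for its holomorphy is required beyond complex-linearity of the functional.

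The second step is to identify each summand with the module action applied to a constant section. Viewing $\lf_i$ as the constant holomorphic map $z\mapsto\lf_i$ in $\HL$, the definition of the multiplication action gives $((\lf_i|\lF)^\oc\lf_i)_z=(\lf_i|\lF_z)_V\,\lf_i$ for every $z\in D$. Because $\dim V<\oo$ the sum over $i$ is finite, so there is no convergence question, and combining with the expansion of the first step yields $\S_i((\lf_i|\lF)^\oc\lf_i)_z=\S_i(\lf_i|\lF_z)_V\,\lf_i=\lF_z$ for all $z\in D$. Thus $\S_i(\lf_i|\lF)^\oc\lf_i$ and $\lF$ agree as holomorphic maps on $D$, which is the asserted representation.

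The only point that is not completely formal --- and the one I expect to be the main obstacle --- is the \emph{membership bookkeeping} in $\HL$: one must know that each constant $\lf_i$ lies in $\HL$ and that the (a priori unbounded) scalar multiplier $(\lf_i|\lF)$ acts on it so that the summand $(\lf_i|\lF)^\oc\lf_i$ remains inside $\HL$. This is exactly what makes the decomposition useful for the intertwining argument, where one applies an operator $A$ with $f^\oc A=A f^\oc$ term by term. I would settle it by appealing to the structure of $\HL$ as a reproducing-kernel Hilbert space carrying the module action: the constants are present, and each summand is the $\lf_i$-coordinate piece of $\lF$, whose finite sum is the given element $\lF\in\HL$. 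Granting this, the identity of holomorphic maps established above is automatically an identity in $\HL$, and the remaining checks (holomorphy of the coefficients, finiteness of the sum) are entirely routine.
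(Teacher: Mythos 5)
Your proof is correct and takes essentially the same approach as the paper: both reduce the claim to the pointwise orthonormal-basis expansion $\lF_z=\S_i\lf_i\,(\lf_i|\lF_z)_V$ in the finite-dimensional fibre $V$, recognizing the coefficients $z\mapsto(\lf_i|\lF_z)_V$ as holomorphic multipliers applied to the constant sections $\lf_i$ (the paper merely phrases this by first writing $\lF$ as a finite sum of elementary terms $f(z)\lf$ with $f\in\OL(D)$, $\lf\in V$). The membership bookkeeping you flag at the end is not addressed in the paper's proof either, which treats the identity purely at the pointwise level.
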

\begin{proof} Write $\lF$ as a sum of terms $\lF_z=f(z)\lf,$ where $f\in\OL(D)$ and $\lf\in V.$ Then
$$\lF_z=f(z)\lf=\S_i f(z)\lf_i(\lf_i|\lf)=\S_i\lf_i(\lf_i|f(z)\lf)=\S_i\lf_i(\lf_i|\lF_z).$$
\end{proof}
\begin{lemma} Define $A_z\in Hom(V,V^\1)$ by $A_z\lf:=(A\lf)_z$ for all $\lf\in V,\,z\in D.$ Then
$$(A\lF)_z=A_z\lF_z.$$
\end{lemma}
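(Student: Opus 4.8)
The plan is to reduce the statement to the module decomposition of the preceding lemma and then exploit the intertwining hypothesis $f^\oc A=A f^\oc$ to commute the scalar multiplier past $A$. First I would fix an orthonormal basis $\lf_i$ of $V$ and, identifying each $\lf_i$ with the corresponding constant map in $\HL$ (which is exactly the identification already implicit in the definition $A_z\lf:=(A\lf)_z$), invoke the previous lemma to write an arbitrary $\lF\in\HL$ as the finite sum $\lF=\S_i(\lf_i|\lF)^\oc\lf_i$, where $(\lf_i|\lF)^\oc$ is the multiplication operator induced by the holomorphic scalar function $z\mapsto(\lf_i|\lF_z)$.

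Next I would apply $A$ and use linearity together with the intertwining property. Since $(\lf_i|\lF)$ is a holomorphic multiplier on $D$, the hypothesis gives $A(\lf_i|\lF)^\oc=(\lf_i|\lF)^\oc A$, which lets me pull the multiplier out of $A$, yielding $A\lF=\S_i(\lf_i|\lF)^\oc A\lf_i$. Evaluating this identity at a point $z\in D$, the scalar multiplier acts pointwise through the value of its symbol, so that $(A\lF)_z=\S_i(\lf_i|\lF_z)(A\lf_i)_z$.

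Finally I would recognize $(A\lf_i)_z=A_z\lf_i$ directly from the definition of $A_z$, and combine this with the linearity of $A_z\in Hom(V,V^\1)$ and the orthonormal expansion $\lF_z=\S_i(\lf_i|\lF_z)\lf_i$ of the vector $\lF_z\in V$. This gives $(A\lF)_z=A_z\big(\S_i(\lf_i|\lF_z)\lf_i\big)=A_z\lF_z$, as asserted.

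The points requiring care are structural rather than computational: one needs the constant maps $\lf_i$ to lie in $\HL$, so that $A_z\lf_i$ is meaningful, and one needs the sum over $i$ to be finite so that $A$ may be interchanged with it term by term. Both hold in the present setting because the fibre $V$ is finite-dimensional, the relevant cases being $\PL^nZ$ and $\PL_\ll Z$; were $V$ infinite-dimensional, one would in addition have to invoke the continuity of $A$ to justify the interchange with the series. Since no such subtlety arises here, the only genuine ingredient is the commutation of $A$ with the scalar multipliers, and the result follows immediately.
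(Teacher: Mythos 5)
Your proof is correct and follows exactly the paper's own argument: decompose $\lF=\S_i(\lf_i|\lF)^\oc\lf_i$ via the preceding lemma, commute the scalar multipliers past $A$ using the intertwining hypothesis, evaluate at $z$, and reassemble using the linearity of $A_z$ and the orthonormal expansion of $\lF_z\in V$. The additional remarks on finiteness of the sum and on the constant maps lying in $\HL$ are sensible care that the paper leaves implicit.
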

\begin{proof} The intertwining property implies
$$A\lF=A\S_i(\lf_i|\lF)^\oc\lf_i=\S_i(\lf_i|\lF)^\oc A\lf_i$$
and hence
$$(A\lF)_z=\S_i(\lf_i|\lF_z)(A\lf_i)_z=\S_i(\lf_i|\lF_z)A_z\lf_i=A_z(\S_i(\lf_i|\lF_z)\lf_i)=A_z\lF_z.$$
\end{proof}
\begin{proposition}\label{b} Any intertwining operator $A$ satisfies
$$A^*\KL_w^\1=\KL_wA_w^*$$
for all $w\in D$ and the respective kernel functions.
\end{proposition}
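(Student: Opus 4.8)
The statement $A^*\KL_w^\1=\KL_w A_w^*$ is an identity between operators $V^\1\to\HL$, so the natural strategy is to test it against an arbitrary vector $\lF\in\HL$ using the reproducing kernel property. The plan is to compute the inner product $(\lF|A^*\KL_w^\1\lq^\1)_\HL$ for $\lq^\1\in V^\1$ and show it equals $(\lF|\KL_w A_w^*\lq^\1)_\HL$, which suffices since $\lF$ ranges over a total set.

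First I would move $A^*$ across the inner product to get $(\lF|A^*\KL_w^\1\lq^\1)_\HL=(A\lF|\KL_w^\1\lq^\1)_{\HL^\1}$. Now apply the reproducing kernel property in $\HL^\1$, which collapses the right-hand factor to the evaluation of $A\lF$ at $w$: this gives $((A\lF)_w|\lq^\1)_{V^\1}$. The key computational input is the previous lemma, which says $(A\lF)_w=A_w\lF_w$; substituting yields $(A_w\lF_w|\lq^\1)_{V^\1}$. Taking the adjoint of $A_w\in Hom(V,V^\1)$ turns this into $(\lF_w|A_w^*\lq^\1)_V$, an inner product over $V$.

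Finally I would recognize the last expression as a reproducing-kernel pairing in $\HL$: by the reproducing property $(\lF_w|\lh)_V=(\lF|\KL_w\lh)_\HL$ for any $\lh\in V$, applied to $\lh=A_w^*\lq^\1$, this equals $(\lF|\KL_w A_w^*\lq^\1)_\HL$. Comparing the two ends of the chain gives $(\lF|A^*\KL_w^\1\lq^\1)_\HL=(\lF|\KL_w A_w^*\lq^\1)_\HL$ for all $\lF\in\HL$, whence $A^*\KL_w^\1\lq^\1=\KL_w A_w^*\lq^\1$, and since $\lq^\1$ was arbitrary the operator identity follows.

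I do not expect a serious obstacle here: the argument is essentially a formal manipulation chaining together the two reproducing kernel properties (one in $\HL^\1$ to evaluate $A\lF$, one in $\HL$ to reconstruct the kernel vector) with the pointwise description $(A\lF)_w=A_w\lF_w$ from the preceding lemma. The only point requiring minor care is the correct handling of the conjugate-linearity conventions — the paper uses inner products conjugate-linear in the first slot — so the adjoint relation $(A_w\lF_w|\lq^\1)_{V^\1}=(\lF_w|A_w^*\lq^\1)_V$ must be applied with that convention in mind. Everything else is routine.
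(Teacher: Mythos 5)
Your proof is correct and follows essentially the same route as the paper: move $A^*$ across the inner product, apply the reproducing property in $\HL^\1$, invoke the pointwise identity $(A\lF)_w=A_w\lF_w$ from the preceding lemma, and then reconstruct the kernel vector in $\HL$. The only cosmetic difference is that you test against an arbitrary $\lF\in\HL$ and use the reproducing property directly, whereas the paper tests against kernel vectors $\KL_z\lf$ and finishes via $\KL_{z,w}=\KL_z^*\KL_w$; the substance is identical.
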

\begin{proof} Let $\lf\in V,\,\lq\in V^\1.$ Then
$$(\KL_z\lf|A^*\KL_w^\1\lq)=(A\KL_z\lf|\KL_w^\1\lq)=((A\KL_z\lf)_w|\lq)=(A_w(\KL_z\lf)_w|\lq)$$
$$=(A_w\KL_{w,z}\lf|\lq)=(\lf|\KL_{z,w}A_w^*\lq)=(\lf|\KL_z^*\KL_w A_w^*\lq)=(\KL_z\lf|\KL_wA_w^*\lq).$$
\end{proof}
As special cases we obtain
\begin{proposition} If $U$ is a unitary intertwiner, then
$$\KL_{z,w}^\1=U_z\KL_{z,w}U_w^*$$
where $U:D\to GL(V)$ is a holomorphic mapping. If $\HL=\HL^\1$ and $P$ is a self-adjoint intertwiner, then
\be{27}P_z\KL_{z,w}=\KL_{z,w}P_w^*.\ee
\end{proposition}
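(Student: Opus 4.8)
The plan is to derive both identities directly from Proposition \er{b}, together with the evaluation formula $(A\lF)_z=A_z\lF_z$ established above. Writing evaluation as $\lF_z=\KL_z^*\lF$, that formula reads $(\KL_z^\1)^*A=A_z\KL_z^*$ as operators $\HL\to V,$ and Proposition \er{b} supplies the companion identity $A^*\KL_w^\1=\KL_wA_w^*.$ Recalling $\KL_{z,w}=\KL_z^*\KL_w,$ both assertions will reduce to bookkeeping of adjoints.

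First I would treat the unitary intertwiner $U,$ where $V^\1=V.$ Since $U^*=U^{-1},$ left-multiplying the identity $U^*\KL_w^\1=\KL_wU_w^*$ of Proposition \er{b} by $U$ gives $\KL_w^\1=U\KL_wU_w^*;$ taking adjoints of the same identity and right-multiplying by $U^*=U^{-1}$ yields $(\KL_z^\1)^*=U_z\KL_z^*U^*.$ Multiplying these,
$$\KL_{z,w}^\1=(\KL_z^\1)^*\KL_w^\1=U_z\KL_z^*U^*U\KL_wU_w^*=U_z\KL_z^*\KL_wU_w^*=U_z\KL_{z,w}U_w^*,$$
which is the claim. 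It remains to verify that $z\mapsto U_z$ is a holomorphic map into $GL(V).$ Holomorphy is immediate, since $z\mapsto U_z\lf=(U\lf)_z$ is a holomorphic $V$-valued map for each $\lf.$ For invertibility I would note that $U^{-1}$ is again a unitary intertwiner, apply the composition rule $(AB)_z=A_zB_z$ (a direct consequence of $(A\lF)_z=A_z\lF_z$) to $U^{-1}U=\mathrm{id}$ and $UU^{-1}=\mathrm{id},$ and use the totality of the evaluations $\{\lF_z\}$ to conclude $(U^{-1})_zU_z=\mathrm{id}_V=U_z(U^{-1})_z.$

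For the self-adjoint intertwiner $P$ on $\HL=\HL^\1$ the computation is even shorter. Here $P^*=P,$ so Proposition \er{b} reads $P\KL_w=\KL_wP_w^*.$ Left-multiplying by $\KL_z^*$ and invoking the evaluation identity $\KL_z^*P=P_z\KL_z^*$ on the left-hand side gives
$$P_z\KL_{z,w}=P_z\KL_z^*\KL_w=\KL_z^*P\KL_w=\KL_z^*\KL_wP_w^*=\KL_{z,w}P_w^*,$$
which is exactly \er{27}.

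Since both assertions are formal corollaries of Proposition \er{b}, I do not anticipate a genuine obstacle. The only point requiring care is the unitary case, where one must confirm that the fibre operators $U_z$ land in $GL(V)$ rather than merely $\mathrm{End}(V);$ this is where the composition rule for the operators $A_z$ and the totality of the evaluation functionals are needed, and it is the single step that goes beyond manipulating the two basic identities.
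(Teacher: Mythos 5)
Your proof is correct and follows essentially the same route as the paper: both assertions are deduced from Proposition \er{b} combined with the evaluation lemma $(A\lF)_z=A_z\lF_z$ (which you encode as $(\KL_z^\1)^*A=A_z\KL_z^*$), the only difference being that you manipulate operator identities while the paper evaluates pointwise. Your additional verification that $U_z$ is holomorphic and lands in $GL(V)$ is a welcome detail the paper leaves implicit.
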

\begin{proof} Since $U^*\KL_w^\1=\KL_w U_w^*$ by Proposition \er{b} we obtain $\KL_w^\1\lq=U\KL_wU_w^*\lq$ for all $\lq\in V^\1.$
It follows that
$$\KL_{z,w}^\1\lq=(\KL_w^\1\lq)_z=(U\KL_wV_w^*\lq)_z=U_z(\KL_wU_w^*\lq)_z=U_z\KL_{z,w}U_w^*\lq.$$
In the self-adjoint case Proposition \er{b} implies $P\KL_w=\KL_wP_w^*$ and we obtain 
$$\KL_{z,w}P_w^*\lf=(\KL_wP_w^*\lf)_z=(P\KL_w\lf)_z=P_z(\KL_w\lf)_z=P_z\KL_{z,w}\lf$$
for all $\lf\in V.$
\end{proof}
\begin{lemma}\label{i} Let $P$ be a self-adjoint intertwiner. Then we have 
$$P_0^*\KL_{z,0}^{-1}\KL_{z,w}\KL_{0,w}^{-1}=\KL_{z,0}^{-1}\KL_{z,w}\KL_{0,w}^{-1}P_0$$
\end{lemma}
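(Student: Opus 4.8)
The plan is to derive everything from the self-adjoint intertwining relation \er{27}, namely $P_z\KL_{z,w}=\KL_{z,w}P_w^*$, valid for all $z,w\in D$. The statement to be proved asserts that $P_0^*$, entering on the left of the operator $X:=\KL_{z,0}^{-1}\KL_{z,w}\KL_{0,w}^{-1}$, re-emerges as $P_0$ on the right. So the natural strategy is to transport the factor $P_0^*$ across the three pieces of $X$ one at a time, using \er{27} specialized at the appropriate base point.

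First I would set $w=0$ in \er{27} to get $P_z\KL_{z,0}=\KL_{z,0}P_0^*$, which rearranges to $P_0^*\KL_{z,0}^{-1}=\KL_{z,0}^{-1}P_z$. Applying this to the leftmost factor converts $P_0^*X$ into $\KL_{z,0}^{-1}P_z\KL_{z,w}\KL_{0,w}^{-1}$. Next I would invoke the general relation \er{27} itself, $P_z\KL_{z,w}=\KL_{z,w}P_w^*$, to move $P_z$ past the middle factor, yielding $\KL_{z,0}^{-1}\KL_{z,w}P_w^*\KL_{0,w}^{-1}$. Finally I would set $z=0$ in \er{27} to obtain $P_0\KL_{0,w}=\KL_{0,w}P_w^*$, equivalently $P_w^*\KL_{0,w}^{-1}=\KL_{0,w}^{-1}P_0$, which moves $P_w^*$ through the last term and leaves precisely $\KL_{z,0}^{-1}\KL_{z,w}\KL_{0,w}^{-1}P_0=XP_0$, as desired.

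The computation is purely algebraic and presents no genuine obstacle once \er{27} is in hand; the only point deserving attention is that the off-diagonal kernels $\KL_{z,0}$ and $\KL_{0,w}$ are being inverted, so I would note at the outset that these inverses are assumed to exist (as is implicit in the statement, and paralleling the earlier invertibility hypothesis on $\KL_{w,w}$). The conceptual content is simply that the three specializations $w=0$, the general pair $(z,w)$, and $z=0$ of the single covariance identity \er{27} chain together, so that each passage of $P$ across a kernel factor is accounted for by exactly one instance of that relation.
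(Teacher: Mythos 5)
Your proof is correct and is essentially the paper's own argument: the paper likewise specializes \er{27} at $w=0$ and at $z=0$ (to express $P_z=\KL_{z,0}P_0^*\KL_{z,0}^{-1}$ and $P_w^*=\KL_{0,w}^{-1}P_0\KL_{0,w}$) and then substitutes into the general relation $P_z\KL_{z,w}=\KL_{z,w}P_w^*$, which is exactly your chain of three transport steps written in a different order.
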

\begin{proof} Putting $w=0$ in \er{27} we obtain $P_z\KL_{z,0}=\KL_{z,0}P_0^*.$ Therefore $P_z=\KL_{z,0}P_0^*\KL_{z,0}^{-1}$
and hence $P_w^*=\KL_{0,w}^{-1}P_0 \KL_{0,w}.$ With \er{27} it follows that
$$\KL_{z,0}P_0^*\KL_{z,0}^{-1}\KL_{z,w}=\KL_{z,w}\KL_{0,w}^{-1}P_0 \KL_{0,w}.$$
\end{proof}

For the 'big' Hilbert space, the cocycle \er{25} yields
$$\JL_{x,x}=B_{x,x}^{\lp_n/2}\,\o\tL_x^{\lp_n}\,\KL_{0,0}^\ln\,(\o\tL_x^{\lp_n})^*\,B_{x,x}^{\lp_n/2},$$
with the adjoint depending on the choice of $K$-invariant inner product on $\PL_\cl^nZ.$ Polarizing we obtain 
\be{39}\JL_{z,w}=B_{z,w}^{\lp_n/2}\,\o\tL_w^{\lp_n}\,\KL_{0,0}^\ln\,(\o\tL_z^{\lp_n})^*\,B_{z,w}^{\lp_n/2}\qquad\forall\,z,w\in D.\ee
\begin{theorem} The $C^*$-algebra generated by polynomial multipliers, acting on the 'big' Hilbert space $H^2_\ln(D,\PL_\cl^nZ),$ is irreducible.
\end{theorem}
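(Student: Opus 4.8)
The plan is to show that the commutant of the multiplier $C^*$-algebra reduces to the scalars. Since a $C^*$-algebra acts irreducibly precisely when its commutant is trivial, and the commutant is spanned by its self-adjoint elements, it suffices to prove that every bounded self-adjoint $P$ commuting with all multiplications $f^\oc$ (hence with their adjoints) is a scalar. Such a $P$ is a self-adjoint intertwiner, so \er{27} gives $P_z\,\KL_{z,w}=\KL_{z,w}\,P_w^*$ and Lemma \er{i} gives $P_0^*\,N_{z,w}=N_{z,w}\,P_0$ for the normalized kernel $N_{z,w}:=\KL_{z,0}^{-1}\KL_{z,w}\KL_{0,w}^{-1}$. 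Evaluating at $z=w=0$ yields $N_{0,0}=\KL_{0,0}^{-1}$, whence $P_0^*=\KL_{0,0}^{-1}P_0\KL_{0,0}$; substituting this back turns the intertwining relation into genuine \emph{commutation}
$$P_0\,\h N_{z,w}=\h N_{z,w}\,P_0,\qquad \h N_{z,w}:=\KL_{0,0}\,\KL_{z,0}^{-1}\KL_{z,w}\KL_{0,w}^{-1}\qquad\forall\,z,w\in D.$$
Moreover $P_z=\KL_{z,0}\,P_0^*\,\KL_{z,0}^{-1}$, so once $P_0$ is known to be a scalar $t\cdot{\rm Id}$, holomorphy forces $P_z=t\cdot{\rm Id}$ for all $z$, hence $P=t\cdot{\rm Id}$. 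Thus everything reduces to showing that the only operator commuting with all $\h N_{z,w}$ is scalar.

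Next I would insert the explicit kernel. Since the quasi-determinant reduces to $1$ when either argument vanishes, the factors $\lD_{z,0}=\lD_{0,w}=1$ drop out, and using \er{38}, \er{39} together with $B_{z,0}=B_{0,w}=I$ and $\o\tL_0=I$ one finds
$$N_{z,w}=\big((\o\tL_z^{\lp_n})^*\big)^{-1}\KL_{0,0}^{-1}\,\lD_{z,w}^{-\ln}\,B_{z,w}^{\lp_n/2}\,\o\tL_w^{\lp_n}\,\KL_{0,0}\,(\o\tL_z^{\lp_n})^*\,B_{z,w}^{\lp_n/2}\,\KL_{0,0}^{-1}(\o\tL_w^{\lp_n})^{-1},$$
the scalar factor $\lD_{z,w}^{-\ln}$ being central. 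The three building blocks are exactly those analyzed earlier: the operators $B_{z,w}^{\lp_n}\in\lp_n(K^\Cl)$, which are block-diagonal for the Peter-Weyl grading $\PL^nZ=\S_{\ll\le n}\PL_\ll Z$; the unipotent factors $\o\tL_w^{\lp_n}=\exp((Q_\lz w)^{\d\lp_n})$, whose first-order term is, by Theorem \er{d}, the shift with nonzero entries $\PL_\ll Z\to\PL_{\ll+\Le_i}Z$; and the block-scalar operator $\KL_{0,0}$ (it is $K$-invariant, hence block-scalar by Lemma \er{e}). Consequently $P_0$ commutes with every Taylor coefficient of $\h N_{z,w}$ at the origin.

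The heart of the matter is to read off the generators from these coefficients. Differentiating in $z$ and $\o w$ and separating block-degrees, the block-diagonal part of the bidegree-$(1,1)$ coefficient recovers the infinitesimal action $\lp_n(\kL^\Cl)$ (coming from $B_{z,w}=I-2z\,\Box\,w^*+\dots$), while the block-changing part recovers the shift operators of Theorem \er{d} and their adjoints. Since the summands $\PL_\ll Z$ are pairwise inequivalent irreducible $K^\Cl$-modules, commuting with $\lp_n(K^\Cl)$ forces $P_0$ to act as a scalar $t_\ll$ on each block; equivalently, every invariant subspace is a sum of whole blocks. Commuting in addition with the shift $\PL_\ll Z\to\PL_{\ll+\Le_i}Z$, which is nonzero by \cite{U1} and the formula $S(w)^\ll_{\ll+\Le_i}=n\,M^\ll_i(w)-D^\ll_i(w)$ of Theorem \er{d}, yields $t_\ll=t_{\ll+\Le_i}$ whenever $\ll+\Le_i\le n$. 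As the poset of partitions $\ll\le n$ is connected under the moves $\ll\leftrightarrow\ll\pm\Le_i$, all the $t_\ll$ coincide, so $P_0=t\cdot{\rm Id}$ and therefore $P=t\cdot{\rm Id}$.

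The main obstacle is precisely this extraction step: disentangling the bidegree-$(1,1)$ (and, where needed, higher) coefficients of $\h N_{z,w}$ into their block-diagonal and block-changing parts, and verifying that the block-changing parts are genuinely the nonvanishing shifts of Theorem \er{d} rather than accidentally cancelling combinations. Equivalently, one must show that the $*$-algebra generated by $\{\h N_{z,w}\}$ already contains $\lp_n(K^\Cl)$ together with one nonzero raising operator for each admissible pair $(\ll,\ll+\Le_i)$; granting this, the abstract irreducibility follows from the multiplicity-freeness of the Peter-Weyl decomposition and the connectedness of the partition poset, exactly as above.
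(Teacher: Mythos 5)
Your reduction is correct and coincides with the paper's own starting point: passing from a self-adjoint intertwiner $P$ to the commutation of $P_0$ with $\h N_{z,w}=\KL_{0,0}\KL_{z,0}^{-1}\KL_{z,w}\KL_{0,w}^{-1}$ is exactly Lemma \er{i} combined with the factorization \er{38}--\er{39} (your substitution $P_0^*=\KL_{0,0}^{-1}P_0\KL_{0,0}$ is a legitimate replacement for the paper's $K$-averaging at this stage), and your endgame (block-scalarity from multiplicity-freeness, propagation of the scalars along non-vanishing shifts, connectedness of the partition poset) would indeed finish the argument. The gap is the middle step, which you yourself call ``the main obstacle'' and do not carry out; moreover, the one concrete assertion you make about it is false. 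Put $T_w=(Q_\lz w)^{\d\lp_n}$, $T_z^*=((Q_\lz z)^{\d\lp_n})^*$, let $C_{z,w}$ denote the $\d\lp_n$-action of $-z\Box w^*\in\kL^\Cl$, so that $B_{z,w}^{\lp_n/2}=I+C_{z,w}+\cdots$, and set $L=\KL_{0,0}$. Since $\h N_{z,0}=\h N_{0,w}=I$, the pure bidegrees vanish and the first non-trivial Taylor coefficient of $\h N_{z,w}$ is the bidegree-$(1,1)$ one; expanding your product formula gives
$$\ln(z|w)\,I+C_{z,w}+L\,C_{z,w}\,L^{-1}+[T_w,\,L\,T_z^*\,L^{-1}].$$
Every term here preserves the grading of $\PL^nZ$ by polynomial degree: $C_{z,w}$ and $L$ have degree zero, while in the commutator the raising operator $T_w$ and the lowering operator $LT_z^*L^{-1}$ occur paired. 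In fact a simple count shows that the bidegree-$(a,b)$ coefficient of $\h N_{z,w}$ is always homogeneous of degree $b-a$, because the factors $B_{z,w}^{\lp_n/2}$, $\lD_{z,w}^{-\ln}$, $L$ contribute only diagonal bidegrees $(c,c)$ of degree zero, while $T_z^*$ and $T_w$ contribute $(1,0)$ of degree $-1$ and $(0,1)$ of degree $+1$, respectively. Hence the $(1,1)$ coefficient has no degree-$(\pm1)$ component whatsoever and cannot ``recover the shift operators of Theorem \er{d} and their adjoints''; it does not even yield $\d\lp_n(\kL^\Cl)$ by itself, since $C_{z,w}$ cannot be separated from the commutator term by any degree consideration. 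The shifts first enter the bidegree-$(2,1)$ and $(1,2)$ coefficients, conjugated by $L$ and mixed with terms such as $\ln(z|w)\,LT_z^*L^{-1}$, and disentangling them there is precisely the unresolved difficulty: your proposal defers, rather than proves, its crucial point.

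It is instructive to compare how the paper escapes exactly this entanglement, since its mechanism is genuinely different from Taylor expansion at the origin. The commutation relation is first extended sesqui-holomorphically to all quasi-invertible pairs $(z,w)$ (Claim \er{c}), then specialized to $z=w=rc$ with $c$ a tripotent and $r>1$, i.e. outside $D\xx D$. After conjugating by $L^{1/2}$ and a central element $s^{-\lp_n}$, the resulting operator becomes the square of a positive operator, and taking positive square roots shows that $P_0$ commutes with $(\o\tL_{tc}^{\lp_n})^*L^{-1}\o\tL_{tc}^{\lp_n}$ for all real $t$. Differentiating at $t=0$ gives commutation with $((Q_\lz c)^{\d\lp_n})^*L^{-1}+L^{-1}(Q_\lz c)^{\d\lp_n}$, and only now does triangularity enter: because $P_0$ is block-diagonal (the paper secures this at the outset by averaging $P$ over $K$; in your scheme this too would have to come out of the gapped extraction) and the two summands are strictly lower- resp. strictly upper-triangular by Theorem \er{d}, $P_0$ commutes with each summand separately, hence with $(Q_\lz c)^{\d\lp_n}$ for every tripotent $c$, hence for all $c\in Z$ by the Jordan spectral theorem, hence with the irreducible action of $K^\Cl P_+$ on $\PL^nZ$. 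To complete your proposal you would need a substitute for this entire chain --- analytic continuation to tripotent rays, the square-root trick, and the triangular splitting.
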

\begin{proof} Let $P$ be a self-adjoint projection on $H^2_\ln(D,\PL_\cl^nZ)$ commuting with polynomial multipliers. Replacing $P$ by
\be{30}\I_{K}dk\,k^{\lp_n}\,P\,k^{-\lp_n}\ee
we may assume that $P$ is $K$-invariant. Then the idempotent endomorphism $P_0$ on $\PL^nZ$ is also $K$-invariant. Since $K$ acts without multiplicity it follows that $P_0$ is unitarily equivalent to a diagonal matrix with eigenvalues $0,1.$ Thus $P_0=P_0^*.$ 
We put $L:=\KL_{0,0}^\ln$ and continue with the following two claims:
\begin{claim}\label{c} $P_0$ commutes with all operators of the form
\be{28}(\o\tL_{-z^w}^{\lp_n})^*\,L^{-1}\,\o\tL_w^{\lp_n}\,L\,(\o\tL_z^{\lp_n})^*\,L^{-1}\,\o\tL_{-w^z}^{\lp_n},\ee
where $(z,w)\in Z\xx Z$ is quasi-invertible.
\end{claim}
\begin{proof} Applying \er{29} to $h=B_{z,w}^{1/2}$ we obtain
$$B_{z,w}^{1/2}\o\tL_{-w}=\o\tL_{-B_{w,z}^{-1/2}w}\,B_{z,w}^{1/2}=\o\tL_{-w^z}\,B_{z,w}^{1/2}.$$ 
Suppose first that $z,w\in D.$ Since $P_0=P_0^*$ and $\KL_{z,w}$ and $\JL_{z,w}$ differ only by a scalar factor, Lemma \er{i} implies that $P_0$ commutes with 
$$\JL_{z,0}^{-1}\JL_{z,w}\JL_{0,w}^{-1}=(\o\tL_{-z}^{\lp_n})^*\,L^{-1}\,B_{z,w}^{\lp_n/2}\,\o\tL_w^{\lp_n}\,L\,(\o\tL_z^{\lp_n})^*
\,B_{z,w}^{\lp_n/2}\,L^{-1}\,\o\tL_{-w}^{\lp_n}$$
$$=(\o\tL_{-z}^{\lp_n})^*\,B_{z,w}^{\lp_n/2}\,L^{-1}\,\o\tL_w^{\lp_n}\,L\,(\o\tL_z^{\lp_n})^*
\,L^{-1}\,B_{z,w}^{\lp_n/2}\,\o\tL_{-w}^{\lp_n}
=(B_{w,z}^{\lp_n/2}\,\o\tL_{-z}^{\lp_n})^*\,L^{-1}\,\o\tL_w^{\lp_n}\,L\,(\o\tL_z^{\lp_n})^*
\,L^{-1}\,B_{z,w}^{\lp_n/2}\,\o\tL_{-w}^{\lp_n}$$
$$=(\o\tL_{-z^w}\,B_{w,z}^{1/2})^*\,L^{-1}\,\o\tL_w^{\lp_n}\,L\,(\o\tL_z^{\lp_n})^*
\,L^{-1}\,\o\tL_{-w^z}\,B_{z,w}^{1/2}
=B_{z,w}^{\lp_n/2}\,(\o\tL_{-z^w}^{\lp_n})^*\,L^{-1}\,\o\tL_w^{\lp_n}\,L\,(\o\tL_z^{\lp_n})^*\,L^{-1}\,\o\tL_{-w^z}^{\lp_n}\,B_{z,w}^{\lp_n/2}.$$
Since $P_0$ commutes with the endomorphisms $B_{z,w}^{\lp_n/2}\in K^\Cl,$ the assertion follows for $z,w\in D.$ Since \er{28} is sesqui-holomorphic in $(z,w),$ the assertion follows in general.
\end{proof}
The center $\Cl^\xx\ic K^\Cl$ acts on $\PL^n Z$ via
$$(s^{-\lp_n}\lq)(\lz)=s^{-nd/p}\,\lq(s\lz)\qquad\forall\,s\in\Cl^\xx.$$ 
As a special case of \er{29} we have
$$s^{-\lp_n}\o\tL_z^{\lp_n}\,s^{\lp_n}=\o\tL_{\o sz}^{\lp_n}.$$
An element $c\in Z$ is called a {\bf tripotent} if $Q_cc=c.$
\begin{claim} Let $c\in Z$ be a tripotent. Then $P_0$ commutes with all operators $(\o\tL_{tc}^{\lp_n})^*\,L^{-1}\,\o\tL_{tc}^{\lp_n},$ where $t\in\Rl.$
\end{claim}
\begin{proof} Let $r\in\Rl,\,r>1,$ and put $z=w=rc.$ Then
$$z^w=\f r{1-r^2}c=-s^2z,$$
where $s=\f1{\F{r^2-1}}.$ By Lemma \er{c} $P_0$ commutes with the operators $(\o\tL_{s^2z}^{\lp_n})^*\,L^{-1}\,\o\tL_z^{\lp_n}\,L\,(\o\tL_z^{\lp_n})^*\,L^{-1}\,\o\tL_{s^2z}^{\lp_n}$ and therefore also with the operators
$$L^{1/2}\,s^{-2\lp_n}\,(\o\tL_{s^2z}^{\lp_n})^*\,L^{-1}\,\o\tL_z^{\lp_n}\,L\,(\o\tL_z^{\lp_n})^*\,L^{-1}\,\o\tL_{s^2z}^{\lp_n}
\,s^{-2\lp_n}\,L^{1/2}=\(L^{1/2}\,s^{-\lp_n}(\o\tL_{sz}^{\lp_n})^*\,L^{-1}\,\o\tL_{sz}^{\lp_n}\,s^{-\lp_n}L^{1/2}\)^2.$$
Taking the positive definite square-root, it follows that $P_0$ commutes with $L^{1/2}\,s^{-\lp_n}(\o\tL_{sz}^{\lp_n})^*\,L^{-1}\,\o\tL_{sz}^{\lp_n}\,s^{-\lp_n}L^{1/2}$ and hence also with 
$$(\o\tL_{sz}^{\lp_n})^*\,L^{-1}\,\o\tL_{sz}^{\lp_n}=(\o\tL_{tc}^{\lp_n})^*\,L^{-1}\,\o\tL_{tc}^{\lp_n},$$ 
where
$$sz=\f{r}{\F{r^2-1}}c=tc$$
is any multiple of $c$ with $t>1.$ By analytic continuation, the assertion follows.
\end{proof}
We will now complete the proof of the Theorem. Since $\o\tL_{tc}^{\lp_n}=\exp(t\,Q_\lz c)^{\d\lp_n},$ taking the derivative at $t=0$ implies that $P_0$ commutes with
$$((Q_\lz c)^{\d\lp_n})^*L^{-1}+L^{-1}(Q_\lz c)^{\d\lp_n}.$$
According to Theorem \ref{d} $(Q_\lz c)^{\d\lp_n}$ is a strictly upper triangular operator, whereas $((Q_\lz c)^{\d\lp_n})^*$ is strictly lower triangular. Hence $P_0$ commutes with $((Q_\lz c)^{\d\lp_n})^*L^{-1}$ and $L^{-1}(Q_\lz c)^{\d\lp_n}$ separately. In particular, $P_0$ commutes with $(Q_\lz c)^{\d\lp_n}$ for any tripotent $c.$ By the spectral theorem for Jordan triples \cite{FK,L}, $P_0$ commutes with $(Q_\lz c)^{\d\lp_n}$ for all $c\in Z.$ This implies that $P_0$ commutes with the action of $K^\Cl\,P_+.$ Since this group acts transitively on $\PL^n Z$ it follows that $P_0=0$ or $P_0=1,$ for the $K$-invariant integral \er{30}. This implies that the original $P_0$ is also trivial.   
\end{proof}
Note that the above proof uses mainly the facts that the typical fibre $V=\PL^nZ$ carries an irreducible holomorphic representation of 
$K^\Cl\,P_+$ which is multiplicity-free under restriction to $K.$

\end{document}